\newdimen\AAdi%
\newbox\AAbo%
\def\AAk#1#2{\s_etbox\AAbo=\hbox{#2}\AAdi=\wd\AAbo\kern#1\AAdi{}}%
\def\AAr#1#2#3{\s_etbox\AAbo=\hbox{#2}\AAdi=\ht\AAbo\raise#1\AAdi\hbox{#3}}%
\font\tenmsb=msbm10 at 12pt \font\sevenmsb=msbm7 at 8pt
\font\fivemsb=msbm5 at 6pt
\newtheorem{theorem}{Theorem}
\newtheorem{remark}[theorem]{Remark}
\newtheorem{corollary}[theorem]{Corollary}
\newtheorem{lemma}[theorem]{Lemma}
\numberwithin{equation}{section} \numberwithin{theorem}{section}
\renewcommand{\topmargin}{0cm}
\renewcommand{\oddsidemargin}{5mm}
\renewcommand{\evensidemargin}{5mm}
\renewcommand{\textwidth}{150mm}
\renewcommand{\textheight}{230mm}
\def\R{\mathbb R}
\def\S{\mathbb S}
\def\na{\nabla}
\def\bn{\overline\nabla}
\def\ir#1{\mathbb R^{#1}}
\def\f#1#2{\frac{#1}{#2}}
\def\a{\alpha}
\def\be{\beta}
\def\r{\Re_{I\!V}}
\def\p#1{\partial #1}
\def\de{\delta}
\def\De{\Delta}
\def\e{\eta}
\def\ep{\epsilon}
\def\G{\Gamma}
\def\g{\gamma}
\def\k{\kappa}
\def\la{\lambda}
\def\lan{\langle}
\def\ran{\rangle}
\def\Om{\Omega}
\def\th{\theta}
\def\si{\sigma}
\def\Si{\Sigma}
\def\r{\rho}
\def\z{\zeta}
\def\div{\mathrm{div}}
\begin{document}

\title
{Minimal graphic functions on manifolds of non-negative Ricci curvature}
\author{Qi Ding}
\address{Max Planck Institute for Mathematics in the Sciences, Inselstr. 22, 04103 Leipzig, Germany}
\email{qiding@mis.mpg.de}\email{09110180013@fudan.edu.cn}
\author{J. Jost}
\address{Max Planck Institute for Mathematics in the Sciences, Inselstr. 22, 04103 Leipzig, Germany}
\email{jost@mis.mpg.de}
\author{Y.L. Xin}
\address{Institute of Mathematics, Fudan University, Shanghai 200433, China}
\email{ylxin@fudan.edu.cn}

\thanks{The second author is supported by the ERC Advanced Grant
  FP7-267087. The  third  author are supported partially by NSFC. He is also grateful to the Max Planck
Institute for Mathematics in the Sciences in Leipzig for its
hospitality and  continuous support. }
\date{}
\begin{abstract}
We study minimal graphic functions on   complete Riemannian manifolds
$\Si$ with non-negative Ricci curvature, Euclidean volume growth and
quadratic curvature decay. We derive global bounds for the gradients
for minimal graphic functions  of linear growth only on one side.
Then we can obtain a Liouville type theorem with such growth via splitting for tangent cones of $\Si$ at infinity.
When, in contrast, we do not impose  any growth restrictions for minimal
graphic functions, we also obtain a Liouville type theorem under a certain
non-radial Ricci curvature decay  condition on $\Si$. In particular, the borderline for the Ricci curvature decay is sharp by our example in the last section.
\end{abstract}

\keywords{Minimal graphic function, minimal surface equation,
  non-negative Ricci curvature,  gradient estimate, linear growth, Liouville theorem, tangent cone at infinity}
\maketitle

\section{Introduction}
The minimal surface equation on a Euclidean
space,
\begin{equation}\label{u00}
\div \left(\f{Du}{\sqrt{1+|Du|^2}}\right)=0
\end{equation}
has been investigated extensively by many mathematicians. Let
us recall some famous results that constitute a background for our present work.  In 1961, J. Moser \cite{M} derived
Harnack inequalities for uniformly elliptic equations that imply
weak Bernstein results for  minimal graphs in any dimension.  In 1969,
Bombieri-De Giorgi-Miranda \cite{BDM} showed interior gradient
estimates for solutions to the minimal surface equation (see also the
exposition in chapter 16 of \cite{GT});    the two-dimensional case
had already been obtained by Finn \cite{F} in 1954.

In this paper, we study the non-linear partial differential equation
describing minimal graphs over complete Riemannian manifolds of
non-negative Ricci curvature. Formally, the equation for a minimal
graph over a Riemannian manifold  $\Si$ is the same as for
Euclidean
space,
\begin{equation}\label{u0}
\div_\Si\left(\f{Du}{\sqrt{1+|Du|^2}}\right)=0,
\end{equation}
where the divergence operator and the norm now are defined in terms of the Riemannian metric of $\Si$.

The geometric content of  (\ref{u0}) is that a  solution $u$ is
the height function in the product manifold
$N=\Si\times \ir{}$ of a minimal graph $M$ in $N$. We therefore call a
solution to (\ref{u0}) a {\it a minimal graphic function}.

Of course, the Riemannian equation \eqref{u0} is more difficult than
its Euclidean version \eqref{u00}. In order to obtain strong results,
one needs to restrict the class of underlying Riemannian
manifolds. The linear analogue, the equation for harmonic functions on
Riemannian manifolds, suggests that non-negative Ricci curvature
should be a good geometric condition. In fact, harmonic functions on complete manifolds with non-negative Ricci
curvature have been very successfully  studied by S. T. Yau \cite{Y}, Colding-Minicozzi
\cite{CM2}, P. Li \cite{Li} and many others. Our problem can be
considered as  a non-linear generalization of harmonic functions on complete manifolds with non-negative Ricci curvature.

More precisely, we consider   complete non-compact $n-$dimensional
Riemannian manifolds $\Si$  satisfying the three  conditions:

\noindent C1) Non-negative Ricci curvature;\\
\noindent C2) Euclidean volume growth;\\
\noindent C3) Quadratic decay of the curvature tensor.

Fischer-Colbrie and Schoen \cite{FS} studied stable minimal surfaces in 3-dimensional manifolds with nonnegative scalar curvature, and showed their rigidity.
In our companion paper \cite{DJX},  we study minimal hypersurfaces in $\Si$ and obtain existence and non-existence  results for area-minimizing
hypersurfaces in such an $\Si$. Here, we restrict the dimension $n+1\ge4$ for ambient product manifolds and investigate minimal graphs from the PDE
point of view.

 Cheeger and Colding \cite{CCo1,CCo2,CCo3} studied the structure of  pointed Gromov-Hausdorff limits  of sequences $\{M_i^n,p_i)\}$ of complete connected Riemannian manifolds with $Ric_{M_i^n}\ge-(n-1)$. They showed that the singular set $\mathcal{S}$ of such a space has dimension no bigger than $n-2$. Subsequently, Cheeger-Colding-Tian \cite{CCT}  showed the stronger statement that $\mathcal{S}$ has dimension no bigger than $n-4$ under some additional assumption. We should point out that conditions C1) --C3) still permit some
nasty behavior of the manifold $\Si$. For instance, as G. Perelman pointed out, the tangent cones at infinity need not be unique.
Our conditions C2) and C3) also have appeared in the investigation of the uniqueness of tangent cones at infinity for Ricci flat manifolds by Cheeger and Tian \cite{CT}. This theory has recently been further developed by Colding and Minicozzi \cite{CM3}.

In the last decade, minimal graphs in product manifolds received
considerable attention. Concerning gradient estimates, J. Spruck \cite{Sp} obtained interior gradient estimates
via the  maximum principle. He went on to apply them  to the Dirichlet problem for constant mean curvature graphs.
Recently,  Rosenberg-Schulze-Spruck \cite{RSS} obtained a new gradient estimate and then showed that there is no trivial positive entire minimal graph over any manifold with nonnegative Ricci curvature and curvature bounded below.

For a complete  manifold $\Si$ with conditions C1), C2) and C3) we
obtain the  gradient estimates for minimal graphic functions by
integral methods, see Theorem \ref{GEu}. Such results in Euclidean
space were given by \cite{BDM},  \cite{BG}. Our results and methods
are different from those in \cite{Sp}.

Let us now describe our results in more precise PDE terms.
Theorem \ref{GEu} enables us to obtain  global bounds for gradients
when the growth for the minimal graphic
functions is linearly constrained only  on one side. So linear growth is equivalent to linear growth on one side for minimal graphic functions.
Since the Sobolev inequality and the Neumann-Poincar$\acute{\mathrm{e}}$ inequality both
hold on $\Si$, and thus  also  on the minimal graph $M$ in
$\Si\times\R$ represented by a minimal graphic function with bounded
gradient, this will then yield mean value inequalities for subharmonic
functions on the domains of $M$. Hence, we have the mean value equalities both in the
extrinsic balls or the intrinsic balls of $M$. Therefore, we obtain a
Liouville type theorem for minimal graphic functions with sub-linear
growth on one side, see Theorem \ref{Liusub}. It is interesting to
compare this Liouville  type theorem with the half-space theorem of Rosenberg-Schulze-Spruck \cite{RSS}.
The corresponding result for harmonic functions on manifolds with non-negative Ricci  curvature is due to S. Y. Cheng \cite{Ch}.
Furthermore, we can relax sub-linear growth to linear growth in the above Liouville type theorem if $\Si$ is not Euclidean space, and obtain the following theorem.
\begin{theorem}\label{IntrL}
Let $u$ be an entire solution to \eqref{u} on a complete Riemannian manifold $\Si$ with conditions C1), C2), C3).
If $u$ has at most linear growth on one side, then $u$ must be a constant unless $\Si$ is isometric to Euclidean space.
\end{theorem}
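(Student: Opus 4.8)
\smallskip

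\noindent The plan is to suppose that $\Si$ is not isometric to Euclidean space and that $u$ is a non-constant entire solution with linear growth on one side, and to derive a contradiction. The first reduction is soft. By Theorem \ref{GEu} the one-sided linear bound forces $\sup_\Si|\na u|=:C_0<\infty$, so $u$ has two-sided linear growth; moreover the graph $M=\{(x,u(x)):x\in\Si\}\subset\Si\times\R$ is, with the induced metric, bi-Lipschitz to $\Si$, hence has Euclidean volume growth and carries the Sobolev and Neumann--Poincar\'e inequalities together with the mean value inequality for subharmonic functions on intrinsic balls (all of which enter the proof of Theorem \ref{Liusub}). Since the fibre coordinate is affine on the product $\Si\times\R$, the function $u=x_{n+1}|_M$ is harmonic on the minimal submanifold $M$, of linear growth; and since the unit normal $N=(1+|\na u|^2)^{-1/2}(\partial_{n+1}-\na u)$ of $M$ has vanishing divergence on $\Si\times\R$ by the equation, the $n$-form $\iota_N(\mathrm{vol})$ calibrates $M$, so $M$ is area-minimizing in $\Si\times\R$.

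Next I would blow down at infinity. Normalizing $u(p)=0$ and scaling $\Si\times\R$ by $\lambda^{-2}$ as $\lambda\to\infty$, the ambient subconverges (by C1, C2; \cite{CCo1,CCo2}) to a metric cone $C(Y)\times\R$, which by C3 and \cite{CT} is smooth and Ricci-flat away from its vertex $(o,0)$, with $Y$ a smooth closed manifold; and the area-minimizing graphs $M$ subconverge to an area-minimizing hypersurface $M_\infty\subset C(Y)\times\R$. Because $|\na u|\le C_0$ the limit is again uniformly graphical, $M_\infty=\{(x,u_\infty(x)):x\in C(Y)\}$ with $u_\infty$ Lipschitz, $|\na u_\infty|\le C_0$, solving the minimal surface equation where $C(Y)$ is smooth; and by the monotonicity of the area-density ratio of $M$ (which becomes exact on the conical limit --- this is where the structure theory of \cite{DJX} enters), $M_\infty$ is a cone with vertex $(o,0)$, so $u_\infty$ is homogeneous of degree one. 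If every blow-down limit were $u_\infty\equiv0$, a compactness argument would give $\sup_{\partial B_R}|u|=o(R)$, whence $u$ is sub-linear and, by Theorem \ref{Liusub}, constant --- contrary to assumption. So some blow-down yields a non-constant homogeneous degree-one minimal graphic function $u_\infty$ on $C(Y)$.

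Now I would exploit the product structure of $C(Y)\times\R$. On $M_\infty$ the angle function $w:=\langle N,\partial_{n+1}\rangle=(1+|\na u_\infty|^2)^{-1/2}\in[(1+C_0^2)^{-1/2},1]$ satisfies the Jacobi-type identity $\Delta_{M_\infty}w=-\big(|A_{M_\infty}|^2+\overline{\mathrm{Ric}}(N,N)\big)w\le0$ away from $(o,0)$, where $\overline{\mathrm{Ric}}$ is the Ricci tensor of $C(Y)\times\R$ (which is $\ge0$ there) and $w>0$. Since $u_\infty$ is homogeneous of degree one, $|\na u_\infty|$ --- hence $w$ --- is homogeneous of degree zero, so it descends to the compact link of $M_\infty$ and attains its minimum at an interior point; the strong minimum principle then forces $w\equiv\text{const}$, so $|A_{M_\infty}|\equiv0$ and $\overline{\mathrm{Ric}}(N,N)\equiv0$. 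For a graph, $|A_{M_\infty}|\equiv0$ is equivalent to $\na^2_{C(Y)}u_\infty\equiv0$; writing $u_\infty=r\phi$ in cone coordinates, a direct computation turns this into the Obata equation $\na^2_Y\phi+\phi\,g_Y=0$ on $Y$, with $\phi$ non-constant. Obata's theorem then gives $(Y,g_Y)\cong S^{n-1}(1)$, i.e. $C(Y)\cong\R^n$; thus $\Si$ has a Euclidean tangent cone at infinity, so $\mathrm{Vol}(B_R(p))/(\omega_nR^n)$, nonincreasing by Bishop--Gromov, has limit $1$ and is therefore identically $1$, and the equality case yields $\Si\cong\R^n$ --- the desired contradiction. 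Hence $u$ is constant.

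I expect the main obstacle to be the blow-down step: establishing the monotonicity of the area-density ratio (with the appropriate Riemannian corrections) and the compactness and regularity needed to conclude that $M_\infty$ is a smooth minimal cone \emph{and} still a uniformly Lipschitz graph over $C(Y)$, in an ambient that is only smooth away from a conical vertex --- essentially the structure theory of area-minimizing hypersurfaces under C1)--C3) developed in \cite{DJX}. Once that is in hand, the superharmonicity of $w$, the strong minimum principle, the graph--Hessian identity and Obata's theorem make the rest routine; the remaining care points are the uniform-in-scale version of the implication ``all blow-downs trivial $\Rightarrow$ $u$ sub-linear'' and the fact that the isolated conical points play no role in the mean value inequality or in Theorem \ref{Liusub}.
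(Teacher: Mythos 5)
Your overall strategy is genuinely different from the paper's (the paper proves pointwise Hessian decay $r\sup_{\p B_r}|D^2u|\to 0$ via the Bochner formula \eqref{Logv}, Green-function cutoffs and Schauder estimates in harmonic coordinates, then shows by distance-function comparisons that every tangent cone of $\Si$ at infinity splits off a line $\R$, and concludes with a volume-comparison/Bishop rigidity argument; it never blows down the graph $M$ itself to a minimizing cone, never does PDE on the singular limit, and never uses Obata). However, two of your steps are genuine gaps. First, you take as input that a blow-down $M_\infty$ of $M$ is an area-minimizing \emph{cone} with vertex $(o,0)$, attributing this to "monotonicity of the area-density ratio" and the structure theory of \cite{DJX}. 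There is no exact monotonicity here: the ambient $\Si\times\R$ is not a cone and does not satisfy C3) unless $\Si$ is flat, and in this paper the conicality of a blow-down of $M$ is only established (in Section 5, for the theorem without growth assumptions) along a specially chosen subsequence of scales, via the Green-function radius $\mathcal{R}$ and an almost-monotonicity argument. Moreover, your non-triviality argument ("if all blow-downs were trivial, $u$ would be sublinear") produces a non-trivial limit along \emph{some} scales, while conicality is only available along \emph{other}, special scales; you need both along one sequence. This can be repaired using the mean value equality \eqref{supDu2} (which forces $|\nabla u_\infty|=|Du|_0$ along every blow-down), but that repair is not in your write-up.

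Second, the limit ambient is not "smooth and Ricci-flat away from its vertex": \cite{CT} concerns Ricci-flat manifolds, which $\Si$ is not assumed to be, and under C1)--C3) the cross-section of the tangent cone carries only a $C^{1,\a}$ metric. Hence $\overline{\mathrm{Ric}}$ of $C(Y)\times\R$ is not classically defined, the Jacobi identity $\De_{M_\infty}w=-\left(|A|^2+\overline{\mathrm{Ric}}(N,N)\right)w$ cannot be written on the limit, and Obata's rigidity on $(Y,g_Y)$ also requires more regularity than you have. These issues are plausibly fixable: pass the inequality $\De\log v\ge|B|^2$ from \eqref{Logv} to the limit in the weak (distributional) sense from the smooth approximating graphs, apply the strong minimum principle for weak supersolutions to the degree-zero homogeneous $\log v_\infty$, deduce $\mathrm{Hess}\,u_\infty\equiv0$ via lower semicontinuity, and then either justify your Obata computation using the interior $C^{2,\a}$ regularity of $u_\infty$, or more simply observe that a parallel gradient splits the cone off a line and finish with the paper's own volume-translation and Bishop rigidity argument. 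As written, though, the key step invokes curvature data that the limit space does not possess, so the proposal is not complete; its endgame (superharmonic angle function, homogeneity, totally geodesic limit graph, Euclidean tangent cone, Bishop rigidity) would be an attractive alternative once the conical blow-down and the low-regularity issues are actually established.
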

For showing Theorem \ref{IntrL}, using the harmonic coordinates of
Jost-Karcher \cite{JK}, we first  obtain
scale-invariant Schauder estimates for minimal graphic functions
$u$. Then combining this with estimates for the  Green function and a
Bochner type formula, we get integral decay estimates for the Hessian
of $u$. The Schauder estimates then imply point-wise decay estimates
for the Hessian of $u$. Finally, by re-scaling the  manifold $\Si$ and
the graphic function $u$ we can show that $\Si$ is asymptotic to a
product of a Euclidean factor $\R$ and the level set of $u$ at the
value $0$. This will allow us to deduce Theorem \ref{IntrL}. By the
example in the last section, the assumption of linear growth cannot be removed. Moreover, one should  compare this theorem  with the result of
Cheeger-Colding-Minicozzi \cite{CCM} on the splitting  of the tangent cone at infinity for
complete manifolds with nonnegative Ricci curvature supporting linear growth harmonic functions.

We shall also investigate minimal graphic functions
without growth restrictions. Analogously to \cite{DJX} we prove that
for any scaling sequence of a minimal graph $M$ in $\Si\times\R$ there
exists a subsequence  that  converges to an area-minimizing cone
$T$ in $\Si_\infty\times\R$, where $\Si_\infty$ is some tangent cone
at infinity (not necessarily unique) of $\Si$ satisfying conditions
C1), C2) and C3). However, our proof here is more complicated than \cite{DJX} as the ambient manifold $\Si\times\R$ does no longer satisfy condition C3) unless $\Si$ is flat. In the light of stability inequalities for minimal
graphs, one might expect that $T$ is vertical, namely,
$T=\mathcal{X}\times\R$ for some cone $\mathcal{X}\in
N_\infty$. However, since we lack Sobolev  and
Neumann-Poincar$\acute{\mathrm{e}}$ inequalities on minimal graphs
whose gradient is not globally bounded, it seems difficult to show
verticality, although such  inequalities hold for ambient Euclidean
space \cite{MS}, \cite{BG}. Fortunately, we are able to show that $T$ is asymptotically
vertical at infinity. This  also suffices to estimate the measure of a 'bad' set and employ stability arguments,
as  in \cite{DJX}, to eliminate the unbounded situation of $|D u|,$
when the lower bound $\k$ for the curvature decay (see below) is large, that is, when the curvature can only slowly decay to 0 at infinity (more precisely, the curvature decay is quadratic, but the value of the lower bound must be above some critical threshold; actually that threshold is sharp showed in the last section).
Thus combining Theorem \ref{IntrL} we  obtain a
Liouville type theorem for minimal graphic  functions.

\begin{theorem}\label{BMGP1}
Let $\Si$ be a  complete $n-$dimensional Riemannian manifold
satisfying conditions C1), C2), C3) and with non-radial Ricci
curvature satisfying $\inf_{\p
  B_\r}Ric_{\Si}\left(\xi^T,\xi^T\right)\ge\k\r^{-2}|\xi^T|^2$ for
some constant $\k$, for sufficiently large $\r>0$, where $\r$ is the distance function from a fixed
point in $\Si$ and $\xi^T$ stands for the  part that is tangential to
the geodesic sphere $\p B_\r$ (at least away from the cut locus of the center),  of a tangent vector $\xi$ of $\Si$ at the considered point. If $\k>\f{(n-3)^2}4$, then any entire solution to \eqref{u0} on $\Si$ must be  constant.
\end{theorem}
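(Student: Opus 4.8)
The proof of Theorem \ref{BMGP1} will combine the linear-growth Liouville theorem, Theorem \ref{IntrL}, with an argument that rules out minimal graphic functions of super-linear growth when the curvature-decay constant $\k$ exceeds $\f{(n-3)^2}4$. So the first step is to assume, for contradiction, that $u$ is a non-constant entire solution to \eqref{u0}; by Theorem \ref{IntrL} we may assume $u$ has super-linear growth on both sides, hence $\sup_{B_\r}|Du|\to\infty$ along some sequence of radii. The strategy is then to blow down: rescale $(\Si,g)$ by a factor $r_i^{-2}$ and rescale $u$ correspondingly, so that the minimal graph $M\subset\Si\times\R$ converges (after passing to a subsequence) to an area-minimizing cone $T$ in $\Si_\infty\times\R$, where $\Si_\infty$ is a tangent cone at infinity of $\Si$ still satisfying C1)--C3), as indicated in the introduction (and as in \cite{DJX}). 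The key geometric input, asserted above, is that $T$ is \emph{asymptotically vertical at infinity}, i.e.\ close to $\mathcal X\times\R$ for a cone $\mathcal X\subset\Si_\infty$, even though verticality everywhere is not available for lack of Sobolev/Neumann--Poincar\'e inequalities on $M$ when $|Du|$ is unbounded.

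\medskip
\noindent
The heart of the matter is a stability/Simons-type argument on the limit cone $T$. Since $M$ is a minimal graph, it is the boundary of the subgraph region in $\Si\times\R$, hence stable; stability passes to the area-minimizing limit $T$. On $T$ one writes the stability inequality
\[
\int_T |A|^2 \varphi^2 + \int_T \overline{\mathrm{Ric}}(\nu,\nu)\,\varphi^2 \ \le\ \int_T |\na \varphi|^2
\]
for all $\varphi\in C_c^\infty(T)$, where $\overline{\mathrm{Ric}}$ is the Ricci curvature of the ambient $\Si_\infty\times\R$ and $\nu$ the unit normal. Because $T$ is asymptotically vertical, the unit normal $\nu$ becomes asymptotically tangent to the geodesic spheres $\p B_\r$ of $\Si_\infty$, so the term $\overline{\mathrm{Ric}}(\nu,\nu)$ is controlled below by the non-radial Ricci bound $\k\r^{-2}$ (which is inherited by $\Si_\infty$ by the conicality and the passage to the limit). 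Plugging the usual logarithmic cutoff $\varphi$ that lives on the cross-section and comparing the resulting Hardy-type inequality on $T$ with the sharp constant forces a contradiction precisely when $\k>\f{(n-3)^2}4$: the $\f{(n-3)^2}4$ is the sharp Hardy constant on the relevant cone (of dimension $n$, cross-section dimension $n-1$, with the extra $\R$ factor accounting for the shift from $(n-1)$ to $(n-3)$ in the exponent). This threshold matching is exactly the sharpness claimed, and is confirmed by the example in the last section. One also has to handle the 'bad set' where $T$ is not yet vertical: as in \cite{DJX}, one estimates its measure and shows its contribution to the stability inequality is negligible, so the Hardy argument on the good (vertical) part dominates.

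\medskip
\noindent
The main obstacle, and the step requiring the most care, is establishing that the blow-down limit $T$ is asymptotically vertical at infinity. Verticality of $T$ everywhere would follow immediately from a stability argument \emph{if} one had the Michael--Simon Sobolev inequality on $M$ uniformly, as in the Euclidean case \cite{MS}, \cite{BG}; but without a global gradient bound on $u$ this is unavailable. The paper's remedy is to prove only the weaker asymptotic statement. I expect this to proceed by: (i) a monotonicity-formula argument at infinity on $T$ to show $T$ has a well-defined tangent cone at infinity which is itself area-minimizing and stable in a product $\Si_\infty'\times\R$; (ii) using the stability inequality together with the conical structure to show that this iterated tangent cone at infinity has a vertical $\R$-direction — essentially a Frankel/splitting-type argument forcing the second fundamental form in the $\R$-direction to vanish; (iii) transferring this back to control $T$ itself outside a bad set of small relative measure. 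Once asymptotic verticality is in hand, the stability/Hardy computation above is comparatively routine, and combining with Theorem \ref{IntrL} (to dispose of the linear-growth case, where $\Si$ would have to be Euclidean, but then \eqref{u0} is the classical minimal surface equation and super-linear growth is already excluded, so $u$ is affine, hence with the C2)--C3) hypotheses genuinely satisfied only in the trivial way — actually one argues $u$ constant directly) completes the proof.
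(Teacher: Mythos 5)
Your overall architecture is the paper's: reduce to the case $\sup_\Si|Du|=\infty$ via Theorem \ref{IntrL} and the gradient estimate of Theorem \ref{GEu} (noting that $\k>\f{(n-3)^2}4\ge0$ forces $\k>0$, so the Euclidean alternative in Theorem \ref{IntrL} is excluded by the Ricci hypothesis --- your parenthetical about affine functions is a detour); blow down to an area-minimizing cone $T\subset\Si_\infty\times\R$; use asymptotic verticality together with a bad-set estimate and the stability inequality with a logarithmic cutoff $\r^{\f{3-n}2}\sin\left(\pi\f{\log\r}{\log\ep}\right)$, whose sharp Hardy constant on an $(n-1)$-dimensional cone is exactly $\f{(n-3)^2}4$ (Lemma \ref{kn-3}). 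The constant-matching heuristic and the role of the vertical slab cutoff are essentially right.

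The genuine gap is in the step you yourself flag as the heart of the matter: \emph{how} asymptotic verticality and the smallness of the bad set are proved. Your proposed mechanism --- a monotonicity formula at infinity for $T$ followed by a stability/Frankel ``splitting'' argument forcing the second fundamental form in the $\R$-direction to vanish --- does not work as stated: $T$ is already a cone, so dilation blow-downs are vacuous (the relevant asymptotics are under \emph{vertical translations}, not dilations), and the paper stresses that stability cannot be used to produce verticality here precisely because the Sobolev and Neumann--Poincar\'e inequalities are unavailable; stability enters only afterwards, in the Hardy-type contradiction. What the paper actually does is quite different and more concrete: the subgraphs of the rescaled functions converge in $BV$ to the subgraph of a generalized function $\mathfrak u$, homogeneous of degree one; since $\sup_\Si|Du|=\infty$ the vertical ray $\{o\}\times(0,\infty)$ lies in $T$, so the cone $P=\{\mathfrak u=+\infty\}$ is nonempty, high horizontal slices of $T$ Hausdorff-converge to $\p P$ (see \eqref{TB2Bepde}), and the density bounds give $H^{n-2}(\p P\cap\p\mathcal{B}_1)>0$ (\eqref{Hn-2pP}); the bad set $E_i$ in \eqref{EiTi} is then shown to have measure $<\ep^n$ by a flux/varifold-convergence computation against the radial $1$-form, \eqref{epiMUpK}--\eqref{1de2epKs} (Lemma \ref{HnEiepn}), not by a stability or splitting argument. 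A related technical flaw in your plan: you write the stability inequality directly on $T$ with $\overline{\mathrm{Ric}}$ of $\Si_\infty\times\R$, but the limit metric is only $C^{1,\a}$, so its Ricci curvature is not defined pointwise; one must apply stability on the smooth rescaled graphs $\ep_iM$, where the scale-invariant hypothesis \eqref{Riccond} gives the lower bound on the good set, and only pass to the limit in the resulting integrals, as in Lemma \ref{kn-3}. Without a correct argument for the verticality/bad-set step, the Hardy computation has nothing to act on, so as written the proposal does not yet constitute a proof.
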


In the last section, we construct a nontrivial minimal graph to show
that the constant $\f{(n-3)^2}4$ in Theorem \ref{BMGP1} is sharp. Our
approach is inspired by
 the  method developed  by Simon \cite{Si2}, where for each strictly
 minimizing isoparametric cone $C$ in $\R^n$ he constructed an entire
 minimal graph in $\R^{n+1}$ converging to $C\times\R$ as tangent
 cylinder at $\infty$. Geometrically, our method is different from
 that of Simon, but analytically, it is quite similar.

\section{Preliminaries}

Let $(\Si,\si)$ be an $n$-dimensional complete non-compact manifold
with Riemannian metric $\si=\sum_{i,j=1}^n\si_{ij}dx_idx_j$ in a local
coordinate. Set $(\si^{ij})$ be the inverse matrix of $(\si_{ij})$ and
$E_i$ be the dual frame of $dx_i$. Denote
$Du=\sum_{i,j}\si^{ij}u_iE_j$ and
$|Du|^2=\sum_{i,j}\si^{ij}u_iu_j$. Let $\div_{\Si}$ be the divergence
of $\Si$. We shall study the following quasi-linear elliptic equation
for a minimal graphic function
on $\Si$
\begin{equation}\label{u}
\div_\Si\left(\f{Du}{\sqrt{1+|Du|^2}}\right)\triangleq\f1{\sqrt{\det \si_{kl}}}\p_j\left(\sqrt{\det \si_{kl}}\f{\si^{ij}u_i}{\sqrt{1+|Du|^2}}\right)=0.
\end{equation}
A solution  $\{(x,u(x))\in \Si\times\R|\ x\in\Si\}$ represents a minimal hypersurface in the product manifold $N\triangleq\Si\times\R$ with the product metric
$$ds^2=dt^2+\si=dt^2+\sum_{i,j}\si_{ij}dx_idx_j.$$
Let $M$ denote this hypersurface, i.e.,
$M=\{(x,u(x))\in N|\ x\in\Si\},$
with the induced metric $g$ from $N$
$$g=\sum_{i,j}g_{ij}dx_idx_j=\sum_{i,j}(\si_{ij}+u_iu_j)dx_idx_j,$$
where $u_i=\f{\p u}{\p x_j}$ and and $u_{ij}=\f{\p^2u}{\p x_i\p x_j}$ in the sequel.
Moreover, $\det g_{ij}=(1+|Du|^2)\det \si_{ij}$ and $g^{ij}=\si^{ij}-\f{u^iu^j}{1+|Du|^2}$ with $u^i=\sum_j\si^{ij}u_j$.

Let $\De$, $\na$ be the Laplacian operator and Levi-Civita connection
of $(M,g)$, respectively. The equation \eqref{u} then becomes
\begin{equation}\aligned
\De u=\f1{\sqrt{\det g_{kl}}}\p_j\left(\sqrt{\det g_{kl}}g^{ij}u_i\right)=0.
\endaligned
\end{equation}
Thus, $u$ is a harmonic function on the hypersurface$M$, which in turn
depends on $u$.

Similar to the Euclidean case (\cite{X} or by Lemma 2.1 in
\cite{DJX}), any minimal graph over a bounded domain $\Om$ is an area-minimizing hypersurface in $\Om\times\R$. From the proof of Lemma 2.1 in
\cite{DJX}, it is not hard to see that any minimal graph over $\Si$ is an area-minimizing hypersurface in $\Si\times\R$.

Let $\overline{\na}$ and $\overline{R}$ be the Levi-Civita connection and curvature tensor of $(N,h)$. Let $\lan\cdot,\cdot\ran$ be the inner product on $N$ with respect to its metric. Let $\nu$ denote the unit normal vector field of $M$ in $N$ defined by
\begin{equation}\label{No}
\nu=\f1{\sqrt{1+|Du|^2}}(-Du+E_{n+1}).
\end{equation}
Choose a local orthonormal frame field $\{e_i\}_{i=1}^n$ in $M$. Set the coefficients of the second fundamental form $h _{ij}=\lan\overline{\na}_{e_i}e_j,\nu\ran$
and the squared norm of the second fundamental form $|B|^2=\sum_{i,j}h_{ij}h_{ij}$. Then the mean curvature $H=\sum_ih_{ii}=0$ as $M$ is minimal.

Let $\overline{Ric}$ be the Ricci curvature of $\Si\times\R$.
Due to the Codazzi equation $h_{ijk}=h_{kji}-\lan\overline{R}_{e_ke_i}e_j,\nu\ran$ (see \cite{X}, for example),
we obtain a Bochner type formula
\begin{equation}\aligned\label{Enu2}
\De\lan E_{n+1},\nu\ran
=&-\big(|B|^2+\overline{Ric}(\nu,\nu)\big)\lan E_{n+1},\nu\ran.
\endaligned
\end{equation}
Let $Ric$ denote the Ricci curvature of $\Si$. With (\ref{No}), then
\begin{equation}\aligned\label{Logv}
-\De\log\lan E_{n+1},\nu\ran=|B|^2+\f{Ric(Du,Du)}{1+|Du|^2}+|\na\log\lan E_{n+1},\nu\ran|^2.
\endaligned
\end{equation}
The above formula will play a similar role as Bochner's formula for
the squared length of the gradient of a harmonic function.

In the present paper we usually suppose that $\Si$ is an $n(\ge3)$-dimensional complete non-compact Riemannian manifold satisfying the following three conditions:

C1) Nonnegative Ricci curvature: Ric $\ge 0$;

C2) Euclidean volume growth: for the geodesic balls $B_r(x)$ in $\Si$,
\begin{equation*}
V_\Si\triangleq\lim_{r\rightarrow\infty}\f{Vol\big(B_r(x)\big)}{r^n}>0;
\end{equation*}

C3) Quadratic  decay of the curvature tensor: for sufficiently large $\r(x)=d(x,p)$, the distance from a fixed point in $N$,
\begin{equation*}
|R(x)|\le\f c{\r^{2}(x)}.
\end{equation*}

By the Bishop-Gromov volume comparison theorem
$\lim_{r\rightarrow\infty}\f{Vol(B_r(x))}{r^n}$ is monotonically
nonincreasing, and hence
\begin{equation}\aligned\label{VolSi}
V_\Si r^n\le Vol\big(B_r(x)\big)\le\omega_nr^n \qquad \mathrm{for\ all}\ x\in\Si\ \mathrm{and}\ r>0,
\endaligned
\end{equation}
where $\omega_n$ is the volume of the standard $n$-dimensional unit ball in Euclidean space.
The above three conditions have been used in \cite{DJX} to study minimal hypersurfaces in such manifolds.
Now we list some properties of $\Si$ satisfying conditions C1), C2), C3) (see \cite{DJX} for completeness), which will be employed in the following text.

\begin{itemize}
  \item By \cite{CGT}, there is a sufficiently small constant $\de_0>0$ depending only on $n,c,V_{\Si}$ so that for any $0<\de<\de_0$ the injectivity radius at $q\in\p B_{\Om r}(p)$ satisfies $i(q)\ge r$, where $\Om=\left(\f{\sqrt{c}}{\de}+1\right)$.
  \item Let $G(p,\cdot)$ be the Green function on $\Si$ with $\lim_{r\rightarrow0}\sup_{\p B_r(p)}\left|Gr^{n-2}-1\right|=0$ and $b=G^{\f1{2-n}}$(see \cite{MSY} and \cite{CM1} for details). Then
\begin{equation}\aligned\label{Deb}
\De_{\Si}b^2=2n|D b|^2
\endaligned
\end{equation}
with $|Db|\le1$ and $c'r\le b(x)\le r$ for any $n\ge3$, $x\in\p B_r(p)$ and some constant $c'>0$. Moreover, we have asymptotic estimates
\begin{equation}\aligned\label{br}
\limsup_{r\rightarrow\infty}\left(\sup_{x\in\p B_r}\left(\left|\f br-\left(\f{V_{\Si}}{\omega_n}\right)^{\f1{n-2}}\right|+\left|Db-\left(\f{V_{\Si}}{\omega_n}\right)^{\f1{n-2}}\right|\right)\right)=0,
\endaligned
\end{equation}
and
\begin{equation}\aligned\label{Hessb2}
\limsup_{r\rightarrow\infty}\left(\sup_{x\in\p B_r}\left|\mathrm{Hess}_{b^2}-2\left(\f{V_{\Si}}{\omega_n}\right)^{\f2{n-2}}\si\right|\right)=0.
\endaligned
\end{equation}
\item By Gromov's compactness theorem \cite{GLP} and Cheeger-Colding's theory \cite{CC}, for any sequence $\bar{\ep}_i\rightarrow0$
there is a subsequence $\{\ep_i\}$ converging to zero such that $\ep_i \Si=(\Si,\ep_i \si,p)$ converges to a metric cone $(\Si_{\infty},d_{\infty})$ with vertex $o$ in the pointed Gromov-Hausdorff sense. $\Si_\infty$ is called the tangent cone at infinity and \\ $\Si_{\infty}=CX=\R^+\times_{\r}X$ for some $(n-1)$-dimensional smooth compact manifold $X$ of Diam $(X)\le\pi$ and the metric $s_{ij}d\th_id\th_j$ with $s_{ij}\in C^{1,\a}(X)$ (see also \cite{GW}\cite{Ps}). For any compact domain $K\subset \Si_{\infty}\setminus\{o\}$, there exists a diffeomorphism $\Phi_i:\ K\rightarrow \Phi_i(K)\subset\ep_i\Si$ such that $\Phi_i^*(\ep_i \si)$ converges as $i\rightarrow\infty$ to $\si_{\infty}$ in the $C^{1,\a}$-topology on $K$.                                                                        \end{itemize}

\section{Gradient estimates and applications}

Let $u$ be a minimal graphic function on a Riemannian manifold $\Si$. For
$$v\triangleq\f1{\left\lan E_{n+1},\nu\right\ran}=\sqrt{1+|Du|^2},$$
we have derived  the Bochner type formula (\ref{Logv}).
Let $B_r(x)$ be the geodesic ball in $\Si$ with radius $r$ and centered at $x\in\Si$. Sometimes we write $B_r$ instead of $B_r(p)$ for simplicity. Let $d\mu$ be the volume element of $M$.
\begin{lemma}
Suppose $\Si$ has nonnegative Ricci curvature and $u(p)=0$, then for any constant $\be>0$
\begin{equation}\aligned\label{DuK}
\int_{B_r\cap\{|u|<\be r\}}\log v d\mu\le(1+10\be)\left(2+\be+r^{-1}\sup_{B_{3r}}u\right)Vol(B_{3r}).
\endaligned
\end{equation}
\end{lemma}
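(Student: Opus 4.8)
The plan is to estimate $\int_{B_r\cap\{|u|<\be r\}}\log v\,d\mu$ by testing the Bochner-type identity \eqref{Logv} against a suitable cutoff function built from the graph coordinates, and then controlling the resulting boundary/gradient terms by the area of a slightly larger ball. Recall that on the minimal graph $M$ we have $\De\log v=-\big(|B|^2+\frac{Ric(Du,Du)}{1+|Du|^2}+|\na\log v|^2\big)$, so $-\De\log v\ge 0$ under C1); equivalently $\log v$ is superharmonic on $M$. The key point is that $\log v$, though superharmonic, is \emph{not} integrable a priori over all of $M$, so one must localize using a test function that is adapted both to the base ball $B_r$ in $\Si$ and to the slab $\{|u|<\be r\}$.

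First I would choose a Lipschitz cutoff $\varphi$ on $M$ of the form $\varphi = \phi(\rho)\,\psi(u)$, where $\rho = \rho(x)$ is the distance in $\Si$ from $p$ (pulled back to $M$), $\phi$ is $1$ on $[0,r]$, $0$ outside $[0,2r]$ (or $3r$), with $|\phi'|\le C/r$, and $\psi$ is a nonnegative Lipschitz bump supported near $\{|u|<\be r\}$ with $|\psi'|\le C/(\be r)$, say $\psi\equiv 1$ on $\{|u|\le \be r/2\}$. Then integrate: since $\int_M \varphi^2(-\De\log v)\,d\mu \ge 0$ and in fact equals $\int_M 2\varphi\langle\na\varphi,\na\log v\rangle\,d\mu$ after integration by parts (no boundary term, $\varphi$ compactly supported), one obtains
$$
\int_M \varphi^2\big(|B|^2 + |\na\log v|^2\big)\,d\mu \le \int_M 2\varphi\,|\na\varphi|\,|\na\log v|\,d\mu,
$$
and Cauchy–Schwarz absorbs the $|\na\log v|^2$ term, giving $\int_M \varphi^2|\na\log v|^2 \le C\int_M |\na\varphi|^2$. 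However this bounds the \emph{gradient} of $\log v$, not $\log v$ itself, so a second idea is needed: one should instead test against a function that produces $\log v$ linearly. The cleaner route is to use the divergence structure directly. Write $w=\log v$. Then $\div_M(\na w) = \De w \le -|\na w|^2 \le 0$, but more usefully, observe that $v = 1/\langle E_{n+1},\nu\rangle$ and the vector field $E_{n+1}^T$ (tangential projection of $E_{n+1}$ to $M$) satisfies $\div_M E_{n+1}^T = -H\langle E_{n+1},\nu\rangle = 0$ since $M$ is minimal — this is the real source of the estimate. Indeed $|E_{n+1}^T|^2 = 1 - \langle E_{n+1},\nu\rangle^2 = 1 - v^{-2}$, and $\langle E_{n+1}^T, \na_M x_k\rangle$-type computations let one relate $\int \log v$ to fluxes of $E_{n+1}^T$.

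Concretely, I would exploit that the height function $u$ on $M$ is harmonic, $\De_M u = 0$, together with $|\na_M u|^2 = 1 - v^{-2}$ (a standard graph identity, since $\na_M u = E_{n+1}^T$ up to the identification), so that $\De_M(u^2) = 2|\na_M u|^2 = 2(1-v^{-2})$. Testing $\De_M(u^2)=2(1-v^{-2})$ against the cutoff $\phi(\rho)$ and noting $\log v \le \frac{1}{2}(v^2 - 1)/v^2 \cdot (\text{something})$ — more precisely using $\log v \le \frac{1}{2}(1 - v^{-2})\cdot\frac{v^2}{?}$, one wants the elementary inequality $\log v \le C(1 - v^{-2})$ which is \emph{false} for large $v$; rather $\log v$ grows like $\log|Du|$ while $1-v^{-2}\to 1$. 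So the honest statement must be: on the region where $v$ is bounded, $\log v \lesssim 1-v^{-2}$ with a fixed constant, and on the region where $v$ is large one uses a logarithmic cutoff trick. This is exactly where the factor $(1+10\be)(2+\be+r^{-1}\sup_{B_{3r}}u)$ comes from — the slab condition $|u|<\be r$ and the bound on $\sup u$ jointly control how $\log v$ can blow up, via a De Giorgi–Moser–type iteration or, more likely in this paper's style, a direct weighted integration of \eqref{Logv} against $\phi(\rho)^2\,\chi(u)$ followed by the coarea formula in the $u$-direction. The area bound $Vol(B_{3r})$ on the right enters because the graph $M$ over $B_{3r}$ has area comparable to (indeed, at least) $Vol(B_{3r})$ and one bounds $\int_M \varphi^2 |B|^2$-type quantities or boundary terms by $Vol(\partial B_{3r})\cdot r \lesssim Vol(B_{3r})$ via the volume comparison \eqref{VolSi}.

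The main obstacle, and the step requiring the most care, is precisely reconciling the unboundedness of $\log v$ with the boundedness of the "energy-type" quantity $1-v^{-2}$: the naive test-function argument only controls $\int|\na\log v|^2$ or $\int(1-v^{-2})$, and upgrading to an $L^1$ bound on $\log v$ itself over the slab demands either (a) a Moser iteration on the superharmonic function $\log v$ restricted to $M$, using that $M$ satisfies a uniform local Sobolev/volume-doubling estimate (available here from C1)–C3) and the area-minimizing property), or (b) a clever choice of test function $\eta = \eta(u)$ with $\eta$ growing like $u$ near the slab boundary so that integrating $\langle\na\log v,\na\eta\rangle$ reproduces $\int\log v$ after using $\De u = 0$. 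I expect the authors take route (b): multiply \eqref{Logv} by $\phi(\rho)^2$, integrate by parts moving one derivative onto $\log v$, use $\De u = 0$ and the slab/cutoff structure to convert the $\int \phi^2|\na\log v|^2$ term into control on $\int\phi^2\log v$ via the inequality $\log v \le \langle E_{n+1}^T,\na u\rangle$-flux terms plus $|\na u|^2$, and finally bound everything by $Vol(B_{3r})$ using \eqref{VolSi} and $|\phi'|\le C/r$. The bookkeeping of the numerical constants $1+10\be$ and $2+\be$ is routine once the right test function is fixed, but identifying that test function is the crux.
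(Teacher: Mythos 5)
Your reduction to a Caccioppoli bound for $\na\log v$ is fine, modulo a sign: since $v=1/\lan E_{n+1},\nu\ran$, formula \eqref{Logv} gives $\De\log v=|B|^2+\tfrac{Ric(Du,Du)}{1+|Du|^2}+|\na\log v|^2\ge|\na\log v|^2$, so $\log v$ is \emph{sub}harmonic on $M$, not superharmonic (the Caccioppoli estimate comes out the same after fixing the sign). But the step you yourself identify as the crux --- producing $\int\log v\,d\mu$ on the left-hand side --- is precisely what is missing, and the routes you sketch do not close it: the inequality $\log v\lesssim 1-v^{-2}$ you already note is false for large $v$; a Moser/De Giorgi iteration on $M$ would need Sobolev and doubling properties of the graph, which in this paper are only available \emph{after} the gradient estimate (they require $|Du|$ bounded), so that route is circular at this stage; and the flux-of-$E_{n+1}^T$ and coarea suggestions are left unverified. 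Moreover, the constants $(1+10\be)$ and $\bigl(2+\be+r^{-1}\sup_{B_{3r}}u\bigr)Vol(B_{3r})$ do not arise from comparing the area of the graph over $B_{3r}$ with $Vol(\p B_{3r})$: the graph over $B_{3r}$ can have area far exceeding $Vol(B_{3r})$ when $|Du|$ is large, and controlling exactly that weighted area is the content of the lemma.

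The missing idea is to test the divergence-form equation on the base, $\div_\Si(Du/v)=0$, against test functions built from truncations of $u$ itself. With $u_r$ the truncation of $u$ at the levels $\pm\be r$ and $\eta$ a cutoff of $B_r$ inside $B_{2r}$, the paper's key test function is $(u_r+\be r)\eta\log v$: on the slab $\{|u|<\be r\}$ one has $Du_r=Du$, so integration by parts produces $\int_{B_r\cap\{|u|<\be r\}}\f{|Du|^2}{v}\log v\,d\mu_\Si$, and since $d\mu=v\,d\mu_\Si$ and $v\le\f{|Du|^2}{v}+1$ this dominates $\int_{B_r\cap\{|u|<\be r\}}\log v\,d\mu$; the error terms carry the bounded factor $0\le u_r+\be r\le2\be r$ and involve $|D\eta|\le 1/r$ and $\int\eta|D\log v|\,d\mu_\Si$, the latter handled by your Caccioppoli bound together with $|D\log v|\le v|\na\log v|$ and Young's inequality, at the price of the weighted volume $\f1r\int_{B_{2r}\cap\{u>-(1+\be)r\}}v\,d\mu_\Si$. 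That quantity is then bounded by $\bigl(2+\be+r^{-1}\sup_{B_{3r}}u\bigr)Vol(B_{3r})$ through a further application of the equation with test function $\tilde\eta\max\{u+(1+\be)r,0\}$, which is where the one-sided bound $\sup_{B_{3r}}u$ enters. Without these specific test functions --- in particular the factor $\log v$ multiplied by a bounded truncation of $u$ inside the weak formulation on $\Si$ --- the estimate does not assemble, so as it stands the proposal has a genuine gap.
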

\begin{proof}
We define a function $u_s$ by
\begin{eqnarray*}
   u_s= \left\{\begin{array}{ccc}
           \be s  & \quad\quad  {\rm{if}} \ \ \     u\geq \be s \\ [3mm]
            u     & \quad\ \ \ \ {\rm{if}} \ \ \  |u|< \be s \\ [3mm]
           -\be s  & \quad\quad\ \ {\rm{if}} \ \ \   u\leq-\be s.
     \end{array}\right.
\end{eqnarray*}
$\r(x)$ is a global Lipschitz function with $|D\r|\equiv1$ almost everywhere. We define a Lipschitz function $\z(r)$ on $[0,\infty)$ satisfying $\mathrm{supp}\z\subset [0,2r]$, $\z\big|_{[0,r)}\equiv1$ and $|D\z|\le\f 1r$. Then $\eta(x)=\z(\r(x))$ is a Lipschitz function with $\mathrm{supp}\eta\subset \bar{B}_{2r}$, $\eta\big|_{B_r}\equiv1$ and $|D\eta|\le\f 1r$. Then by using (\ref{u}) and integrating by parts we have
\begin{equation}\aligned
0=-\int\div_\Si\left(\f{Du}{v}\right)\eta u_r d\mu_\Si\ge\int_{B_r\cap\{|u|<\be r\}}\f{|Du|^2}{v}d\mu_\Si+\int\f{Du\cdot D\eta}{v}u_rd\mu_\Si,
\endaligned
\end{equation}
which implies
\begin{equation}\aligned\label{VolDr}
\int_{B_r\cap\{|u|<\be r\}}1d\mu\le&\int_{B_r\cap\{|u|<\be r\}}\left(\f{|Du|^2}{v}+1\right)d\mu_\Si\\
\le& Vol(B_r)+\be r\int_{B_{2r}}|D\eta|d\mu_\Si\le (1+\be)Vol(B_{2r}).
\endaligned
\end{equation}
From integrating by parts, we deduce
\begin{equation}\aligned
0=&-\int\div_\Si\left(\f{Du}{v}\right)\cdot (u_r+\be r)\eta\log v\ d\mu_\Si\\
\ge&\int_{B_r\cap\{|u|<\be r\}}\f{|Du|^2}{v}\log v\ d\mu_\Si+\int\f{Du\cdot D\eta}{v}(u_r+\be r)\log v\ d\mu_\Si\\
&+\int\f{Du\cdot D\log v}{v}(u_r+\be r)\eta d\mu_\Si\\
\ge&\int_{B_r\cap\{|u|<\be r\}}\f{|Du|^2}{v}\log v\ d\mu_\Si-2\be r\int_{B_{2r}\cap\{u>-\be r\}}|D\eta|\log v\ d\mu_\Si\\
&-2\be r\int_{B_{2r}\cap\{u>-\be r\}}|D\log v|\eta d\mu_\Si,
\endaligned
\end{equation}
then we obtain
\begin{equation}\aligned\label{logv}
&\int_{B_r\cap\{|u|<\be r\}}\log v d\mu\le\int_{B_r\cap\{|u|<\be r\}}\left(\f{|Du|^2}{v}+1\right)\log v\ d\mu_\Si\\
\le&(1+2\be )\int_{B_{2r}\cap\{u>-\be r\}}\log v\ d\mu_\Si+2\be r\int_{B_{2r}\cap\{u>-\be r\}}|D\log v|\eta d\mu_\Si.
\endaligned
\end{equation}
Obviously, \eqref{Logv} implies $\De\log v\ge|\na\log v|^2$, then for any $\xi\in C^1_0(B_{2r}\times\R)$ we have
\begin{equation}\aligned
&\int|\na\log v|^2\xi^2d\mu\le\int\xi^2\De\log v d\mu=-2\int\xi\na\xi\cdot\na\log v d\mu\\
\le&\f12\int|\na\log v|^2\xi^2d\mu+2\int|\na\xi|^2d\mu.
\endaligned
\end{equation}
Set $\xi(x,t)=\eta(x)\tau(t)$ for $(x,t)\in\Si\times\R$, where $0\le\tau\le1$, $\tau\equiv1$ in $(-\be r,\sup_{B_{2r}}u)$, $\tau\equiv0$ outside $(-(1+\be)r,r+\sup_{B_{2r}}u)$, $|\tau'|<\f1{r}$. Then
\begin{equation}\aligned\label{nalogv}
\int|\na\log v|^2\xi^2d\mu\le&4\int|\na\xi|^2d\mu\le8\int\left(|\na\eta|^2\tau^2+|\na\tau|^2\eta^2\right)d\mu\\
\le&\f{16}{r^2}\int_{B_{2r}\cap\{u>-(1+\be)r\}}1d\mu.
\endaligned
\end{equation}
Together with
\begin{equation}\aligned
|\na\log v|^2=\sum_{i,j}g^{ij}\p_i\log v\cdot\p_j\log v=|D\log v|^2-\f{|Du\cdot D\log v|^2}{1+|Du|^2}\ge\f{|D\log v|^2}{1+|Du|^2},
\endaligned
\end{equation}
and \eqref{nalogv} it follows that
\begin{equation}\aligned\label{Dlogv}
&\int_{B_{2r}\cap\{u>-\be r\}}|D\log v|\eta d\mu_\Si\le\int_{B_{2r}\cap\{u>-\be r\}}|\na\log v|\eta vd\mu_\Si\\
\le&\int_{B_{2r}\cap\{u>-\be r\}}\left(\f{|\na\log v|^2\eta^2r}8+\f2{r}\right) vd\mu_\Si\\
\le&\f r8\int|\na\log v|^2\xi^2d\mu+\f2{r}\int_{B_{2r}\cap\{u>-\be r\}}vd\mu_\Si\\
\le&\f 2r\int_{B_{2r}\cap\{u>-(1+\be)r\}}1d\mu+\f2{r}\int_{B_{2r}\cap\{u>-\be r\}}vd\mu_\Si\\
\le&\f 4r\int_{B_{2r}\cap\{u>-(1+\be)r\}}vd\mu_\Si.
\endaligned
\end{equation}
Note that $\log v\le v$ as $v\ge1$, substituting \eqref{Dlogv} into \eqref{logv} we obtain
\begin{equation}\aligned\label{10be}
&\int_{B_r\cap\{|u|<\be r\}}\log v d\mu\le(1+2\be)\int_{B_{2r}\cap\{u>-\be r\}}v d\mu_\Si+8\be\int_{B_{2r}\cap\{u>-(1+\be)r\}}vd\mu_\Si\\
&\le(1+10\be)\int_{B_{2r}\cap\{u>-(1+\be)r\}}vd\mu_\Si.
\endaligned
\end{equation}
Let $\tilde{\eta}$ be a Lipschitz function with $\mathrm{supp}\tilde{\eta}\subset \bar{B}_{3r}$ with $\tilde{\eta}\big|_{B_{2r}}\equiv1$ and $|D\tilde{\eta}|\le\f 1r$. Then
\begin{equation}\aligned
0=&-\int_{B_{3r}}\div_\Si\left(\f{Du}{v}\right)\tilde{\eta}\cdot\max\{u+(1+\be)r,0\} d\mu_\Si\\
\ge&\int_{B_{2r}\cap\{u>-(1+\be)r\}}\f{|Du|^2}{v}d\mu_\Si+\int_{B_{3r}}\f{Du\cdot D\tilde{\eta}}{v}\max\{u+(1+\be)r,0\}d\mu_\Si\\
\ge&\int_{B_{2r}\cap\{u>-(1+\be)r\}}\f{|Du|^2}{v}d\mu_\Si-\f1r\int_{B_{3r}}\max\{u+(1+\be)r,0\}d\mu_\Si,
\endaligned
\end{equation}
which implies
\begin{equation}\aligned\label{1+ber}
\int_{B_{2r}\cap\{u>-(1+\be)r\}}vd\mu_\Si\le&\int_{B_{2r}\cap\{u>-(1+\be)r\}}\left(\f{|Du|^2}{v}+1\right)d\mu_\Si\\
\le&\int_{B_{2r}}d\mu_\Si+\f1r\int_{B_{3r}}\left(\sup_{B_{3r}}u+(1+\be)r\right)d\mu_\Si\\
\le&\left(2+\be+r^{-1}\sup_{B_{3r}}u\right)Vol(B_{3r}).
\endaligned
\end{equation}
Combining \eqref{10be} and \eqref{1+ber} we complete the proof of the Lemma.
\end{proof}

For any $z_i=(x_i,t_i),\, i=1,2$, denote the distance function $\bar{\r}$ on $\Si\times\R$ by
$$\bar{\r}_{z_1}(z_2)=\sqrt{\r_{x_1}^2(x_2)+(t_2-t_1)^2}.$$
For any $q,x\in\Si$ there are $\tilde{q}=(q,u(q))\in M$ and $\tilde{x}=(x,u(x))\in M$ such that $\bar{\r}_{\tilde{q}}(\tilde{x})=\sqrt{\r_q^2(x)+(u(x)-u(q))^2}$.
\begin{lemma}
Suppose the sectional curvature $R(x)\le K^2$ on $B_{i(q)}(q)$ with injective radius $i(q)$ at $q$, then we have
$$\De\bar{\r}_q^2(\tilde{x})\ge2+2(n-1)K\r_q(x)\cot(K\r_q(x))\qquad \mathrm{for}\ \ 0<\r_q(x)<\min\left\{i(q),\f{\pi}{2K}\right\}.$$
\end{lemma}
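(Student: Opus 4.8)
The plan is to exploit that $N=\Si\times\R$ carries a flat $\R$-factor together with the minimality of $M$, so that the whole computation collapses to the Hessian comparison theorem on $\Si$ alone. Write $\tilde q=(q,u(q))$, $\tilde x=(x,u(x))$, and let $\bar{\r}_q$ be the distance function on $N$ from $\tilde q$, so that $\bar{\r}_q^2(y,s)=\r_q^2(y)+(s-u(q))^2$ on $N=\Si\times\R$. Since $M^n\subset N^{n+1}$ has vanishing mean curvature, for every $f\in C^2(N)$ and every local orthonormal frame $\{e_i\}_{i=1}^n$ of $M$ one has $\De(f|_M)=\sum_{i=1}^n\mathrm{Hess}_N(f)(e_i,e_i)$. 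I would apply this with $f=\bar{\r}_q^2$. Because $N$ is a Riemannian product with flat $\R$-factor, a short computation gives, for $X\in T_{\tilde x}N$ decomposed as $X=X^\Si+\lan X,E_{n+1}\ran E_{n+1}$ into its parts tangent to the two factors,
\[
\mathrm{Hess}_N(\bar{\r}_q^2)(X,X)=\mathrm{Hess}_\Si(\r_q^2)\big(X^\Si,X^\Si\big)+2\lan X,E_{n+1}\ran^2 .
\]

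Next I would invoke the Hessian comparison theorem on $\Si$. Since $0<\r_q(x)<\min\{i(q),\f{\pi}{2K}\}$, the minimizing geodesic in $\Si$ from $q$ to $x$ stays inside $B_{i(q)}(q)$, where the sectional curvature of $\Si$ is at most $K^2$, and it avoids the cut locus of $q$; hence $\mathrm{Hess}_\Si(\r_q)\ge K\cot(K\r_q)\big(\si-D\r_q\otimes D\r_q\big)$ at $x$. Setting $a:=2K\r_q\cot(K\r_q)$, and noting $a\in(0,2)$ because $0<K\r_q<\f{\pi}2$, this gives
\[
\mathrm{Hess}_\Si(\r_q^2)=2\r_q\,\mathrm{Hess}_\Si(\r_q)+2\,D\r_q\otimes D\r_q\ \ge\ a\,\si+(2-a)\,D\r_q\otimes D\r_q .
\]
Substituting into the formula above with $X=e_i$ and summing over $i$, and setting $\tau^2:=\sum_i\lan e_i,E_{n+1}\ran^2$, $\xi^2:=\sum_i\lan e_i^\Si,D\r_q\ran^2$, together with $\sum_i|e_i^\Si|^2=n-\tau^2$, I obtain
\[
\De\bar{\r}_q^2\ \ge\ a\,(n-\tau^2)+(2-a)\,\xi^2+2\tau^2\ =\ a\,n+(2-a)\,(\tau^2+\xi^2).
\]

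The crux is the inequality $\tau^2+\xi^2\ge1$. Since $\{e_1,\dots,e_n,\nu\}$ is an orthonormal basis of $T_{\tilde x}N$, and both $E_{n+1}$ and the unit vector $P\in T_{\tilde x}N$ obtained from $D\r_q$ under the inclusion $T_x\Si\hookrightarrow T_{\tilde x}N$ of the $\Si$-factor are unit, one has $\tau^2=1-\lan E_{n+1},\nu\ran^2$ and $\xi^2=1-\lan P,\nu\ran^2$. From $\nu=v^{-1}(-Du+E_{n+1})$ together with $P\perp E_{n+1}$ one reads off $\lan E_{n+1},\nu\ran=v^{-1}$ and $\lan P,\nu\ran=-v^{-1}\lan D\r_q,Du\ran$, whence
\[
\tau^2+\xi^2=2-v^{-2}\big(1+\lan D\r_q,Du\ran^2\big)\ \ge\ 2-v^{-2}\big(1+|Du|^2\big)=1,
\]
using $|D\r_q|=1$ and Cauchy--Schwarz (equivalently, $E_{n+1}$ and $P$ span a $2$-plane $\Pi$ and $\tau^2+\xi^2=2-|\mathrm{proj}_\Pi\nu|^2\ge2-|\nu|^2=1$). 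Since $2-a>0$, combining with the previous display gives $\De\bar{\r}_q^2\ge a\,n+(2-a)=2+(n-1)a=2+2(n-1)K\r_q\cot(K\r_q)$, which is the assertion.

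I expect the main obstacle to be precisely this last mechanism. The tempting shortcut of applying the Hessian comparison theorem to $\bar{\r}^2$ directly inside $N$ only produces $2+2(n-1)K\bar{\r}_q\cot(K\bar{\r}_q)$, which is weaker and in fact becomes vacuous once $\bar{\r}_q\ge\f{\pi}{2K}$ --- and $\bar{\r}_q\ge\r_q$ may well be that large even for small $\r_q$. One must use the flatness of the $\R$-factor to obtain $\r_q$, not $\bar{\r}_q$, on the right-hand side, after which the exact constant drops out of $\tau^2+\xi^2\ge1$; it is the elementary bound $\lan D\r_q,Du\ran^2\le|Du|^2$ that makes the constant come out precisely as stated. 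A minor point to watch is that $a=2K\r_q\cot(K\r_q)$ really lies in $(0,2)$ on the whole range $0<K\r_q<\f{\pi}2$, so that the factor $2-a$ is positive when the bound $\tau^2+\xi^2\ge1$ is invoked.
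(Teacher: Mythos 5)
Your proof is correct and takes essentially the same route as the paper's: minimality reduces $\De\bar{\r}_q^2$ to a sum of ambient Hessians, the product structure isolates the flat $\R$-direction so that only $\mathrm{Hess}_\Si(\r_q^2)$ enters, and the Hessian comparison theorem together with the elementary fact $K\r_q\cot(K\r_q)\le 1$ (equivalently your $a\in(0,2)$) yields the stated constant. The only difference is bookkeeping: you sum the ambient Hessian over a tangent frame of $M$ and use $\tau^2+\xi^2=2-|\mathrm{proj}_\Pi\nu|^2\ge1$, whereas the paper equivalently computes $\De_N\bar{\r}_q^2-\overline{\mathrm{Hess}}_{\bar{\r}_q^2}(\nu,\nu)$ and decomposes $Du$ into radial and tangential parts, handling $Du=0$ as a separate (trivial) case that your formulation does not need.
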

\begin{proof}
By the Hessian comparison theorem, for any $\xi\bot\f{\p}{\p\r_q}$ we have
$$\mathrm{Hess}_{\r_q}(\xi,\xi)\ge K\cot{(K\r_q)}|\xi|^2.$$
Let $\{e_i\}_{i=1}^n$ be a local orthonormal frame field of $M$. Note that $M$ is minimal, we obtain
\begin{equation}\aligned\label{Debr2}
\De\bar{\r}_q^2=&\sum_{i=1}^n\left(\na_{e_i}\na_{e_i}\bar{\r}_q^2-\left({\na_{e_i}e_i}\right)\bar{\r}_q^2\right)\\
=&\sum_{i=1}^n\left(\bn_{e_i}\bn_{e_i}\bar{\r}_q^2-\left({\bn_{e_i}e_i}\right)\bar{\r}_q^2\right)
+\sum_{i=1}^n\left({\bn_{e_i}e_i}-\na_{e_i}e_i\right)\bar{\r}_q^2\\
=&\De_{N}\bar{\r}_q^2-\overline{\mathrm{Hess}}_{\bar{\r}_q^2}(\nu,\nu)\\
=&\De_{\Si}\r_q^2+2-\f1{v^2}\mathrm{Hess}_{\r_q^2}(Du,Du)-\f{2}{v^2}.
\endaligned
\end{equation}
If $Du\neq0$, we set $(Du)^T=Du-\left\lan Du,\f{\p}{\p\r_q}\right\ran \f{\p}{\p\r_q}$. Let $\{E_\a\}_{\a=1}^{n-1}\bigcup\f{\p}{\p\r_q}$ be an orthonormal basis of $T\Si$ with $E_1=(Du)^T\left|(Du)^T\right|^{-1}$. Combining $\mathrm{Hess}_{\r_q^2}\left(E_\a,\f{\p}{\p\r_q}\right)=0$ and $\mathrm{Hess}_{\r_q^2}\left(\f{\p}{\p\r_q},\f{\p}{\p\r_q}\right)=2$ give
\begin{equation}\aligned
\mathrm{Hess}_{\r_q^2}(Du,Du)=\mathrm{Hess}_{\r_q^2}\left((Du)^T,(Du)^T\right)+2\left\lan Du,\f{\p}{\p\r_q}\right\ran^2.
\endaligned
\end{equation}
Hence
\begin{equation}\aligned
\De\bar{\r}_q^2=&\sum_\a\mathrm{Hess}_{\r_q^2}(E_\a,E_\a)+4-\f1{v^2}\mathrm{Hess}_{\r_q^2}\left((Du)^T,(Du)^T\right)-\f2{v^2}\left\lan Du,\f{\p}{\p\r_q}\right\ran^2-\f{2}{v^2}\\
=&\sum_\a\mathrm{Hess}_{\r_q^2}(E_\a,E_\a)+2-\f{\left|(Du)^T\right|^2}{v^2}\mathrm{Hess}_{\r_q^2}(E_1,E_1)+\f2{v^2}\left|(Du)^T\right|^2\\
\ge&2(n-2)K\r_q\cot(K\r_q)+2+\left(2-2\f{\left|(Du)^T\right|^2}{v^2}\right)K\r_q\cot(K\r_q)+\f2{v^2}\left|(Du)^T\right|^2\\
\ge&2(n-1)K\r_q\cot(K\r_q)+2.
\endaligned
\end{equation}
If $|Du|=0$, clearly $\De\bar{\r}_q^2=\De_{\Si}\r_q^2\ge2(n-1)K\r_q\cot(K\r_q)+2$.
\end{proof}



Suppose that $\Si$ satisfies conditions C1), C2) and C3). For sufficiently small $\de>0$ depending only on $n,c,V_{\Si}$ and any fixed $q\in\p B_{\Om r}(p)$ with $\Om=\left(\f{\sqrt{c}}{\de}+1\right)$, by \cite{CGT} the injectivity radius at $q$ satisfies $i(q)\ge r$ and
$$d(p,x)\ge\f{\sqrt{c}}{\de}\ r,\qquad \mathrm{for\ any}\ \ x\in B_r(q).$$
Then by condition C3)
\begin{equation}\aligned
|R(x)|\le\f{\de^2}{r^2}, \qquad \mathrm{for\ any}\ \ x\in B_r(q).
\endaligned
\end{equation}
Hence $\r_q(x)$ is smooth for $x\in B_r(q)\setminus\{q\}$.
For $\tilde{q}=(q,u(q))\in M$, we
denote $\mathbb{B}_s(\tilde{q})=\{z\in N|\ \bar{\r}_{\tilde{q}}(z)<s\}$ and $D_s(\tilde{q})=\mathbb{B}_s(\tilde{q})\cap M$. If $\tilde{x}=(x,u(x))\in D_s(\tilde{q})$, then obviously $x\in B_s(q)$.

For any $t\in[0,1)$ we have $\cos t\ge1-t$, then
$$\left(\tan t-\f t{1-t}\right)'=\f1{\cos^2 t}-\f1{(1-t)^2}\le0.$$
So on $[0,1)$
$$\tan t\le\f t{1-t}.$$
Hence on $D_r(\tilde{q})$ we have
$$\De\bar{\r}_{\tilde{q}}^2(\tilde{x})\ge2+2(n-1)\left(1-\f\de r\r_q(x)\right)\ge2n-\f{2n\de\r_q(x)}{r}.$$

For any smooth function $f$ on $M$, combining the above inequalities we get
\begin{equation}\aligned\label{2nfDsbq}
&2n\int_{D_s(\tilde{q})}f\left(1-\f{\de\r_q}{r}\right)
\le\int_{D_s(\tilde{q})}f\De\bar{\r}_{\tilde{q}}^2=\int_{D_s(\tilde{q})}\div\left(f\na\bar{\r}_{\tilde{q}}^2\right)-\int_{D_s(\tilde{q})}\na f\cdot\na\bar{\r}_{\tilde{q}}^2\\
=&\int_{\p D_s(\tilde{q})}f\na\bar{\r}_{\tilde{q}}^2\cdot\f{\na\bar{\r}_{\tilde{q}}}{|\na\bar{\r}_{\tilde{q}}|}-\int_{D_s(\tilde{q})}\div\left((\bar{\r}_{\tilde{q}}^2-s^2)\na f\right)
+\int_{D_s(\tilde{q})}(\bar{\r}_{\tilde{q}}^2-s^2)\De f\\
=&2s\int_{\p D_s(\tilde{q})}f|\na\bar{\r}_{\tilde{q}}|+\int_{D_s(\tilde{q})}(\bar{\r}_{\tilde{q}}^2-s^2)\De f.
\endaligned
\end{equation}
Since
\begin{equation}\aligned
|\na\bar{\r}_{\tilde{q}}|^2=&\f1{4\bar{\r}_{\tilde{q}}^2}g^{ij}\p_i(\r_q^2+(u-u(q))^2)\cdot\p_j(\r_q^2+(u-u(q))^2)\\
=&\f1{\bar{\r}_{\tilde{q}}^2}\left(\r_q^2\left(1-\f{|Du\cdot D\r_q|^2}{v^2}\right)+2\r_q(u-u(q))\f{Du\cdot D\r_q}{v^2}+(u-u(q))^2\f{|Du|^2}{v^2}\right)\\
\le&\f1{\bar{\r}_{\tilde{q}}^2}\left(\r_q^2+(u-u(q))^2\f{1}{v^2}+(u-u(q))^2\f{|Du|^2}{v^2}\right)\\
=&1,
\endaligned
\end{equation}
we have
\begin{equation}\aligned
&\f{\p}{\p s}\left(s^{-n}\int_{D_s(\tilde{q})}\log v\right)=-n s^{-n-1}\int_{D_s(\tilde{q})}\log v+s^{-n}\int_{\p D_s(\tilde{q})}\f{\log v}{|\na\bar{\r}_{\tilde{q}}|}\\
\ge&-n s^{-n-1}\int_{D_s(\tilde{q})}\log v+s^{-n}\int_{\p D_s(\tilde{q})}\log v|\na\bar{\r}_{\tilde{q}}|\\
\ge&-n s^{-n-1}\int_{D_s(\tilde{q})}\log v+s^{-n}\f ns\int_{D_s(\tilde{q})}\left(1-\f{\de\r_q}{r}\right)\log v\\
\ge&-\f{n\de}{r}s^{-n}\int_{D_s(\tilde{q})}\log v.
\endaligned
\end{equation}
Let $\omega_n$ be the volume of the standard $n$-dimensional unit ball in Euclidean space. For $0<s\le r$ integrating the above inequality implies
\begin{equation}\aligned\label{mean_q}
\log v(\tilde{q})\le\f{e^{\f{n\de s}{r}}}{\omega_ns^{n}}\int_{D_s(\tilde{q})}\log v.
\endaligned
\end{equation}

We say that a function $f$ has \emph{at most linear growth} on $\Si$ if
$$\limsup_{x\rightarrow\infty}\f{|f(x)|}{\r(x)}<\infty.$$
and say that  $f$ has \emph{linear growth} on $\Si$ if
$$0<\limsup_{x\rightarrow\infty}\f{|f(x)|}{\r(x)}<\infty.$$
Denote $f_+=\max\{f,0\}$ and $f_-=\min\{f,0\}$.
\begin{theorem}\label{GEu}
Let $u$ be a minimal graphic function on a  complete Riemannian manifold $\Si$ which satisfies conditions C1), C2) and C3). Then we have gradient estimates
\begin{equation}\aligned\label{GradEstu}
|Du(x)|\le C_1e^{C_2r^{-1}\left(u(x)-\sup_{y\in B_{(\Om+1)r}(p)}u(y)\right)}
\endaligned
\end{equation}
for any $r>0$ and $x\in\p B_{\Om r}(p)$, where $C_1,C_2$ are positive constants depending only $n$, and $\Om$ is a constant depending only on $n,c,V_\Si$.
Moreover, if $u_+$ (or $u_-$) has at most linear growth, then $|Du|$
is uniformly bounded on all of  $\Si$.
\end{theorem}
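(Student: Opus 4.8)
My approach is to combine three things that are already available: (i) the Bochner-type identity \eqref{Logv}, which since $Ric\ge0$ and $|B|^2\ge0$ gives $\De\log v\ge|\na\log v|^2\ge0$, so $\log v$ is subharmonic on $M$; (ii) the mean value inequality \eqref{mean_q}; and (iii) the integral bound \eqref{DuK}. Fix $r>0$ and $x\in\p B_{\Om r}(p)$, and set $q=x$, $\tilde q=(q,u(q))$. Since $v=\sqrt{1+|Du|^2}$, equation \eqref{u}, the hypersurface $M$ up to a vertical translation, and the function $\bar{\r}_{\tilde q}$ are all unchanged by $u\mapsto u-u(q)$, I may assume $u(q)=0$. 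From $\bar{\r}_{\tilde q}^2(\tilde x)=\r_q^2(x)+u^2(x)$ the set $D_s(\tilde q)$ is contained in the graph over $B_s(q)\cap\{|u|<s\}$ for every $0<s\le r$; hence, applying \eqref{DuK} centred at $q$ (legitimate now that $u(q)=0$) with radius $s$ and $\be=1$, and bounding $Vol(B_{3s}(q))\le\om_n(3s)^n$ via \eqref{VolSi},
\begin{equation*}
\int_{D_s(\tilde q)}\log v\,d\mu\le\int_{B_s(q)\cap\{|u|<s\}}\log v\,d\mu\le C(n)\Big(1+s^{-1}\sup_{B_{3s}(q)}u\Big)\om_n s^n ,
\end{equation*}
where I used $\log v\ge0$ and $\sup_{B_{3s}(q)}u\ge u(q)=0$.

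Substituting this into \eqref{mean_q} with the choice $s=r/3$ cancels the factor $\om_n s^n$, while $e^{n\de s/r}\le e^n$, the inclusion $B_r(q)\subset B_{(\Om+1)r}(p)$, and undoing the normalization $u(q)=0$ give
\begin{equation*}
\log v(x)\le C(n)\Big(1+r^{-1}\big(\sup_{B_{(\Om+1)r}(p)}u-u(x)\big)\Big),
\end{equation*}
and exponentiating (with $|Du|\le v$) yields \eqref{GradEstu} with $C_1,C_2$ depending only on $n$. For the last assertion, replacing $u$ by $-u$ if necessary (still a minimal graphic function) and normalizing $u(p)=0$, I may assume $\sup_{B_R(p)}u\le aR+b$ for all $R\ge0$; then \eqref{GradEstu} reads $|Du(x)|\le C_1 e^{C_2(a(\Om+1)+b/r)}\,e^{-C_2 u(x)/r}$ on $\p B_{\Om r}(p)$, so everything reduces to bounding $-u(x)$ along these spheres. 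Writing $\phi(t):=\sup_{B_t(p)}|Du|$, the maximum principle for the subharmonic function $v$ on the extrinsic balls of $M$ shows $\phi$ is non-decreasing with $\phi(t)=\sup_{\p B_t(p)}|Du|$, and integrating $|Du|$ along a minimizing geodesic from $p$ to $x$ gives $u(x)\ge-\int_0^{\Om r}\phi(t)\,dt$. Feeding this back into \eqref{GradEstu}, taking the supremum over $x\in\p B_{\Om r}(p)$, and writing $R=\Om r$ converts \eqref{GradEstu} into a closed inequality for $\Phi(R):=\int_0^R\phi(t)\,dt$; exploiting that near a point at distance $R$ the manifold is almost flat at a scale comparable to $R$ — which is precisely what C2) and C3) provide — one aims to conclude $\Phi(R)=O(R)$, hence $\phi$ bounded, i.e.\ $|Du|$ uniformly bounded on $\Si$.

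The first half is essentially bookkeeping once (i)–(iii) are in hand; the only care needed there is the vertical-translation normalization that makes the re-centred version of \eqref{DuK} applicable. The step I expect to be the main obstacle is closing the bootstrap in the last part: the inequality one obtains is, roughly, $\phi(R)\lesssim\exp\big(C R^{-1}\int_0^R\phi(t)\,dt\big)$, which a priori permits $\phi$ to grow, and the Euclidean-type device of letting the ball radius tend to infinity at a fixed point is unavailable on $\Si$ because the almost-flat scale at a point grows only linearly in its distance to $p$. Extracting a genuinely uniform bound — rather than slow growth of $|Du|$ — is exactly where the quadratic curvature decay C3) and the Euclidean volume growth C2) (equivalently, the conical asymptotics of $\Si$) must enter in an essential way.
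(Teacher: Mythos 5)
Your derivation of the estimate \eqref{GradEstu} is essentially the paper's own proof: the same three ingredients (the Bochner formula \eqref{Logv} giving subharmonicity of $\log v$, the extrinsic mean value inequality \eqref{mean_q}, and the integral bound \eqref{DuK}), combined after normalizing the solution to vanish at the point under consideration; the paper normalizes with $u(x)-u$ where you use $u-u(x)$, which merely exchanges the two equivalent forms of the exponent, and your choice $s=r/3$ versus the paper's $s=r$ only changes the constants. Note that what this argument actually produces is an exponent $C_2r^{-1}\bigl(\sup_{B_{(\Om+1)r}(p)}u-u(x)\bigr)$, equivalently $C_2r^{-1}\bigl(u(x)-\inf_{B_{(\Om+1)r}(p)}u\bigr)$ for the other normalization, i.e.\ your version: the exponent $u(x)-\sup_{B_{(\Om+1)r}(p)}u$ printed in \eqref{GradEstu} is a sign slip (taken literally it is nonpositive and would give $|Du|\le C_1$ unconditionally, contradicting the examples of the last section), and indeed the paper's own displayed computation yields the $\inf$ form. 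So the first assertion of the theorem you have proved, by the paper's method.

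The gap is in the ``Moreover'' clause, and you have named it yourself: the inequality $\phi(R)\lesssim\exp\bigl(CR^{-1}\int_0^R\phi\bigr)$ does not exclude, say, $\phi(R)=R$, so your bootstrap along geodesics cannot close, and you supply no substitute. For comparison, the paper does not attempt any bootstrap: its entire treatment of this clause is the remark that one may replace $u$ by $-u$ in \eqref{GradEstu} and then ``let $r\rightarrow\infty$'', i.e.\ apply the estimate at each $x$ with the only admissible radius $r=\r(x)/\Om$ and send $\r(x)\rightarrow\infty$. Under a purely one-sided hypothesis, say $u_+\le a\r+b$, the exponent $C_2r^{-1}\bigl(\sup_{B_{(\Om+1)r}(p)}u-u(x)\bigr)$ stays bounded along $\partial B_{\Om r}(p)$ only if one also knows $-u(x)\le C\r(x)$, which is precisely the control on the other side of $u$ that you were trying, and failing, to extract; the Euclidean device of fixing $x$ and letting the ball radius go to infinity (Bombieri--Giusti) is indeed unavailable here because \eqref{mean_q} caps the usable radius at $\r(x)/\Om$, exactly as you observe. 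So your proposal establishes the gradient estimate but not the boundedness statement under one-sided linear growth, and the missing step is not a routine bookkeeping issue: it is the passage from one-sided to two-sided linear control of $u$ at scale $\r(x)$, which the paper's one-sentence argument presupposes rather than proves, and which neither your ODE-type iteration nor a Harnack argument on $M$ (Lemma \ref{infsupmean} already requires $|Du|$ bounded, so that route is circular) supplies as stated.
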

\begin{proof}
For any $p\in\Si$ fixed, let $\Om=\left(\f{\sqrt{c}}{\de}+1\right)$ as before. For any $r>0$, $x\in\p B_{\Om r}(p)$, combining \eqref{mean_q} and \eqref{DuK} with $\be=1$ and $u(x)-u$ replacing $u$, we have
\begin{equation}\aligned
\log|Du(x)|\le&\log v(\tilde{x})\le\f{e^{n\de}}{\omega_nr^{n}}\int_{D_r(\tilde{x})}\log v\\
\le&11e^{n\de}\left(3+\f{\sup_{y\in B_{r}(x)}(u(x)-u(y))}{r}\right)\f{Vol(B_{3r})}{\omega_nr^n}\\
\le&11e^{n\de}\left(3+\f{\sup_{y\in B_{(\Om+1)r}(p)}(u(x)-u(y))}{r}\right)\f{Vol(B_{3r})}{\omega_nr^n},
\endaligned
\end{equation}
where $\tilde{x}=(x,u(x))$. Take $0<\de<1$, then invoking \eqref{VolSi} we obtain
\begin{equation}\aligned
\log|Du(x)|\le11(3e)^{n}\left(3+\f{u(x)-\sup_{y\in B_{(\Om+1)r}(p)}u(y)}{r}\right).
\endaligned
\end{equation}
Thus \eqref{GradEstu} holds. Obviously, we can substitute $u$ in
\eqref{GradEstu} by $-u$. Hence if $u_+$ or $u_-$ has at most linear
growth, then letting $r\rightarrow\infty$ implies that $|Du|$ is
uniformly bounded on all of $\Si$.
\end{proof}


\begin{lemma}\label{infsupmean}
Let $\Si$ be a Riemannian manifold with conditions C1), C2) C3), and
$u$ be a smooth  solution to \eqref{u} on $\Si$ with  at most linear growth. Then for any nonnegative subharmonic(superharmonic) function $\varphi^+(\varphi^{-})$ on $M$, we have
$$\sup_{D_{\la r}(q)}\varphi^+\le \f {C_1}{r^n}\int_{D_r(q)}\varphi^+\qquad and \qquad \inf_{D_{\la r}(q)}\varphi^-\ge \f {C_2}{r^n}\int_{D_r(q)}\varphi^-$$
for arbitrary $r>0$ and some constant $0<\la<1$ independent of $r$. Here $C_1,C_2>0$ are constants depending only on $n,c,V_{\Si},\la$.
\end{lemma}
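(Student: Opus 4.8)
The plan is to reduce the statement to the classical De Giorgi--Nash--Moser interior maximum estimate, which on the graph $M$ requires two scale-invariant inputs: a Sobolev inequality and Euclidean-type volume bounds for the domains $D_r(q)$; both are obtained by transferring the corresponding facts from $\Si$, once one knows that $M$ is uniformly quasi-isometric to $\Si$. The key is the uniform gradient bound: since $u$ has at most linear growth, Theorem~\ref{GEu} applied to $u$ and to $-u$ gives $|Du|\le\La$ on all of $\Si$. Identify $M$ with $\Si$ by the graph projection $\pi\colon(x,u(x))\mapsto x$. From $g=\si+du\otimes du$ the eigenvalues of $g$ relative to $\si$ are $1$ (with multiplicity $n-1$) and $1+|Du|^2$, so $\si\le g\le(1+\La^2)\si$; moreover $d\mu=\sqrt{1+|Du|^2}\,d\mu_\Si$ with $1\le\sqrt{1+|Du|^2}\le\sqrt{1+\La^2}$, and integrating $|Du|$ along a minimizing $\Si$-geodesic gives $|u(x)-u(q)|\le\La\,\r_q(x)$, whence $B_{s/\sqrt{1+\La^2}}(q)\subset\pi\big(\mathbb{B}_s(\tilde q)\cap M\big)\subset B_s(q)$. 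Together with \eqref{VolSi} this yields the uniform bounds $c(n,\La)\,s^n\le\mathrm{Vol}\big(D_s(q)\big)\le C(n)\,s^n$ and the volume-doubling property, uniformly in $s>0$ and $q$.

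Next, the Sobolev inequality. By C1) and C2), $\Si$ carries the global Sobolev inequality
\[
\Big(\int_\Si|f|^{\frac{2n}{n-2}}\,d\mu_\Si\Big)^{\frac{n-2}{n}}\le C_S(n,V_\Si)\int_\Si|Df|^2\,d\mu_\Si,\qquad f\in C_0^\infty(\Si)
\]
(the Li--Yau heat-kernel upper bound and $\mathrm{Vol}(B_r)\ge V_\Si r^n$ give $p_t(x,x)\le Ct^{-n/2}$, which is equivalent to it). Writing $\tilde\phi=\phi\circ\pi^{-1}$ and using $|\na\phi|^2\ge(1+\La^2)^{-1}|D\tilde\phi|^2$ together with $d\mu_\Si\le d\mu\le\sqrt{1+\La^2}\,d\mu_\Si$, the same inequality transfers to $(M,g)$ with a constant depending only on $n$, $V_\Si$, $\La$. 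Restricting to $\phi\in C_0^\infty\big(D_r(q)\big)$ and inserting $\mathrm{Vol}\big(D_r(q)\big)\asymp r^n$ puts it into the scale-invariant form
\[
\Big(r^{-n}\int_{D_r(q)}|\phi|^{\frac{2n}{n-2}}\,d\mu\Big)^{\frac{n-2}{n}}\le C\,r^{2-n}\int_{D_r(q)}|\na\phi|^2\,d\mu ,
\]
with $C$ uniform in $r>0$ and $q$.

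Finally, Moser iteration. For a nonnegative subharmonic $\varphi^+$ one tests $\De\varphi^+\ge0$ against $\e^2(\varphi^+)^{\be}$ ($\be\ge1$), where $\e\equiv1$ on $D_{r''}(q)$, $\e$ is supported in $D_{r'}(q)$ ($r''<r'$) and $|\na\e|\le(r'-r'')^{-1}$; the local Sobolev inequality and Young's inequality then give a reverse-H\"older step
\[
\Big(r^{-n}\int_{D_{r''}(q)}(\varphi^+)^{\chi p}\Big)^{\frac{1}{\chi p}}\le\Big(\frac{Cr^2}{(r'-r'')^2}\Big)^{\frac{1}{p}}\Big(r^{-n}\int_{D_{r'}(q)}(\varphi^+)^{p}\Big)^{\frac{1}{p}},\qquad \chi=\tfrac{n}{n-2}>1,
\]
which one iterates as $p\to\infty$ over a geometric sequence of radii decreasing from $r$ to $\la r$, obtaining $\sup_{D_{\la r}(q)}\varphi^+\le C\big(r^{-n}\int_{D_r(q)}(\varphi^+)^2\big)^{1/2}$. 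The passage to the $L^1$ norm is the usual interpolation $\big(r^{-n}\int(\varphi^+)^2\big)^{1/2}\le(\sup\varphi^+)^{1/2}\big(r^{-n}\int\varphi^+\big)^{1/2}$, with $\sup\varphi^+$ absorbed by iterating over a shrinking family of radii; this gives $\sup_{D_{\la r}(q)}\varphi^+\le\f{C_1}{r^n}\int_{D_r(q)}\varphi^+$ for $C_1=C_1(n,c,V_\Si,\La,\la)$. If $\varphi^-$ is nonpositive and superharmonic, then $-\varphi^-$ is nonnegative and subharmonic, so applying the estimate just proved to $-\varphi^-$ yields $\inf_{D_{\la r}(q)}\varphi^-\ge\f{C_2}{r^n}\int_{D_r(q)}\varphi^-$.

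The main obstacle is not any single step but keeping \emph{every} constant scale-invariant: this is what forces the use of the \emph{global} volume bounds \eqref{VolSi} (so that $\mathrm{Vol}(D_r(q))\asymp r^n$ with $r$-independent constants) and of the \emph{global} Sobolev inequality on $\Si$, not merely local versions, and it rests on the uniform gradient bound of Theorem~\ref{GEu} --- precisely the content of ``at most linear growth'' --- which makes $g$ uniformly equivalent to $\si$ and so legitimizes transplanting the analysis from $\Si$ to $M$; without a global gradient bound the comparison collapses. (One may note that the mere \emph{center-value} inequality $\varphi^+(\tilde q)\le\f{C}{r^n}\int_{D_r(\tilde q)}\varphi^+$ follows at once from \eqref{2nfDsbq}, whose derivation used only subharmonicity of the test function, wherever that formula is valid; it is the upgrade to a genuine supremum over $D_{\la r}$ that requires the Sobolev-based iteration.) The $L^2\!\to\!L^1$ passage, though routine, is the other point where some care is needed.
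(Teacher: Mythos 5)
Your proof of the first inequality is sound and is essentially the paper's own (very brief) argument: the at-most-linear growth gives the global bound $|Du|\le\Lambda$ via Theorem \ref{GEu}, the graph projection makes $(M,g)$ uniformly bi-Lipschitz to $(\Si,\si)$, so the Euclidean-type volume bounds \eqref{VolSi}, volume doubling and the Sobolev inequality transfer to $M$ with scale-invariant constants, and Moser iteration plus the standard $L^2\to L^1$ absorption gives $\sup_{D_{\la r}(q)}\varphi^+\le C_1 r^{-n}\int_{D_r(q)}\varphi^+$.

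There is, however, a genuine gap in your treatment of the second inequality. In the lemma $\varphi^-$ is a \emph{nonnegative} superharmonic function (the word ``nonnegative'' governs both cases), and the asserted bound $\inf_{D_{\la r}(q)}\varphi^-\ge C_2 r^{-n}\int_{D_r(q)}\varphi^-$ is Moser's \emph{weak Harnack inequality} for nonnegative supersolutions. This is how the lemma is used later: it is applied to the nonnegative superharmonic function $\phi_{\max}-\log v$ to get \eqref{meanphimax}, and combined with the sup-bound it yields the Harnack inequality for positive harmonic functions invoked in the proof of Theorem \ref{Liusub}. Your argument instead reads $\varphi^-$ as nonpositive and deduces the inequality by applying the sup-estimate to $-\varphi^-$; that proves a different (and, for the later applications, useless) statement and cannot be repaired by a sign flip, since $-\varphi^-$ is then nonpositive subharmonic and the first estimate says nothing about averages of a nonnegative supersolution from below. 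The weak Harnack half needs the extra analytic input the paper explicitly names alongside Sobolev, namely the Neumann--Poincar\'e inequality on $M$ (which transfers from $\Si$ by the same quasi-isometry you set up), together with the negative-exponent Moser iteration and a John--Nirenberg (or Bombieri--Giusti) argument to cross from negative to positive powers. With the Poincar\'e inequality and volume doubling on $M$ this is standard De Giorgi--Nash--Moser theory, but as written your proposal omits precisely this half of the machinery, so the second estimate is not established.
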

\begin{proof}
In the manifold $\Si$  we have the Sobolev  and
Neumann-Poincar$\acute{\mathrm{e}}$ inequalities. So we have those
inequalities also on the manifold $M$ by the boundedness of $|Du|$. Then we can use De Giorgi-Moser-Nash's theory and the volume doubling condition to obtain the mean value inequality(see \cite{M} for the details).
\end{proof}
Denote $|Du|_0=\sup_{x\in\Si}|Du(x)|$.
Now we want to use the above Lemma to deduce the mean value equalities
for the bounded gradient of $u$.
\begin{lemma}
Let $\Si$ be a Riemannian manifold with conditions C1), C2) C3), and
$u$ be a smooth  solution to \eqref{u} on $\Si$ with  at most linear growth. Then we have mean value equalities on both exterior balls and interior balls:
\begin{equation}\aligned\label{supDu1}
|Du|_0^2=\lim_{r\rightarrow \infty}\f1{Vol(D_r(z))}\int_{D_r(z)}|Du|^2d\mu,
\endaligned
\end{equation}
and
\begin{equation}\aligned\label{supDu2}
|Du|_0^2=\lim_{r\rightarrow \infty}\f1{Vol(B_r(x))}\int_{B_r(x)}|Du|^2d\mu_{\Si}.
\endaligned
\end{equation}
\end{lemma}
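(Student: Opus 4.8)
The plan is to establish the two mean value equalities \eqref{supDu1} and \eqref{supDu2} by combining the subharmonicity of $\log v$ (equivalently $v^2=1+|Du|^2$) with the mean value inequality of Lemma \ref{infsupmean}, and then using the fact that $v$ is bounded to push the mean value from small scales to infinity. First I would observe that since $u$ has at most linear growth, Theorem \ref{GEu} gives a uniform bound $|Du|\le|Du|_0<\infty$, so $v$ is bounded and the Sobolev / Neumann–Poincaré machinery on $M$ applies. From \eqref{Logv} we have $\Delta\log v\ge|\nabla\log v|^2\ge0$, so $\log v$ is subharmonic on $M$; hence so is $v$ (and $v^2$, and thus $|Du|^2=v^2-1$, up to an additive constant). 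Set $L\triangleq|Du|_0^2$; by definition $L=\sup_M(v^2-1)$, and clearly $\frac1{Vol(D_r(z))}\int_{D_r(z)}|Du|^2\le L$ for every $r$, so the content is the reverse inequality in the limit.

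For the interior-ball statement \eqref{supDu1}, the idea is to pick points $z_k\in M$ along which $|Du(z_k)|^2\to L$, apply the mean value inequality of Lemma \ref{infsupmean} to the nonnegative subharmonic function $\varphi^+=|Du|^2$ centered at $z_k$, so that for a fixed small radius $\rho$, $|Du(z_k)|^2\le\sup_{D_{\lambda\rho}(z_k)}|Du|^2\le\frac{C_1}{\rho^n}\int_{D_\rho(z_k)}|Du|^2$, and then combine with the volume comparison (intrinsic volumes of $M$ are comparable to those of $\Si$, which satisfies \eqref{VolSi} and volume doubling) to get $|Du(z_k)|^2\le C\,\frac1{Vol(D_\rho(z_k))}\int_{D_\rho(z_k)}|Du|^2$. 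Since $|Du|^2\le L$ everywhere, this forces the average of $L-|Du|^2$ over $D_\rho(z_k)$ to be small; a covering / volume-doubling argument then upgrades smallness of the average on $D_\rho(z_k)$ to smallness of the average on $D_R(z)$ for any fixed $z$ once $R$ is large, because the contribution of the annulus $D_R(z)\setminus D_{R_0}(z)$ dominates and within it we can cover by controlled-overlap balls of radius $\rho$ on each of which $L-|Du|^2$ is (in average) small — more cleanly, since $L-|Du|^2\ge0$ is \emph{superharmonic}, its average over $D_R(z)$ is monotone in a suitable sense and controlled by its infimum-type behavior, which tends to $0$ by the Harnack/mean value inequality $\inf_{D_{\lambda r}(q)}\varphi^-\ge\frac{C_2}{r^n}\int_{D_r(q)}\varphi^-$ applied to $\varphi^-=|Du|^2-L\le0$. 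That last observation is in fact the crux: applying the superharmonic half of Lemma \ref{infsupmean} to $\varphi^-=|Du|^2-L$ directly gives $\inf_{D_{\lambda r}(q)}(|Du|^2-L)\ge\frac{C_2}{r^n}\int_{D_r(q)}(|Du|^2-L)\ge C_2\omega_n'\big(\frac1{Vol(D_r(q))}\int_{D_r(q)}|Du|^2-L\big)$ up to volume-comparison constants; letting $r\to\infty$ and using that $\sup_M|Du|^2=L$ so the left side tends to $0$ from below, we conclude $\frac1{Vol(D_r(q))}\int_{D_r(q)}|Du|^2\to L$, which is \eqref{supDu1}.

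For the exterior-ball equality \eqref{supDu2} I would run the same argument but with the extrinsic balls $B_r(x)\subset\Si$ in place of intrinsic balls $D_r(z)\subset M$, using \eqref{mean_q} and the computation \eqref{2nfDsbq} (which is exactly the reverse-mean-value / monotonicity estimate on the sets $D_s(\tilde q)$ defined via $\bar\rho_{\tilde q}$, whose projections to $\Si$ are the balls $B_s(q)$); since $|\nabla\bar\rho_{\tilde q}|\le1$ was already verified, one gets a monotonicity formula for $s^{-n}\int_{D_s(\tilde q)}\log v$ with an error controlled by $\delta/r$, and the same structure holds for $v^2$ in place of $\log v$ because $v^2$ is also subharmonic on $M$. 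Together with \eqref{VolSi} and the fact that $Vol(D_s(\tilde q))$ and $Vol(B_s(q))$ are comparable (indeed asymptotically equal after rescaling, since $|Du|$, though bounded, need not vanish — but boundedness suffices for the two-sided comparison), the same limiting argument yields \eqref{supDu2}; alternatively one deduces \eqref{supDu2} from \eqref{supDu1} by noting $\int_{B_r(x)}|Du|^2d\mu_\Si=\int_{\pi^{-1}(B_r(x))}\frac{|Du|^2}{v}d\mu$ and that $\pi^{-1}(B_r(x))$ is sandwiched between two intrinsic balls $D_{c_1r}$ and $D_{c_2r}$, with the factor $v$ between $1$ and a constant. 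The main obstacle I anticipate is the bookkeeping of the two different notions of balls and volume elements (intrinsic $d\mu$ on $M$ versus $d\mu_\Si$ on $\Si$, and extrinsic balls $D_s(\tilde q)$ versus geodesic balls on $M$), and making sure the constants in the mean value and volume-comparison inequalities do not degenerate as $r\to\infty$; the subharmonicity input and the Harnack-type inequality of Lemma \ref{infsupmean} do all the analytic work, so no new estimate is needed beyond careful limiting.
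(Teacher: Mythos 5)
Your overall strategy is the paper's (subharmonicity of $\log v$ and $|Du|^2=e^{2\log v}-1$ via \eqref{Logv}, the mean value / weak Harnack inequalities of Lemma \ref{infsupmean}, then projection and volume comparison for \eqref{supDu2}), but the step you yourself call the crux is misstated in a way that breaks the argument. The function $\varphi^-=|Du|^2-L$ is nonpositive and \emph{sub}harmonic, so it is not admissible in the superharmonic half of Lemma \ref{infsupmean}; the inequality you write, $\inf_{D_{\lambda r}(q)}(|Du|^2-L)\ge\frac{C_2}{r^n}\int_{D_r(q)}(|Du|^2-L)$, is equivalent (multiply by $-1$) to $\sup_{D_{\lambda r}(q)}(L-|Du|^2)\le\frac{C_2}{r^n}\int_{D_r(q)}(L-|Du|^2)$, i.e.\ a sub-mean-value inequality for the \emph{super}harmonic function $L-|Du|^2$, which neither follows from the lemma nor is true in general (a nonnegative superharmonic function can be large somewhere while having tiny average). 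Your limiting claim is also false: $\inf_{D_{\lambda r}(q)}(|Du|^2-L)\to\inf_M|Du|^2-L$ as $r\to\infty$, which need not be $0$; the quantity that tends to $0$ because $\sup_M|Du|^2=L$ is $\inf_{D_{\lambda r}(q)}(L-|Du|^2)=L-\sup_{D_{\lambda r}(q)}|Du|^2$. And even granting your display and your limit, the inequality points the wrong way: it would only give that the limsup of the averages is $\le L$, which is trivial. (Your first attempt is also quantitatively unjustified: since the constant $C_1$ in Lemma \ref{infsupmean} is not the reciprocal of the volume, $|Du(z_k)|^2\le\frac{C_1}{\rho^n}\int_{D_\rho(z_k)}|Du|^2$ does not force the average of $L-|Du|^2$ over $D_\rho(z_k)$ to be small.)

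The repair is exactly the paper's argument: apply the weak Harnack inequality to the \emph{nonnegative superharmonic} deficit, i.e.\ to $w=L-|Du|^2\ge0$ (the paper uses $\phi_{\max}-\log v$ and then transfers to $|Du|^2$ by boundedness). Lemma \ref{infsupmean} then gives $\frac{C_2}{r^n}\int_{D_r(q)}w\le\inf_{D_{\lambda r}(q)}w\le L-\sup_{D_{\lambda r}(q)}|Du|^2\to0$ (equivalently, evaluate at centers $\tilde q_k$ with $|Du(\tilde q_k)|^2\to L$ and re-center, as in the paper), and combined with the lower volume bound $Vol(D_r(q))\ge\varepsilon_0 r^n$ --- which you should invoke explicitly rather than ``volume comparability'' --- this yields \eqref{supDu1}. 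Your reduction of \eqref{supDu2} to \eqref{supDu1}, using $d\mu=v\,d\mu_\Sigma$ with $1\le v\le\sqrt{1+|Du|_0^2}$, the sandwich $D_r(z)\subset\pi^{-1}(B_r(x))\subset D_{Cr}(z)$ with $C=\sqrt{1+|Du|_0^2}$, the bound $Vol(D_{Cr}(z))\le C'r^n$ and \eqref{VolSi}, is sound and is in substance the paper's projection/small-bad-set argument; so once the sign/orientation error in the first part is fixed, the proof goes through along the paper's lines.
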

\begin{proof}
Denote $\phi_{\max}\triangleq\sup_{z\in M}\log v(z)$.
If $u$ has  linear growth at most, for any $\tilde{q}\in M$ we have
\begin{equation}\aligned
\f {1}{r^n}\int_{D_r(\tilde{q})}\left(\phi_{\max}-\log v\right)\le C\left(\phi_{\max}-\log v(\tilde{q})\right),
\endaligned
\end{equation}
which implies for any $z\in M$
\begin{equation}\aligned\label{meanphimax}
\phi_{\max}=\lim_{r\rightarrow \infty}\f1{Vol(D_r(z))}\int_{D_r(z)}\log v<\infty.
\endaligned
\end{equation}
Since $e^{2\log v}-1=|Du|^2$ is a bounded subharmonic function on $M$,  we  obtain \eqref{supDu1}.

For any $0<\ep<|Du|_0$ and any fixed point $z=(x,u(x))$ there is an $r_0>0$ such that for any $r\ge r_0$
$$\f1{Vol(D_r(z))}\int_{D_r(z)}|Du|^2d\mu>|Du|_0^2-\ep^2.$$
Denote $\Om_r\triangleq\{(q,u(q))\in D_r(z)|\ |Du|^2(q)<|Du|_0^2-\ep\}$, then
\begin{equation*}\aligned
|Du|_0^2-\ep^2<&\f1{Vol(D_r(z))}\left(\int_{D_r(z)\setminus\Om_r}|Du|_0^2d\mu+\int_{\Om_r}|Du|^2d\mu\right)\\
\le&\left(1-\f{Vol(\Om_r)}{Vol(D_r(z))}\right)|Du|_0^2+\f{Vol(\Om_r)}{Vol(D_r(z))}\left(|Du|_0^2-\ep\right)\\
=&|Du|_0^2-\ep\f{Vol(\Om_r)}{Vol(D_r(z))},
\endaligned
\end{equation*}
which implies
$$Vol(\Om_r)\le\ep Vol(D_r(z)).$$
Let $\Om^T_r$ be the projection from $\Om_r$ to $\Si$ defined by
$$\{q\in\Si|\ (q,u(q))\in\Om_r\}=\{q\in\Si|\ (q,u(q))\in D_r(z),\ |Du|^2(q)<|Du|_0^2-\ep\}.$$
There exists a constant $C>1$ such that the projection of $D_{Cr}(z)$ contains $B_r(x)$ for any $r\ge r_0$ . By the definition of $D_{Cr}(z)$, it follows that
\begin{equation}\aligned
&\f1{Vol(B_r(x))}\int_{B_r(x)}|Du|^2d\mu_{\Si}\ge\f1{Vol(B_r(x))}\int_{B_r(x)\setminus\Om_{Cr}^T}\left(\sup_{\Si}|Du|^2-\ep\right) d\mu_{\Si}\\
\ge&\f1{Vol(B_r(x))}\int_{B_r(x)}\left(\sup_{\Si}|Du|^2-\ep\right) d\mu_{\Si}-\f1{Vol(B_r(x))}\int_{\Om_{Cr}^T}\left(\sup_{\Si}|Du|^2-\ep\right) d\mu_{\Si}\\
\ge&\sup_{\Si}|Du|^2-\ep-\f{Vol(\Om_{Cr})}{Vol(B_r(x))}\left(\sup_{\Si}|Du|^2-\ep\right)\\
\ge&\sup_{\Si}|Du|^2-\ep-\ep\f{Vol(D_{Cr}(z))}{Vol(B_r(x))}\left(\sup_{\Si}|Du|^2-\ep\right).
\endaligned
\end{equation}
Let $r\rightarrow\infty$, then $\ep\rightarrow0$ implies that \eqref{supDu2} holds.
\end{proof}

\begin{theorem}\label{Liusub}
If the minimal graphic function $u$ on a complete manifold with conditions C1), C2) and C3) has sub-linear growth for its negative part, namely,
$$\limsup_{x\rightarrow\infty}\f{|u_-(x)|}{\r(x)}=0,$$
where $u_-=\min\{u,0\}$. Then $u$ is a constant.
\end{theorem}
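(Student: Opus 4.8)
Since the hypothesis that $u_-$ is sublinear in particular makes it of at most linear growth, Theorem~\ref{GEu} gives a uniform bound $|Du|\le |Du|_0<\infty$ on all of $\Si$. Hence, as in Lemma~\ref{infsupmean}, the graph $M$ carries the Sobolev and Neumann--Poincar\'e inequalities and is volume doubling, with constants depending only on $n,c,V_\Si$, and the mean value inequalities of that lemma --- being local in nature --- hold on every ball $D_s(q)\subset M$ with constants independent of $s$. Applying both of them to a nonnegative harmonic function one obtains a scale-invariant Harnack inequality on $M$: there are $0<\la<1$ and $H\ge 1$, depending only on $n,c,V_\Si$, such that for all $s>0$, all $q\in M$ and every nonnegative harmonic $h$ on $D_s(q)$,
\[
\sup_{D_{\la s}(q)}h\ \le\ H\,h(q).
\]
This Harnack inequality will play here the role that the Cheng--Yau gradient estimate plays in S.~Y.~Cheng's theorem for harmonic functions.

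The first step is to promote the one-sided growth hypothesis to a two-sided one. After normalising $u(p)=0$ (which changes neither $Du$ nor the growth hypothesis), set $G(s):=\sup_{B_s(p)}|u_-|$; then $G$ is nondecreasing, $G(s)/s\to 0$, and $u\ge -G(\r)$ on $\Si$. Fix $R>0$. Since $\tilde x=(x,u(x))\in D_R(\tilde p)$ forces $x\in B_R(p)$ and hence $u(x)\ge -G(R)$, the function $h:=u+G(R)$ is nonnegative and harmonic on $D_R(\tilde p)$; the Harnack inequality on $D_{\la R}(\tilde p)$ (with $h(\tilde p)=G(R)$) then yields $\sup_{D_{\la R}(\tilde p)}u\le (H-1)\,G(R)$. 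On the other hand $|Du|\le|Du|_0$ makes $u$ Lipschitz along $\Si$, so $|u(x)|\le|Du|_0\,\r(x)$ and therefore $\bar{\r}_{\tilde p}(\tilde x)=\sqrt{\r^2(x)+u^2(x)}\le\sqrt{1+|Du|_0^2}\;\r(x)$; thus every $x$ with $\r(x)<\la R/\sqrt{1+|Du|_0^2}$ satisfies $\tilde x\in D_{\la R}(\tilde p)$. Writing $s=\la R/\sqrt{1+|Du|_0^2}$ we conclude $\sup_{B_s(p)}u\le (H-1)\,G\big(s\sqrt{1+|Du|_0^2}/\la\big)=o(s)$ as $s\to\infty$. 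Together with $u\ge -G(\r)=o(\r)$ this shows $u$ itself has sublinear growth, i.e. $|u(x)|=o(\r(x))$.

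It then remains to run a Caccioppoli estimate. Testing equation \eqref{u} against $u\eta^2$, where $\eta\equiv 1$ on $B_r(p)$, $\mathrm{supp}\,\eta\subset B_{2r}(p)$, $|D\eta|\le 2/r$, and using $1\le v\le\sqrt{1+|Du|_0^2}$, integration by parts and Cauchy--Schwarz give
\[
\int_{B_r(p)}|Du|^2\,d\mu_\Si\ \le\ \frac{C}{r^2}\int_{B_{2r}(p)}u^2\,d\mu_\Si ,\qquad C=C(|Du|_0).
\]
Since $u$ is sublinear, $\sup_{B_{2r}(p)}u^2=o(r^2)$, while $Vol(B_{2r}(p))\le\omega_n(2r)^n$ and $Vol(B_r(p))\ge V_\Si r^n$, so $\frac{1}{Vol(B_r(p))}\int_{B_r(p)}|Du|^2\,d\mu_\Si\to 0$ as $r\to\infty$. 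By the mean value equality \eqref{supDu2} applied at $x=p$, this limit equals $|Du|_0^2$; hence $|Du|_0=0$ and $u$ is constant.

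The only genuinely new ingredient is the bootstrap of the growth hypothesis in the second paragraph; after that, the argument is the computation already used to prove \eqref{supDu2}, together with that mean value equality. I expect the main care to be needed in two places: first, checking that the Harnack constant $H$ coming from Lemma~\ref{infsupmean} does not depend on the scale $R$ --- this is precisely what keeps $(H-1)G(R)$ sublinear in $R$ --- and, second, that the mean value inequalities may be applied on $D_R(\tilde p)$ to $h=u+G(R)$, which is nonnegative only on that ball rather than on all of $M$.
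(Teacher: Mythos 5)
Your proposal is correct and follows essentially the same route as the paper: gradient bound from Theorem \ref{GEu}, the scale-invariant Harnack inequality from Lemma \ref{infsupmean} to upgrade the one-sided sublinear bound to two-sided sublinearity, and then a Caccioppoli estimate combined with the mean value equality to force $|Du|_0=0$. The only (cosmetic) difference is that you run the final step extrinsically on $\Si$ with \eqref{supDu2}, while the paper tests $\De u=0$ on intrinsic balls $D_r\subset M$ and uses \eqref{supDu1}; both are equivalent given the uniform gradient bound.
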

\begin{proof}
From Theorem \ref{GEu}, $|Du|$ is globally bounded. For any small $\de>0$, there is a $C_\de>0$ such that $u(x)\ge-C_\de-\de\r(x)$. Lemma \ref{infsupmean} implies a Harnack inequality for positive harmonic functions on $M$. Hence there is an absolute constant $C>1$ so that for any $r>0$ and $q\in M$ we have
$$\sup_{x\in D_{\la r}(q)}\left(u(x)-\inf_{D_r(q)}u+1\right)\le C\inf_{x\in D_{\la r}(q)}\left(u(x)-\inf_{D_r(q)}u+1\right)\le C\left(u(q)-\inf_{D_r(q)}u+1\right).$$
Thus
\begin{equation}\aligned
\sup_{x\in D_{\la r}(q)}u(x)\le Cu(q)+(C-1)\left(1-\inf_{D_r(q)}u\right)\le Cu(q)+(C-1)\left(1+C_\de+\de r\right)
\endaligned
\end{equation}
Therefore, for any $\ep>0$ there is $r_0>0$ such that for any $x$ with $\r(x)\ge r_0$ we have
$$|u(x)|<\ep\r(x).$$
For any $r\ge r_0$, let $\eta$ be a Lipschitz function on $M$ with $\mathrm{supp}\eta\subset D_{2r}$ with $\eta\big|_{D_r}\equiv1$ and $|\na\eta|\le\f 1r$ on $D_{2r}\setminus D_r$. Due to $\De u=0$, we see that
\begin{equation}\aligned
0=-\int_Mu\eta^2\De u=\int_M\na u\cdot\na(u\eta^2)=\int_M|\na u|^2\eta^2+2\int_M u\eta\na u\cdot\na\eta.
\endaligned
\end{equation}
From the Cauchy inequality we conclude that
\begin{equation}\aligned
\int_{D_r}|\na u|^2\le\int_M|\na u|^2\eta^2\le4\int_M |\na\eta|^2u^2\le\f4{r^2}\int_{D_{2r}\setminus D_r}u^2\le16\ep^2Vol(D_{2r}).
\endaligned
\end{equation}
With $|\na u|^2=\f{|Du|^2}{v^2}$, we get
\begin{equation}\aligned
\sup_{y\in \Si}|Du|^2(y)=&\lim_{r\rightarrow \infty}\f1{Vol(D_r)}\int_{D_r}|Du|^2d\mu\\
\le& \limsup_{r\rightarrow \infty}\f{1+|Du|_0^2}{Vol(D_r)}\int_{D_{r}}|\na u|^2d\mu\\
\le&16\ep^2(1+|Du|_0^2)\limsup_{r\rightarrow\infty}\f{Vol(D_{2r})}{Vol(D_r)}.
\endaligned
\end{equation}
Forcing $\ep\rightarrow0$ gives $|Du|\equiv0$, namely, $u$ is a constant.
\end{proof}
Actually, if $\Si$ is not Euclidean space, we can give a stronger theorem by replacing sub-linear growth by linear growth in the following section.

\section{A Liouville theorem via splitting for tangent cones at infinity}

Let $\Si$ be an $n$-dimensional manifold with conditions C1), C2), C3), and $u$ be a smooth linear growth solution to \eqref{u}.
We shall now derive pointwise estimates for the Hessian of $u$. We
first rewrite \eqref{u}, in order to apply elliptic regularity theory
as can found, for instance in \cite{GT,J}.  In a local coordinate, \eqref{u} is
\begin{equation}
\p_j\left(\sqrt{\si}\f{\si^{ij}u_i}{\sqrt{1+|Du|^2}}\right)=0,
\end{equation}
where $\sqrt{\si}=\sqrt{\det\si_{kl}}$. Take derivatives and set $w=\p_ku$ to turn this equation into
\begin{equation}
\p_j\left(\f{\sqrt{\si}}{\sqrt{1+|Du|^2}}\left(\si^{ij}-\f{u^iu^j}{1+|Du|^2}\right)w_i\right)+\p_jf^j_k=0,
\end{equation}
where
$$f^j_k=\f{u_i}{\sqrt{1+|Du|^2}}\p_k\left(\sqrt{\si}\si^{ij}\right)-\f12\sqrt{\si}\si^{ij}\f{u_iu_pu_q}{\left(1+|Du|^2\right)^{3/2}}\p_k\si^{pq}.$$

Define an operator $L$ on $C^2(\Si)$ by
$$Lf=\p_j\left(\f{\sqrt{\si}}{\sqrt{1+|Du|^2}}\left(\si^{ij}-\f{u^iu^j}{1+|Du|^2}\right)f_i\right),$$
then
$$Lw+\p_jf^j_k=0.$$
Since $\Si$ satisfies conditions C1), C2) and C3), by \cite{AC,GW,Ps} there
exist harmonic coordinates satisfying the estimates of \cite{JK}. That is, for a fixed point $p\in\Si$, there are positive constants $\a'=\a'(n,c,V_{\Si}),\th=\th(n,c,V_{\Si})\in(0,1)$ and $C=C(n,c,V_{\Si},\a')$ such that for any $q\in \p B_r(p)$ and $r>0$ there is a harmonic coordinate system $\{x_i:\ i=1,\cdots,n\}$ on $B_{\th r}(q)$ satisfying
\begin{equation}\aligned\label{harmcoor}
\De_\Si\, x_i\equiv0,\qquad (\si_{ij})_{n\times n}\ge\f1C,\qquad \si_{ij}+r|D\si_{ij}|+r^{1+\a'}[D\si_{ij}]_{\a',B_{\th r}(q)}\le C,
\endaligned
\end{equation}
where $0<\a'<1$, $\si_{ij}=\si\left(\f \p{\p x_i},\f \p{\p x_j}\right)$ and
$$[\varphi]_{\a',B_{\th r}(q)}=\sup_{x,y\in B_{\th r}(q),x\neq y}\f{|\varphi(x)-\varphi(y)|}{d(x,y)^{\a'}}.$$

\begin{lemma}
For any $q\in\p B_{r}(p)\subset\Si$, $r>0$ and $s\le \th r$ we have
\begin{equation}\aligned
\mathrm{osc}_{B_s(q)}Du\le Cs^\a r^{-\a},
\endaligned
\end{equation}
where $C=C(n,|Du|_0,c,V_{\Si})$ and $\a=\a(n,|Du|_0,c,V_{\Si})\le\a'<1$ are positive constants.
\end{lemma}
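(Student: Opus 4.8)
The plan is to regard the differentiated equation $Lw+\p_jf^j_k=0$ for $w=\p_ku$ as a uniformly elliptic divergence‑form equation with bounded measurable coefficients and a bounded divergence‑type right‑hand side, and then to invoke the De Giorgi--Nash--Moser interior H\"older estimate in a scale‑invariant form. The two ingredients that make this work are that $|Du|_0:=\sup_\Si|Du|<\infty$ by Theorem \ref{GEu} (here the linear growth of $u$ is used), and the scale‑invariant Jost--Karcher estimates \eqref{harmcoor} for the metric in harmonic coordinates.

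First I would verify, in the harmonic coordinate chart on $B_{\th r}(q)$ from \eqref{harmcoor}, that $L$ is uniformly elliptic with constants depending only on $n,|Du|_0,c,V_\Si$. Its coefficient matrix is $\f{\sqrt{\det\si_{kl}}}{\sqrt{1+|Du|^2}}g^{ij}$, where $(g^{ij})$ is the inverse of $g_{ij}=\si_{ij}+u_iu_j$; from $\si_{ij}X^iX^j\le g_{ij}X^iX^j\le(1+|Du|_0^2)\si_{ij}X^iX^j$ the matrix $(g^{ij})$ has eigenvalues in $[(1+|Du|_0^2)^{-1},1]$ relative to $(\si^{ij})$, and $(\si^{ij})$ is uniformly elliptic while $\sqrt{\det\si_{kl}}$ is bounded above and below by \eqref{harmcoor}. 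Similarly $|f^j_k|\le Cr^{-1}$ on $B_{\th r}(q)$, since each summand of $f^j_k$ is a bounded function of $Du$ times a first derivative of a metric coefficient and $r|D\si_{ij}|\le C$ by \eqref{harmcoor}; moreover each component $\p_ku$ of $Du$ is bounded by a constant multiple of $|Du|_0$.

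Next I would rescale to unit scale. Setting $\td\si_{ij}(y)=\si_{ij}(ry)$ and $\td u(y)=r^{-1}u(ry)$, the function $\td u$ solves the minimal surface equation for the metric $\td\si$ (equation \eqref{u} is invariant under scaling the metric by $r^{-2}$ together with scaling the coordinates by $r^{-1}$), one has $\sup|D_y\td u|=|Du|_0$, and $\td\si_{ij}$ is uniformly $C^{1,\a'}$‑bounded on the unit‑scale ball $B_\th$ \emph{independently of $r$} --- this is exactly the point of the $r$‑weights in \eqref{harmcoor}. Hence $\td w(y)=(\p_ku)(ry)$ solves $\td L\td w+\p_{y_j}\td f^j_k=0$ on $B_\th$ with $\td L$ uniformly elliptic and $\|\td f^j_k\|_{L^\infty(B_\th)}\le C$, all constants depending only on $n,|Du|_0,c,V_\Si$. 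The De Giorgi--Nash--Moser theory for divergence‑form equations (e.g.\ \cite{GT}) then yields, for concentric balls in the chart, $\mathrm{osc}_{B_\tau}\td w\le C\tau^\be\big(\|\td w\|_{L^\infty(B_\th)}+\|\td f\|_{L^\infty(B_\th)}\big)\le C\tau^\be$ for $\tau\le\th/2$, with $\be=\be(n,|Du|_0,c,V_\Si)$ and $C=C(n,|Du|_0,c,V_\Si)$. Undoing the rescaling gives $\mathrm{osc}_{B_s(q)}Du\le Cs^\be r^{-\be}$ for $s$ up to a fixed fraction of $\th r$; for the remaining range $s\le\th r$ one uses the trivial bound $\mathrm{osc}_{B_s(q)}Du\le 2\,|Du|_0$ after enlarging $C$. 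Setting $\a=\min\{\be,\a'\}$ finishes the proof, after the routine remark that geodesic and coordinate balls in the chart of \eqref{harmcoor} are comparable up to a fixed factor.

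The main obstacle is the scale‑invariant bookkeeping: one must check that differentiating \eqref{u} and then rescaling by $r$ leaves the ellipticity constants of $\td L$ and the $L^\infty$‑norm of $\td f^j_k$ bounded independently of $r$, which rests precisely on the powers of $r$ appearing in the Jost--Karcher estimates \eqref{harmcoor}. Once that is in place, the H\"older bound is a black‑box application of classical elliptic regularity, and its scaling behaviour forces the factor $s^\a r^{-\a}$.
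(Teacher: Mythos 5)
Your proposal is correct and follows essentially the same route as the paper: differentiate the minimal surface equation, use the scale-invariant Jost--Karcher harmonic-coordinate bounds \eqref{harmcoor} to get uniform ellipticity of $L$ and $|f^j_k|\le C r^{-1}$, and then obtain the scale-invariant H\"older oscillation decay for $\p_k u$. The only difference is presentational: the paper runs the De Giorgi--Nash--Moser argument by hand (weak Harnack inequality, Theorem 8.18 of \cite{GT}, combined with the volume lower bound \eqref{VolSi} and the iteration Lemma 8.23), whereas you invoke the packaged interior H\"older estimate after an explicit rescaling to unit scale, which amounts to the same thing.
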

\begin{proof}
For any fixed $s\le\f14\th r$, denote $M_4(s)=\sup_{B_{4s}(q)}w$, $m_4(s)=\inf_{B_{4s}(q)}w$, $M_1(s)=\sup_{B_{s}(q)}w$, $m_1(s)=\inf_{B_{s}(q)}w$. Then we have
$$L(M_4-w)=\p_if^i_k,\qquad L(w-m_4)=-\p_if^i\qquad \mathrm{on}\ B_{\th r}(q).$$
Due to \eqref{harmcoor}, it is not hard to find out that $|f^i_k|\le\f
Cr$ on $B_{\th r}(q)$. By the weak Harnack inequality (Theorem 8.18 of \cite{GT}), we have
\begin{equation}\aligned\label{Mw4}
s^{-n}\int_{B_{2s}(q)}\left(M_4(s)-w\right)d\mu_{\Si}\le C\left(M_4(s)-M_1(s)+\f sr\right),
\endaligned
\end{equation}
and
\begin{equation}\aligned\label{wm4}
s^{-n}\int_{B_{2s}(q)}\left(w-m_4(s)\right)d\mu_{\Si}\le C\left(m_1(s)-m_4(s)+\f sr\right),
\endaligned
\end{equation}
where $C=C(n,|Du|_0,c,V_{\Si})$.
Denote $\omega(s)=\mathrm{osc}_{B_s(q)}w=M_1(s)-m_1(s)$. Combining \eqref{VolSi}, \eqref{Mw4} and \eqref{wm4} gives
\begin{equation}\aligned
2^nV_{\Si}\ \omega(4s)\le\f{Vol(B_{2s}(q))}{s^n}\omega(4s)\le C\left(\omega(4s)-\omega(s)+\f {2s}r\right),
\endaligned
\end{equation}
which implies that there is a $\g\in(0,1)$ such that for all $s\in[0,\f14\th r]$
$$\omega(s)\le \g\omega(4s)+\f {2s}r.$$
By an iterative trick(see Lemma 8.23 in \cite{GT}), we complete the proof.
\end{proof}
The above Lemma implies the following H$\mathrm{\ddot{o}}$lder
continuity for the gradient of $u$.
\begin{corollary}
For any $q\in\p B_{r}(p)\subset\Si$, $r>0$ we have
\begin{equation}\aligned
{[Du]}_{\a,B_{\th r}(q)}\le C r^{-\a},
\endaligned
\end{equation}
where $C=C(n,|Du|_0,c,V_{\Si})$ and $\a=\a(n,|Du|_0,c,V_{\Si})<1$ are positive constants.
\end{corollary}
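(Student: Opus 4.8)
The plan is to upgrade the oscillation estimate of the preceding lemma, namely $\mathrm{osc}_{B_s(q)}Du \le C s^{\a} r^{-\a}$ for $s\le\th r$, into a H\"older seminorm bound on the fixed ball $B_{\th r}(q)$. The natural approach is a standard covering/chaining argument: given two points $x,y\in B_{\th r}(q)$, I would compare $|Du(x)-Du(y)|$ to $d(x,y)^{\a}$ by distinguishing two regimes, according to whether $d(x,y)$ is small or comparable to $\th r$.

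First I would fix $q\in\p B_r(p)$ and $x,y\in B_{\th r}(q)$, and set $\rho = d(x,y)$. In the regime where $\rho$ is a small fraction of $\th r$, say $\rho\le\frac14\th r$, one has $x,y\in B_{2\rho}(x)\subset B_{\th r}(q)$ (after enlarging the center ball slightly, which is harmless up to adjusting $\th$), and since $B_{2\rho}(x)$ is a geodesic ball of radius $\le\th r$ centered at a point whose distance from $p$ is comparable to $r$ (indeed $d(p,x)\in[(1-\th)r,(1+\th)r]$), the previous lemma applies on this ball with the same exponent $\a$ and gives
\begin{equation*}
|Du(x)-Du(y)|\le\mathrm{osc}_{B_{2\rho}(x)}Du\le C(2\rho)^{\a}r^{-\a}\le C'\rho^{\a}r^{-\a},
\end{equation*}
which is exactly the desired bound on this pair. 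One must be slightly careful that the lemma was stated for balls centered at points of $\p B_r(p)$; but inspecting its proof, the only place $r$ enters is through the scale $r^{-1}$ in the bound $|f^i_k|\le C/r$ coming from \eqref{harmcoor}, and \eqref{harmcoor} holds for all $q'$ with $d(p,q')$ comparable to $r$ after replacing $r$ by a fixed multiple of itself; so the lemma is valid, with adjusted constants, for balls of radius $\le c\,\th r$ centered anywhere in $B_{\th r}(q)$.

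In the complementary regime $\rho\ge\frac14\th r$, I would simply use that $Du$ is bounded: $|Du(x)-Du(y)|\le 2|Du|_0$, and since $\rho$ is comparable to $\th r$ we have $2|Du|_0 = 2|Du|_0 (\th r)^{-\a}(\th r)^{\a}\le C\rho^{\a}r^{-\a}$ with $C$ depending on $n,|Du|_0,c,V_{\Si}$ and $\th$ (recall $\th=\th(n,c,V_{\Si})$). Combining the two regimes and taking the supremum over $x\neq y$ in $B_{\th r}(q)$ yields ${[Du]}_{\a,B_{\th r}(q)}\le C r^{-\a}$ with $C=C(n,|Du|_0,c,V_{\Si})$, which is the assertion.

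The main obstacle, such as it is, is purely bookkeeping: one must verify that the oscillation lemma can legitimately be invoked on balls $B_{2\rho}(x)$ that are not centered on the geodesic sphere $\p B_r(p)$ but merely inside $B_{\th r}(q)$, and that the exponent $\a$ and the constant $C$ there do not deteriorate as the center ranges over $B_{\th r}(q)$ and the radius ranges over $(0,c\th r]$. This is handled by noting that $d(p,\cdot)$ is comparable to $r$ on $B_{\th r}(q)$ (uniformly in $r$, since $\th<1$ is fixed) and that the harmonic-coordinate estimates \eqref{harmcoor}, hence the weak Harnack inequality and the iteration of Lemma 8.23 of \cite{GT} used in the proof, are scale-invariant in $r$; therefore the constants depend only on $n,|Du|_0,c,V_{\Si}$, as claimed. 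No new analytic input is needed beyond what is already established.
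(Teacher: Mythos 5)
Your argument is correct and is essentially the intended deduction: the paper derives the corollary directly from the oscillation lemma by the standard two-regime (small versus comparable-to-$\th r$ separation) argument, and your bookkeeping point is handled simply by noting that any $x\in B_{\th r}(q)$ lies on $\p B_{r'}(p)$ with $r'=d(p,x)$ comparable to $r$, so the lemma applies verbatim at that center with $r'$ in place of $r$.
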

Standard elliptic regularity theory (the scale-invariant Schauder
estimates, see \cite{GT,J}) implies that there exists a constant $C=C(n,|Du|_0,c,V_{\Si},\a)$ such that for $q\in\p B_r(p)$
\begin{equation}\aligned\label{D2u}
\sup_{B_{\f{\th r}2}(q)}|D^2u|\le C r^{-2}\sup_{B_{\th r}(q)}|u|,
\endaligned
\end{equation}
and
\begin{equation}\aligned\label{D2au}
{[D^2u]}_{\a,B_{\f{\th r}2}(q)}\le C r^{-2-\a}\sup_{B_{\th r}(q)}|u|.
\endaligned
\end{equation}

\begin{theorem}
If $u$ is solution to \eqref{u} with linear growth,
then
\begin{equation}\aligned\label{rD2u0}
\limsup_{r\rightarrow\infty}\left(r\sup_{\p B_{r}(p)}|D^2u|\right)=0.
\endaligned
\end{equation}
\end{theorem}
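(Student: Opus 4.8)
The plan is to reduce \eqref{rD2u0} to a decay estimate for the second fundamental form $|B|$ of $M$, prove an \emph{integral} version of that decay by a Bochner/Green-function argument, and then upgrade it to the pointwise bound via the scale‑invariant Schauder estimates. Since $u$ has linear growth, Theorem \ref{GEu} gives $|Du|_0:=\sup_\Si|Du|<\infty$; combined with the standard graph identity $h_{ij}=v^{-1}(\mathrm{Hess}_\Si u)_{ij}$ (in a coordinate frame) and the boundedness of $v$, the norm $|D^2u|$ of the Hessian of $u$ is comparable, with constants depending only on $n$ and $|Du|_0$, to $|B|$. Hence it suffices to show $\limsup_{r\to\infty}\big(r\sup_{\partial B_r(p)}|B|\big)=0$. (If $\phi_{\max}:=\sup_M\log v=0$ then $u$ is constant and there is nothing to prove, so assume $\phi_{\max}>0$.)

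For the integral decay I would start from \eqref{Logv}: since $Ric_\Si\ge0$ it gives $\De_M\log v\ge|B|^2$, so $\psi:=\phi_{\max}-\log v$ is nonnegative and superharmonic on $M$ with $-\De_M\psi\ge|B|^2$, while the mean value equality \eqref{meanphimax} yields $\int_{D_r(\tilde p)}\psi\,d\mu=o(r^n)$. I would then pull the function $b=G^{\f1{2-n}}$ back from $\Si$ to $M$ along the projection $\pi$; using $\De_M(f\circ\pi)=\De_\Si f-v^{-2}\mathrm{Hess}_\Si f(Du,Du)$ (the computation behind \eqref{Debr2}), the normalization \eqref{Deb} with $|Db|\le1$, and the asymptotics \eqref{br}, \eqref{Hessb2}, one gets $|\na_M b|\le1$ and $|\De_M b|\le C/b$ for $b$ large, so that for a fixed cutoff $\e$ with $\e\equiv1$ on $[0,\tfrac12]$ and $\mathrm{supp}\,\e\subset[0,1]$ the function $\varphi=\e(b/r)$ satisfies $0\le\varphi\le1$, $\varphi\equiv1$ on $\{b\le r/2\}$, and $|\De_M\varphi|\le C/r^2$. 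Two integrations by parts (Green's identity, $\varphi$ compactly supported) then give
\begin{equation*}
\int_M\varphi\,|B|^2\,d\mu\ \le\ \int_M\varphi\,(-\De_M\psi)\,d\mu\ =\ -\int_M\psi\,\De_M\varphi\,d\mu\ \le\ \frac{C}{r^2}\int_{\{b\le r\}\cap M}\psi\,d\mu .
\end{equation*}
Because $b\le\r$ forces $B_{r/2}(p)\subset\{b\le r/2\}$, while $b\ge c'\r$ and the gradient bound force $\{b\le r\}\cap M\subset D_{Cr}(\tilde p)$, this produces $\int_{D_{r/2}(\tilde p)}|B|^2\,d\mu\le Cr^{-2}\,o(r^n)=o(r^{n-2})$, i.e. $\int_{D_s(\tilde p)}|B|^2\,d\mu=o(s^{n-2})$.

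To upgrade this to \eqref{rD2u0}, fix $q\in\partial B_r(p)$ and put $\tilde q=(q,u(q))$, $H=|B|(\tilde q)$. The estimate \eqref{D2u} (with $\sup_{B_{\th r}(q)}|u|\le Cr$) gives $H\le C/r$, so $rH$ is bounded, and \eqref{D2au} — together with the fact that the Christoffel corrections relating $|B|$ to the coordinate Hessian are $C^{\a}$ with norm $\le C/r$ by \eqref{harmcoor} — gives $[\,|B|\,]_{\a,B_{\th r/2}(q)}\le Cr^{-1-\a}$. Hence $|B|\ge H/2$ on $D_\de(\tilde q)$ for $\de=\min\{cr,\,(Hr^{1+\a}/2C)^{1/\a}\}$; since $Vol(D_\de(\tilde q))\ge c_1\de^n$ (by \eqref{VolSi} and $|Du|_0<\infty$) and $D_\de(\tilde q)\subset D_{Cr}(\tilde p)$, the integral decay forces $c_1H^2\de^n\le o(r^{n-2})$. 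A short case analysis — the branch $\de=cr$ cannot occur for $r$ large, as it would give simultaneously $rH=o(1)$ and $rH$ bounded below — then yields $rH=o(1)$, uniformly in $q\in\partial B_r(p)$, which is exactly \eqref{rD2u0}.

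The step I expect to be the real obstacle is the integral decay: obtaining $o(r^{n-2})$ rather than the cheap bound $O(r^{n-2})$ requires the \emph{second} integration by parts, which is what brings in the small quantity $\int\psi=o(r^n)$ and therefore leans essentially on the mean value equality \eqref{meanphimax} (hence on the global gradient bound and the mean value inequality on $M$). A secondary technical nuisance is that \eqref{harmcoor} controls only $D\si$ in $C^{\a'}$, so the Hölder norm of $|B|$ used in the last step must be read off from the Schauder estimate \eqref{D2au} for $u$; one cannot instead run Moser iteration on a Simons-type equation for $|B|$, since that would need a pointwise bound on $\na\bar R$, which is unavailable under C1)--C3).
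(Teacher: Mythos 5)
Your proposal is correct and follows essentially the same route as the paper: the integral decay $\int_{D_r(\tilde p)}|B|^2\,d\mu=o(r^{n-2})$ is obtained exactly as in the paper's proof (the Bochner formula \eqref{Logv}, a cutoff built from $b$ with $|\De b|\le C/b$ on $M$, two integrations by parts, and the mean value equality \eqref{meanphimax}), and the pointwise upgrade via \eqref{D2u}--\eqref{D2au} together with the volume lower bound \eqref{VolSi} is the paper's contradiction argument phrased quantitatively. The only cosmetic difference is that you carry $|B|$ through both steps (hence need the H\"older control of the Christoffel corrections from \eqref{harmcoor}), whereas the paper works with $|\mathrm{Hess}_u|$ in the integral estimate and with $|D^2u|$ in the Schauder step.
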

\begin{proof}
For the fixed point $p\in\Si$, there is a constant $C$ such that $\{(x,u(x))|\ x\in B_r(p)\}\subset D_{Cr}(p)$.
Let $b$ be the function defined on $\Si$ in section 2.
Together with \eqref{Deb} and the computation in \eqref{Debr2}, we have
\begin{equation}\aligned
\De b=&\De_{\Si}b-\f1{v^2}\mathrm{Hess}_{b}(Du,Du)\\
=&(n-1)\f{|Db|^2}b-\f1{2v^2b}\left(\mathrm{Hess}_{b^2}(Du,Du)-2\lan Db,Du\ran^2\right).
\endaligned
\end{equation}
By the properties of the function $b$, there is a large $r_0$ so that for any $x$ with $b(x)\ge r_0$ one has
$$|\De b|\le\f {n+2}b.$$

Denote $U_s=\{(x,u(x))\in M|\ x\in\{b<s\}\}$ for $s>0$. Let $\z$ be a nonnegative smooth function on $\R^+$ with $\mathrm{supp}\z\subset [0,2r]$, $\z\big|_{[0,r]}\equiv1$, $|\z'|\le\f Cr$ and $|\z''|\le\f C{r^2}$. Set
$$\eta(x)=\z(b(x))\qquad \mathrm{for\ any}\ x\in\Si.$$
Then $\eta$ is a smooth function with $\mathrm{supp}\eta\subset U_{2r}$, $\eta\big|_{U_{r}}\equiv1$, $|\na\eta|\le\f Cr$ and $|\De\eta|\le\f C{r^2}$ for sufficiently large $r$. Recalling \eqref{Logv} and $b(x)\le\r(x)$ we obtain
\begin{equation}\aligned\label{HessuB2}
\int_{B_r}|\mathrm{Hess}_u|^2d\mu_{\Si}\le&\int_{\{b<r\}}|\mathrm{Hess}_u|^2 d\mu_{\Si}\le \int_{U_{2r}}|\mathrm{Hess}_u|^2\eta d\mu\\
\le& C\int_{U_{2r}}|B|^2\eta\le C\int_{U_{2r}}\eta\De\left(\log v-\phi_{\max}\right)\\
=&C\int_{U_{2r}}\left(\log v-\phi_{\max}\right)\De\eta\le\f C{r^2}\int_{U_{2r}}\left(\phi_{\max}-\log v\right),
\endaligned
\end{equation}
where $\phi_{\max}=\sup_{z\in M}\log v(z)$ as before. Due to \eqref{meanphimax} we see that
\begin{equation}\aligned\label{BrHessu}
\lim_{r\rightarrow\infty}\left(\f1{r^{n-2}}\int_{B_r}|\mathrm{Hess}_u|^2d\mu_\Si\right)=0.
\endaligned
\end{equation}

From \eqref{D2u} we have
\begin{equation}\aligned
\sup_{\p B_{r}(p)}|D^2u|\le\f C r
\endaligned
\end{equation}
for some fixed $C=C(n,|Du|_0,c,V_{\Si},\a)$. If
\begin{equation}\aligned
\limsup_{r\rightarrow\infty}\left(r\sup_{\p B_{r}(p)}|D^2u|\right)>0,
\endaligned
\end{equation}
then there exist $\ep>0$, $r_i\rightarrow\infty$ and $q_i\in\p B_{r_i}(p)$ such that
\begin{equation}\aligned\label{riD2uqiep}
r_i|D^2u(q_i)|\ge\ep.
\endaligned
\end{equation}
By \eqref{D2au}, we conclude that
\begin{equation}\aligned
{[D^2u]}_{\a,B_{\f{\th r_i}2}(q_i)}\le C r_i^{-1-\a}.
\endaligned
\end{equation}
There is a sufficiently small $\de=\de(n,|Du|_0,c,V_{\Si},\a,\ep)\in(0,\f{\th}2)$ such that for any $y\in B_{\de r_i}(q_i)$
\begin{equation}
|D^2u(q_i)|-|D^2u(y)|\le{[D^2u]}_{\a,B_{\f{\th r_i}2}(q_i)}d(y,q_i)^\a\le C r_i^{-1-\a}\left(\de r_i\right)^\a<\f{\ep}{2r_i},
\end{equation}
which together with (\ref{riD2uqiep}) implies
\begin{equation}\aligned
|D^2u(y)|\ge\f{\ep}{2r_i}.
\endaligned
\end{equation}
Hence
\begin{equation}\aligned
&\f1{r_i^{n-2}}\int_{B_{(1+\de)r_i}(p)\setminus B_{(1-\de)r_i}(p)}|D^2u|^2d\mu_{\Si}\ge\f1{r_i^{n-2}}\int_{B_{\de r_i}(q_i)}|D^2u|^2d\mu_{\Si}\\
\ge&\f1{r_i^{n-2}}\int_{B_{\de r_i}(q_i)}\f{\ep^2}{4r_i^2}d\mu_{\Si}\ge\f{\ep^2}4\de^n V_{\Si}.\\
\endaligned
\end{equation}
Letting $r_i\rightarrow\infty$ deduces a contradiction to \eqref{BrHessu}. This completes the proof.
\end{proof}

Re-scale the metric by $\si\rightarrow r^{-2}\si$ and denote the inner
product, norm, gradient, Hessian and volume element for this re-scaled
metric by $\lan\cdot,\cdot\ran_r$, $|\cdot |_r$, $D^r$,
$\mathrm{Hess}^r$ and $d\mu_r$, respectively. Set
$\tilde{u}_r=r^{-1}u$ and let $B^r_s$ be the ball with radius $s$ and centered at $p$ for this re-scaled metric. Namely, $B^r_s$ is the ball with radius $s$ and centered at $p$ in $r^{-2}\Si=(\Si,r^{-2}\si,p)$. Then for any fixed $r_0>0$ \eqref{supDu2} implies
\begin{equation}\aligned\label{f1}
\lim_{r\rightarrow\infty}\f1{Vol(B_{r_0}^r)}\int_{B_{r_0}^r}|D^r\tilde{u}_r|_r^2d\mu_r=\sup_{\Si}|Du|^2,
\endaligned
\end{equation}
and \eqref{rD2u0} implies that there is a constant $C>0$ so that
\begin{equation}\aligned\label{f2}
\lim_{r\rightarrow\infty}\sup_{\p B_{r_0}^r}|\mathrm{Hess}^r_{\tilde{u}_r}|_r=0\qquad \mathrm{and}\qquad \sup_{\p B_{r_0}^s}|\mathrm{Hess}^s_{\tilde{u}_r}|_s<\f C{r_0}\quad \mathrm{for\ any}\ s>0.
\endaligned
\end{equation}

For any $x,y\in B^r_{r_0}(p)$ there exists a minimal normal geodesic
$\g_{xy}$ connecting $x$ and $y$ such that $\g_{xy}(0)=x$, $\g_{xy}(1)=y$ and
$|\dot{\g}_{xy}|=l_{xy}$ is a constant. Clearly, $l_{xy}\le2r_0$. Parallel
translating the vector $D^r\tilde{u}_r(x)$ along $\g_{xy}(t)$ produces a
unique vector at $y$ denoted by $D^r_{\pi_y}\tilde{u}_r(x)$. Denote by $C$  a  constant depending only on $n$. We have
\begin{equation}\aligned
\left|D^r_{\pi_y}\tilde{u}_r(x)-D^r\tilde{u}_r(y)\right|_r=&\left|D^r\tilde{u}_r(x)-D^r_{\pi_x}\tilde{u}_r(y)\right|_r\\
\le&C\cdot l_{xy}\int_0^{1}|\mathrm{Hess}_{\tilde{u}_r}|_r(\g_{xy}(t))dt.
\endaligned
\end{equation}
For any fixed $\ep>0$, together with \eqref{f1}\eqref{f2} and the above inequality we get
\begin{equation}\aligned\label{Drpixy}
\lim_{r\rightarrow\infty}\sup_{\g_{xy}\in B^r_{r_0}\setminus B^r_{\ep}}\left|D^r_{\pi_y}\tilde{u}_r(x)-D^r\tilde{u}_r(y)\right|_r=0,
\endaligned
\end{equation}
and
\begin{equation}\aligned\label{Drux0}
\lim_{r\rightarrow\infty}\inf_{x\in B^r_{r_0}\setminus B^r_{\ep}}|D^r\tilde{u}_r(x)|_r=\sup_{\Si}|Du|.
\endaligned
\end{equation}

Note that $|Du|_0\triangleq\sup_{\Si}|Du|>0$. Suppose $u(p)=\tilde{u}_r(p)=0$. Set
$$\G_r=\tilde{u}_r^{-1}(0)=\{x\in r^{-2}\Si|\ \tilde{u}_r(x)=0\}.$$
For any $x\in B_{r_0}^r$, there exists $x_0\in B_{2r_0}^r\cap\G_r$ so that
$$d_r(x,x_0)=d_r(x,\G_r)\triangleq\inf_{z\in\G_r}d_r(x,z).$$
Here $d_r$ is the distance function on $r^{-2}\Si$. Set the signed distance function $d_{\G_r}(x)=d_r(x,\G_r)$ for $\tilde{u}_r(x)\ge0$, and $d_{\G_r}(x)=-d_r(x,\G_r)$ for $\tilde{u}_r(x)\le0$.


\begin{lemma}\label{ury}
For any fixed $r_0>0$ there is
\begin{equation}
\lim_{r\rightarrow\infty}\sup_{y\in B_{r_0}^r} \left|\tilde{u}_r(y)-|Du|_0\cdot d_{\G_r}(y)\right|=0
\end{equation}
\end{lemma}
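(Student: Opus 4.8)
The plan is to prove, for points $y$ with $\tilde{u}_r(y)\ge 0$, the two one-sided estimates $\tilde{u}_r(y)\le |Du|_0\, d_{\G_r}(y)$ and $|Du|_0\, d_{\G_r}(y)\le \tilde{u}_r(y)+o(1)$ (uniformly in $y\in B_{r_0}^r$ as $r\to\infty$), and then to get the case $\tilde{u}_r(y)\le 0$ for free by applying the same to $-u$, which is again a minimal graphic function with the same $|Du|_0$, vanishes at $p$, has the same zero set $\G_r$, and has signed distance $-d_{\G_r}$. Write $K=|Du|_0>0$. Since the constant rescaling gives $|D^r\tilde{u}_r|_r=|Du|\le K$ at every point, $\tilde{u}_r$ is $K$-Lipschitz on $r^{-2}\Si$; as $\tilde{u}_r(p)=0$ this already yields $|\tilde{u}_r|\le K\ep$ on $\overline{B_\ep^r(p)}$ for every $\ep>0$. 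The first estimate is then immediate: for any $z\in\G_r$, $|\tilde{u}_r(y)|=|\tilde{u}_r(y)-\tilde{u}_r(z)|\le K\,d_r(y,z)$, hence $|\tilde{u}_r(y)|\le K\,d_r(y,\G_r)$, and since $\G_r$ is closed and nonempty ($p\in\G_r$) the signed distance $d_{\G_r}$ has the same sign as $\tilde{u}_r$; thus $0\le\tilde{u}_r(y)\le K\,d_{\G_r}(y)$ when $\tilde{u}_r(y)\ge 0$.

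For the reverse estimate I would argue by a gradient flow. Fix $\ep\in(0,r_0)$, put $A_r=B_{3r_0}^r(p)\setminus\overline{B_\ep^r(p)}$ and $m_r=\inf_{A_r}|D^r\tilde{u}_r|_r$; by \eqref{Drux0} applied with $3r_0$ in place of $r_0$ we have $m_r\to K$, so $m_r>K/2>0$ once $r$ is large, and the normalized downward flow $\dot\phi=-D^r\tilde{u}_r(\phi)/|D^r\tilde{u}_r(\phi)|_r$ is well defined on $A_r$ (recall $u$ is smooth). Given $y\in B_{r_0}^r$ with $\tilde{u}_r(y)>0$ (the case $\tilde{u}_r(y)=0$ being trivial), run $\phi$ from $\phi(0)=y$ while it stays in $A_r$. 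There $\frac{d}{ds}\tilde{u}_r(\phi(s))=-|D^r\tilde{u}_r(\phi(s))|_r\le -m_r$ and $d_r(\phi(s),y)\le s$, so $d_r(\phi(s),p)<3r_0$ for $s<2r_0$; hence by time $s\le\tilde{u}_r(y)/m_r\le Kr_0/m_r<2r_0$ the flow has either (a) reached the level set $\{\tilde{u}_r=0\}=\G_r$ while still in $A_r$, along a path of length $\le\tilde{u}_r(y)/m_r$, or (b) first hit $\partial B_\ep^r$ at a point $\phi(s_0)$ which, being reached before $\{\tilde u_r=0\}$, satisfies $0\le\tilde{u}_r(\phi(s_0))\le K\ep$ and (by the rate bound) $s_0\le\tilde{u}_r(y)/m_r$, and then $d_r(\phi(s_0),\G_r)\le d_r(\phi(s_0),p)=\ep$ because $p\in\G_r$. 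In both cases the unit-speed flow path from $y$ shows $d_{\G_r}(y)=d_r(y,\G_r)\le\tilde{u}_r(y)/m_r+\ep$, i.e. $K\,d_{\G_r}(y)\le(K/m_r)\,\tilde{u}_r(y)+K\ep$.

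Combining the two estimates, for $\tilde{u}_r(y)\ge 0$ and using $\tilde{u}_r(y)\le K\,d_{\G_r}(y)\le K\,d_r(y,p)\le Kr_0$,
\[
0\le K\,d_{\G_r}(y)-\tilde{u}_r(y)\le\left(\frac{K}{m_r}-1\right)\tilde{u}_r(y)+K\ep\le\left(\frac{K}{m_r}-1\right)Kr_0+K\ep .
\]
Applying this to $-u$ handles $\tilde{u}_r(y)\le 0$ as well, so $\sup_{y\in B_{r_0}^r}\big|\tilde{u}_r(y)-K\,d_{\G_r}(y)\big|\le(K/m_r-1)Kr_0+K\ep$; letting $r\to\infty$ and using $m_r\to K$ gives $\limsup_{r\to\infty}\sup_{B_{r_0}^r}\big|\tilde{u}_r-K\,d_{\G_r}\big|\le K\ep$, and since $\ep>0$ is arbitrary the limit is $0$.

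The step I expect to be delicate is the reverse inequality near the vertex $o$ of the limiting cone, where \eqref{Drux0} provides no lower bound on $|D^r\tilde{u}_r|_r$: this is why the flow is run only inside the annulus $A_r$ and allowed to terminate on $\partial B_\ep^r$, and the argument closes only because that termination costs merely $O(\ep)$ — which in turn is exactly the smallness $|\tilde{u}_r|\le K\ep$ on $\overline{B_\ep^r}$ forced by $\tilde{u}_r(p)=0$. One could instead run the argument along the minimal geodesic from $y$ to its nearest point $x_0\in\G_r$, using that it meets $\G_r$ orthogonally so that $\dot\gamma(0)$ is parallel to $D^r\tilde{u}_r(x_0)$, together with the parallel-transport estimate \eqref{Drpixy} and the Hessian bound \eqref{f2}; but such a geodesic can still pass through $B_\ep^r$, forcing the same near-vertex bookkeeping, so the flow version seems cleaner.
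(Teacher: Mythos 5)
Your argument is correct, and it reaches the conclusion by a genuinely different mechanism than the paper. The paper fixes an extremal point $y_r$, replaces $\G_r$ by the shifted level set $\G_r^\ep=\tilde u_r^{-1}(\ep|Du|_0)$ (whose points are automatically at distance $\ge\ep$ from the center, so \eqref{Drux0} applies there), takes the nearest-point minimal geodesic from $\G_r^\ep$ to $y_r$ — which starts parallel to $D^r\tilde u_r$ by orthogonality — and integrates the gradient along it, controlling its rotation by the Hessian smallness in \eqref{f2}; letting $\ep\to0$ then gives the one-sided bound. You instead run the normalized downhill gradient flow inside the annulus $B^r_{3r_0}\setminus\overline{B^r_\ep}$, using only the uniform gradient lower bound \eqref{Drux0} (so $m_r\to|Du|_0$) together with the global Lipschitz bound $|D^r\tilde u_r|_r\le|Du|_0$, and you pay $O(\ep)$ when the flow terminates on $\p B^r_\ep$ because $p\in\G_r$. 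What your route buys: no second-derivative information along the connecting path is needed at the level of this lemma (in particular you never have to worry about whether the connecting geodesic passes near the vertex, where \eqref{f2} only gives a bound of order $1/\r$ rather than smallness — a point the paper's geodesic argument leaves implicit), and you get an explicit quantitative rate $(|Du|_0/m_r-1)|Du|_0 r_0+|Du|_0\ep$; handling $\tilde u_r\le0$ by passing to $-u$ is legitimate since $-u$ solves \eqref{u} with the same $|Du|_0$, the same zero set, and signed distance $-d_{\G_r}$. What it costs is nothing essential, since \eqref{Drux0} was itself derived in the paper from \eqref{f1}, \eqref{f2} and \eqref{Drpixy}, so the underlying inputs coincide; the only microscopic point worth stating explicitly is the degenerate case $y\in\overline{B^r_\ep}$ (where the flow runs for zero time), which is immediate because then $d_{\G_r}(y)\le\ep$ and your final displayed inequality holds trivially, and similarly the routine ODE bookkeeping that the flow, within time $\tilde u_r(y)/m_r<2r_0$, can only leave the annulus through the inner sphere.
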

\begin{proof}
For any $r>0$ there is a $y_r\in B^r_{r_0}$ such that
$$\big|\tilde{u}_r(y_r)-|Du|_0\cdot d_{\G_r}(y_r)\big|=\sup_{z\in B_{r_0}^r} \big|\tilde{u}_r(z)-|Du|_0\cdot d_{\G_r}(z)\big|.$$
Without loss of generality, we suppose $\tilde{u}_r(y_r),d_{\G_r}(y_r)>0$. Clearly,
$$\tilde{u}_r(y_r)-|Du|_0\cdot d_{\G_r}(y_r)\le0.$$

For any $\ep>0$ we set
$$\G_r^\ep=\tilde{u}_r^{-1}(\ep|Du|_0)=\{x\in r^{-2}\Si|\ \tilde{u}_r(x)=\ep|Du|_0\}.$$
Hence, for any $x_r\in\G_r^\ep$ one has $x_r\in r^{-2}\Si\setminus B^r_\ep$. For any fixed $\ep>0$ from \eqref{f1} and \eqref{f2} there is an $r_*>0$ so that for any $r\ge r_*$ we have $|D^r\tilde{u}_r|_r\Big|_{\G_r^\ep\cap B^r_{2r_0}}\ge\f12|Du|_0$.
Clearly there exist $z_r\in\G_r^\ep$ satisfying $d_r(y_r,z_r)=d_{\G_r^\ep}(y_r)$ and a unique normal geodesic  $\g_r$ connecting $y_r,z_r$ with $\g_r(0)=z_r$, $\g_r(l_r)=y_r$ and $|\dot{\g}_r|_r=1$, where $l_r=d_{\G_r^\ep}(y_r)$. Smoothness of $\G_r^\ep$ implies $\dot{\g}_r(0)=-D^r\tilde{u}_r(z_r)/|D^r\tilde{u}_r(z_r)|_r$,
then
\begin{equation}\aligned
\tilde{u}_r(y_r)=&\tilde{u}_r(y_r)-\tilde{u}_r(z_r)=\int_0^{l_r}\left\lan D^r\tilde{u}_r(\g_r(t)),\dot{\g}_r(t)\right\ran_rdt\\
=&\int_0^{l_r}\left\lan D^r\tilde{u}_r(\g_r(t))-D^r_{\pi_{\g}(t)}\tilde{u}_r(z_r),\dot{\g}_r(t)\right\ran_rdt+\int_0^{l_r}|D^r\tilde{u}_r(z_r)|_rdt\\
\ge&-\int_0^{l_r}\left|D^r_{\pi_{\g_r}(t)}\tilde{u}_r(z_r)-D^r\tilde{u}_r(\g_r(t))\right|_rdt+|D^r\tilde{u}_r(z_r)|_r\ d_{\G_r^\ep}(y_r)\\
\ge&-C\int_0^{l_r}\int_0^t\left|\mathrm{Hess}_{\tilde{u}_r}(\g_r(s))\right|_rdsdt+|D^r\tilde{u}_r(z_r)|_r\ d_{\G_r^\ep}(y_r).
\endaligned
\end{equation}
Combining \eqref{f2} and \eqref{Drux0} implies
\begin{equation}\aligned
\liminf_{r\rightarrow\infty}\left(\tilde{u}_r(y_r)-|Du|_0\cdot d_{\G_r^\ep}(y_r)\right)\ge0.
\endaligned
\end{equation}
Letting $\ep\rightarrow0$ completes the proof.
\end{proof}
\begin{remark}\label{urGsr}
Analogously to the proof of Lemma \ref{ury}, we have
$$\lim_{r\rightarrow\infty}\sup_{y\in B_{r_0}^r} \left|\tilde{u}_r(y)-|Du|_0 \left(d_{\G_r^s}(y)+s\right)\right|=0$$
for $s\in\R$, where $\G_r^s=\tilde{u}_r^{-1}(s|Du|_0)=\{x\in r^{-2}\Si|\ \tilde{u}_r(x)=s|Du|_0\}.$
\end{remark}

For any $x,y\in\G_r^s$, let $\g_{r,xy}^s$ be a normal geodesic joining $x$ to $y$ with length $l_{r,xy}^s$. Since
$$\f{\p^2}{\p t^2}\tilde{u}_r\left(\g_{r,xy}^s(t)\right)=\mathrm{Hess}_{\tilde{u}_r}\left(\dot{\g}_{r,xy}^s(t),\dot{\g}_{r,xy}^s(t)\right),$$
then by the Newton-Leibniz formula we conclude that
\begin{equation}\aligned\label{rxyr0urg}
\lim_{r\rightarrow\infty}\sup_{x,y\in\G^s_r\cap B^r_{r_0}}\left(\sup_{t\in[0,l_{r,xy}^s]}\left|\tilde{u}_r\left(\g_{r,xy}^s(t)\right)-s|Du|_0\right|\right)=0.
\endaligned
\end{equation}
For any sequence $r_i\rightarrow\infty$ there is a subsequence $r_{i_j}$ such that $r_{i_j}^{-1}\Si=(\Si,r_{i_j}^{-1}\si,p)$ converges to a regular metric cone $\Si_0$ with vertex $o$ in the pointed Gromov-Hausdorff sense. Clearly, the geodesic $\g_{r_{i_j},px}^s$ should converge to a radial line starting from $o$ in $\Si_0$. Therefore, combining \eqref{rxyr0urg} and Remark \ref{urGsr} we know that $\G_{r_{i_j}}^s$ must converge to an $(n-1)$-dimensional cone $C\mathcal{Y}_s$ in $\Si_0$. Moreover, for any $z_1,z_2\in C\mathcal{Y}_s$ the geodesic joining $z_1$ and $z_2$ in $\Si_0$ must live in $C\mathcal{Y}_s$.
Let $\Omega$ be a connected component of $\Si_0\setminus C\mathcal{Y}_s$ and $z_3,z_4\in\Omega$. Then the geodesic $\g_{z_3,z_4}$ joining $z_3$ and $z_4$ in $\Si_0$ satisfies  $\g_{z_3,z_4}\setminus C\mathcal{Y}_s\subset\Om$.

Consider $y,z\in\{x\in r^{-2}\Si|\ \tilde{u}_r(x)\ge0\}$.
For any $x\in\G_r$, $d_{\G_r}(y)\le d_r(y,x)\le d_r(x,z)+d_r(y,z)$. Taking all the points in $\G_r$ it follows that
$$d_{\G_r}(y)\le d_{\G_r}(z)+d_r(y,z).$$
The same method implies
$$d_{\G_r}(z)\le d_{\G_r}(y)+d_r(y,z).$$
Hence
\begin{equation}\aligned\label{dGrzy}
|d_{\G_r}(z)-d_{\G_r}(y)|\le d_r(y,z).
\endaligned
\end{equation}

For some fixed point $z$ with $\tilde{u}_r(z)>0$ and small $\ep>0$, choose $\de>0$ sufficiently small, then there is an $x\in\G_r$ such that
$$d_{\G_r}(z)+\ep\de\ge d_r(z,x).$$
Let $l(t)$ be a minimal geodesic  connecting $z$ and $x$, and denote $y\in l(t)\cap\p B^r_\de$. Then
$$d_{\G_r}(z)+\ep\ d_r(z,y)\ge d_r(y,x)+d_r(z,y)\ge d_{\G_r}(y)+d_r(z,y).$$
Combining this with \eqref{dGrzy} we conclude that $|D^rd_{\G_r}|_r\equiv1$ almost everywhere.
\begin{lemma}\label{Drur}
For any fixed $r_1>0$
\begin{equation}\aligned
\lim_{r\rightarrow\infty}\int_{B^r_{r_1}}\big|D^r\left(\tilde{u}_r-|Du|_0\cdot d_{\G_r}\right)\big|_r^2=0.
\endaligned
\end{equation}
\end{lemma}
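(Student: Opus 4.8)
The plan is to argue by contradiction: pass to a tangent cone at infinity of $\Si$ and there upgrade a weak convergence of gradients to a strong one. Write $w_r=\tilde u_r-|Du|_0\,d_{\G_r}$. On $r^{-2}\Si$ the available input is: the one-sided pointwise bound $|D^r\tilde u_r|_r=|Du|\le|Du|_0$ everywhere (Theorem \ref{GEu}), $|D^rd_{\G_r}|_r=1$ a.e., and $d_{\G_r}$ is $1$-Lipschitz with $d_{\G_r}(p)=0$, so $|D^rw_r|_r\le2|Du|_0$ everywhere; the interior Hessian estimate \eqref{BrHessu}, which after rescaling reads $\int_{B^r_s}|\mathrm{Hess}^r_{\tilde u_r}|_r^2\,d\mu_r=r^{2-n}\int_{B_{sr}(p)}|\mathrm{Hess}_u|^2\,d\mu_\Si\to0$ for every fixed $s$; and the mean-value equality \eqref{f1}, which together with $|D^r\tilde u_r|_r\le|Du|_0$ and \eqref{VolSi} forces $\int_{B^r_s}\bigl(|Du|_0^2-|D^r\tilde u_r|_r^2\bigr)\,d\mu_r\to0$ for every fixed $s$. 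Suppose the lemma fails: there are $\de>0$ and $r_i\to\infty$ with $\int_{B^{r_i}_{r_1}}|D^{r_i}w_{r_i}|_{r_i}^2\,d\mu_{r_i}\ge\de$. After passing to a subsequence, $r_i^{-2}\Si$ converges in the pointed Gromov-Hausdorff sense to a metric cone $\Si_0$ with vertex $o$, smooth away from $o$ with $C^{1,\a}$-metric $\si_\infty$, carrying diffeomorphisms $\Phi_i\colon K\to\Phi_i(K)\subset r_i^{-2}\Si$ with $\Phi_i^*(r_i^{-2}\si)\to\si_\infty$ in $C^{1,\a}(K)$ for every compact $K\subset\Si_0\setminus\{o\}$, as recorded in the Preliminaries; in particular metrics, volume forms and Christoffel symbols converge on each such $K$, and $Vol_{r_i}(B^{r_i}_s)\le\omega_n s^n$.

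Since $\tilde u_{r_i}$ and $d_{\G_{r_i}}$ are uniformly bounded and equi-Lipschitz on each $\Phi_i(K)$, a further subsequence gives $\tilde u_{r_i}\circ\Phi_i\to u_\infty$ and $d_{\G_{r_i}}\circ\Phi_i\to d_\infty$ locally uniformly on $\Si_0\setminus\{o\}$, with $u_\infty=|Du|_0\,d_\infty$ by Lemma \ref{ury}, and with $D^{r_i}\tilde u_{r_i}\rightharpoonup D^{\si_\infty}u_\infty$, $D^{r_i}d_{\G_{r_i}}\rightharpoonup D^{\si_\infty}d_\infty$ weakly in $L^2_{\mathrm{loc}}(\Si_0\setminus\{o\})$. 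Fix $\vep\in(0,r_1/2)$ and the annulus $A=B^{\Si_0}_{r_1}\setminus\overline{B^{\Si_0}_\vep}\Subset\Si_0\setminus\{o\}$, transplanted to $r_i^{-2}\Si$ by $\Phi_i$ up to a set of volume $o(1)$. The rescaled Hessian estimate gives $\mathrm{Hess}^{\si_\infty}(\tilde u_{r_i}\circ\Phi_i)\to0$ in $L^2(A)$, so $\{D(\tilde u_{r_i}\circ\Phi_i)\}$ is bounded in $W^{1,2}(A)$ and, by Rellich, $D^{r_i}\tilde u_{r_i}\to D^{\si_\infty}u_\infty$ \emph{strongly} in $L^2(A)$; hence $|D^{r_i}\tilde u_{r_i}|_{r_i}^2\to|D^{\si_\infty}u_\infty|_{\si_\infty}^2$ in $L^1(A)$, while by the bound noted above $|D^{r_i}\tilde u_{r_i}|_{r_i}^2\to|Du|_0^2$ in $L^1(A)$ as well, so $|D^{\si_\infty}u_\infty|_{\si_\infty}=|Du|_0$ a.e.\ on $A$ and therefore $|D^{\si_\infty}d_\infty|_{\si_\infty}=1$ a.e. Now $\int_A|D^{r_i}d_{\G_{r_i}}|_{r_i}^2\,d\mu_{r_i}=Vol_{r_i}(A)\to\int_A1\,d\mu_{\si_\infty}=\int_A|D^{\si_\infty}d_\infty|_{\si_\infty}^2\,d\mu_{\si_\infty}$, and this convergence of $L^2$-norms together with the weak convergence $D^{r_i}d_{\G_{r_i}}\rightharpoonup D^{\si_\infty}d_\infty$ upgrades to strong convergence $D^{r_i}d_{\G_{r_i}}\to D^{\si_\infty}d_\infty$ in $L^2(A)$.

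Combining the two strong convergences, $D^{r_i}w_{r_i}=D^{r_i}\tilde u_{r_i}-|Du|_0\,D^{r_i}d_{\G_{r_i}}\to D^{\si_\infty}u_\infty-|Du|_0\,D^{\si_\infty}d_\infty=D^{\si_\infty}\bigl(u_\infty-|Du|_0\,d_\infty\bigr)=0$ in $L^2(A)$, i.e.\ $\int_A|D^{r_i}w_{r_i}|_{r_i}^2\,d\mu_{r_i}\to0$; meanwhile $\int_{B^{r_i}_\vep}|D^{r_i}w_{r_i}|_{r_i}^2\,d\mu_{r_i}\le4|Du|_0^2\,Vol_{r_i}(B^{r_i}_\vep)\le4|Du|_0^2\,\omega_n\vep^n$ by \eqref{VolSi}. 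Hence $\limsup_i\int_{B^{r_i}_{r_1}}|D^{r_i}w_{r_i}|_{r_i}^2\,d\mu_{r_i}\le4|Du|_0^2\,\omega_n\vep^n$, and letting $\vep\to0$ contradicts the choice of $r_i$. The main obstacle is precisely the passage from weak to \emph{strong} $L^2$-convergence of the gradients on the annulus: for $\tilde u_r$ this is driven by the interior Hessian estimate \eqref{BrHessu} through Rellich's theorem, and for $d_{\G_r}$ it is driven by the mean-value equality \eqref{f1} together with the one-sided gradient bound of Theorem \ref{GEu}, which together pin down $|D^{\si_\infty}u_\infty|_{\si_\infty}=|Du|_0$ a.e.; the remaining points — the behaviour near the cone vertex and the Gromov-Hausdorff bookkeeping — are routine.
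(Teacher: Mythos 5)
Your proposal is correct in substance, but it is a genuinely different proof from the one in the paper. The paper stays on the rescaled manifold itself: with a cutoff $\eta$ supported in $\overline{B}^r_{2r_1}$ it expands $\int\big|D^r(\tilde u_r-|Du|_0\, d_{\G_r})\big|_r^2\eta^2$, uses $|D^rd_{\G_r}|_r=1$ a.e., and integrates the cross term by parts so that it is paid for by the uniform smallness of $\tilde u_r-|Du|_0\,d_{\G_r}$ (Lemma \ref{ury}) multiplied against $\div_r(D^r\tilde u_r)$ and $\lan D^r\eta,D^r\tilde u_r\ran_r$, the divergence being controlled by the pointwise Hessian decay \eqref{f2}; the remaining term $\int(|Du|_0^2-|D^r\tilde u_r|_r^2)\eta^2$ is handled by \eqref{f1}. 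You instead argue by contradiction and pass to a tangent cone at infinity, upgrading weak to strong $L^2$ convergence of the gradients on an annulus: for $D^r\tilde u_r$ via the correctly rescaled form of \eqref{BrHessu} and Rellich, and for $D^rd_{\G_r}$ via $|D^rd_{\G_r}|_r=1$ a.e., volume convergence and the Radon--Riesz property, with \eqref{f1} pinning the limit $|Du_\infty|=|Du|_0$ (hence $|Dd_\infty|=1$, which uses $|Du|_0>0$, a standing assumption of this section) and Lemma \ref{ury} identifying $u_\infty=|Du|_0\,d_\infty$; the cap near the vertex is absorbed by \eqref{VolSi}. Both proofs consume exactly the same analytic inputs (\eqref{f1}, Hessian decay, Lemma \ref{ury}, $|D^rd_{\G_r}|_r=1$ a.e.), so there is no circularity. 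The paper's computation is shorter and needs no limit space, no subsequence, and no measured Gromov--Hausdorff bookkeeping; your route requires the $C^{1,\a}$ structure of the tangent cone away from the vertex, Colding-type volume convergence, and careful transplantation via the diffeomorphisms $\Phi_i$ (which you rightly flag as routine within this paper's framework), but in exchange it explains the lemma as a weak-to-strong upgrade and directly yields the limit identity $Du_\infty=|Du|_0\,Dd_\infty$ on the cone, which is in the spirit of how the lemma is used afterwards to obtain \eqref{GHSigma}.
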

\begin{proof}
Let $\eta$ be a Lipschitz function with $\eta\big|_{B^r_{r_1}}\equiv0$, $|D^r\eta|_r\le1$ and $\mathrm{supp}\ \eta\subset \overline{B}^r_{2r_1}$. Let $\lan\cdot,\cdot\ran_r$ and $\mathrm{div}_r$ be the inner product and divergence in $r^{-2}\Si$, then
\begin{equation}\aligned
&\int_{B^r_{r_1}}\left|D^r\left(\tilde{u}_r-|Du|_0\cdot d_{\G_r}\right)\right|_r^2
\le\int_{B^r_{2r_1}}\left|D^r\left(\tilde{u}_r-|Du|_0\cdot d_{\G_r}\right)\right|_r^2\eta^2\\
=&\int_{B^r_{2r_1}}\left(|Du|_0^2\cdot|D^rd_{\G_r}|_r^2-|D^r\tilde{u}_r|_r^2\right)\eta^2-2\int_{B^r_{2r_1}}\left\lan D^r\tilde{u}_r,D^r\left(|Du|_0\cdot d_{\G_r}-\tilde{u}_r\right)\right\ran_r\eta^2\\
=&\int_{B^r_{2r_1}}\left(|Du|_0^2-|D^r\tilde{u}_r|_r^2\right)\eta^2\\
&+2\int_{B^r_{2r_1}} \left(2\eta\lan D^r\eta,D^r\tilde{u}_r\ran_r+\eta^2\div_r\left( D^r\tilde{u}_r\right)\right)\left(|Du|_0\cdot d_{\G_r}-\tilde{u}_r\right).
\endaligned
\end{equation}
Together with \eqref{f1}\eqref{f2} and Lemma \eqref{ury}, we complete the proof.
\end{proof}

If $(Z,d)$ is a metric space and $S_1,S_2\subset Z$, then we set
$$d(S_1,S_2)=\inf\{d(s_1,s_2)|\ s_1\in S_1,\ s_2\in S_2\},\quad B(S_1,\ep)=\{z\in Z|\ d(z,S_1)<\ep\}.$$
And we define \emph{Hausdorff distance} $d_H$ on $S_1,S_2$ by
$$d_H(S_1,S_2)=\inf\{\ep>0|\ S_1\subset B(S_2,\ep),\ S_2\subset B(S_1,\ep)\}.$$
If $Z_1,Z_2$ are both metric spaces, then an \emph{admissible metric} on the disjoint union $Z_1\coprod Z_2$ is a metric that extends the given metrics on $Z_1$ and $Z_2$. With this one can define the \emph{Gromov-Hausdorff distance} as
$$d_{GH}(Z_1,Z_2)=\inf\{d_H(Z_1,Z_2)|\ \mathrm{adimissible\ metrics\ on}\ Z_1\coprod Z_2\}.$$

Now we consider a mapping $B^r_{r_1}\rightarrow B_{(p,0)}({r_1},\G_r\times\R):\ y\mapsto(y_0,|Du|_0^{-1}\tilde{u}_r(y))$, where $y_0\in\G_r$ satisfies $d_r(y,y_0)=d_{\G_r}(y)$ and $B_{(p,0)}({r_1},\G_r\times\R)$ denotes the ball in $\G_r\times\R$ with radius $r_1$ and centered at $(p,0)$. Together with Lemma \ref{ury} and Lemma \ref{Drur}, using Theorem 3.6 in \cite{CC}, we obtain
\begin{equation}\aligned\label{GHSigma}
\lim_{r\rightarrow\infty}d_{GH}\big(B_{r_1}^r,B_{(p,0)}({r_1},\G_r\times\R)\big)=0.
\endaligned
\end{equation}
In fact, we can also obtain \eqref{GHSigma} through the following Lemma.
\begin{lemma}
For any fixed $r_1>0$
\begin{equation}\aligned
\lim_{r\rightarrow\infty}\sup_{y,z\in B_{r_1}^r} \left|\left(\tilde{u}_r(y)-\tilde{u}_r(z)\right)^2-|Du|_0^2\cdot\left(d_r(y,z)^2-d_r(y_0,z_0)^2\right)\right|=0,
\endaligned
\end{equation}
where $y_0,z_0\in\G_r$ satisfy $d_r(y,y_0)=d_{\G_r}(y)$ and $d_r(z,z_0)=d_{\G_r}(z)$.
\end{lemma}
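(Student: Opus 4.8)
The plan is to establish, uniformly over $y,z\in B_{r_1}^r$ and as $r\to\infty$, the asymptotic Pythagorean identity
\[
d_r(y,z)^2=d_r(y_0,z_0)^2+|Du|_0^{-2}\big(\td u_r(y)-\td u_r(z)\big)^2+o(1),
\]
which is equivalent to the stated limit after multiplying by $|Du|_0^2>0$. Put $f=|Du|_0^{-1}\td u_r$, $s_y=f(y)$, $s_z=f(z)$; by Lemma \ref{ury} and Remark \ref{urGsr} one has $d_{\G_r^s}(w)=f(w)-s+o(1)$ uniformly on $B_{2r_1}^r$, so the natural object to use is the unit-speed gradient flow $\Phi^r_\tau$ of $\td u_r$, i.e.\ the flow of $\hat n=D^r\td u_r/|D^r\td u_r|_r$. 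I would first record its properties on a fixed annulus $B_{2r_1}^r\setminus B_\ep^r$, where by \eqref{f2} and \eqref{Drux0} we have $|D^r\td u_r|_r=|Du|_0+o(1)$ and $|\mathrm{Hess}^r_{\td u_r}|_r=o(1)$: (i) the trajectories of $\Phi^r_\tau$ are $o(1)$-almost geodesics; (ii) $\Phi^r_\tau$ increases $f$ by $\tau+o(1)$, hence carries $\G_r^s$ into an $o(1)$-neighbourhood of $\G_r^{s+\tau}$; (iii) over a bounded time interval $\Phi^r_\tau$ distorts the induced metric on the level sets of $\td u_r$ by a factor $1+o(1)$; and (iv) the distance-minimizing geodesic from $y$ to $\G_r$ is $o(1)$-close to a $\Phi^r$-trajectory, so $\Phi^r$ identifies $y$ with its nearest point $y_0\in\G_r$ up to $o(1)$ (and likewise for $z$). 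Property (iv) comes from the device already used for \eqref{rxyr0urg}: along a minimal geodesic from $y$ to $y_0$ the function $\td u_r$ is almost affine (since $|\mathrm{Hess}^r_{\td u_r}|_r=o(1)$) and changes by $|Du|_0 s_y$ over length $|s_y|+o(1)$, so its derivative has modulus $|Du|_0+o(1)=|D^r\td u_r|_r+o(1)$, which forces the geodesic to be almost parallel to $D^r\td u_r$. Throughout, the vertex ball $B_\ep^r$, on which these estimates degenerate, has diameter $\le 2\ep$, hence contributes only $O(\ep)$ to every length integral, and is eliminated by letting $r\to\infty$ first and then $\ep\to0$.

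For the lower bound $d_r(y,z)^2\ge d_r(y_0,z_0)^2+(s_y-s_z)^2-o(1)$ I would take a unit-speed minimal geodesic $\g:[0,L]\to r^{-2}\Si$ ($L=d_r(y,z)\le 2r_1$) from $y$ to $z$ and set $a(t)=\lan\dot\g(t),\hat n(\g(t))\ran_r\in[-1,1]$. Flowing each $\g(t)$ back to $\G_r$ by $\Phi^r$ produces, using (i), (iii) and (iv), a curve in $\G_r$ from an $o(1)$-neighbourhood of $y_0$ to an $o(1)$-neighbourhood of $z_0$ of length at most $\int_0^L\sqrt{1-a(t)^2}\,dt+o(1)$, whence $d_r(y_0,z_0)\le \int_0^L\sqrt{1-a^2}\,dt+o(1)$; at the same time $|s_y-s_z|=\big|\int_0^L\lan D^rf,\dot\g\ran_r\,dt\big|\le(1+o(1))\int_0^L|a(t)|\,dt$. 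The elementary bound $\big(\int_0^L\sqrt{1-a^2}\big)^2+\big(\int_0^L|a|\big)^2\le L\int_0^L(1-a^2)+L\int_0^La^2=L^2$ (Cauchy--Schwarz) then yields $d_r(y_0,z_0)^2+(s_y-s_z)^2\le L^2+o(1)$.

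For the upper bound $d_r(y,z)^2\le d_r(y_0,z_0)^2+(s_y-s_z)^2+o(1)$ I would begin with a unit-speed minimal geodesic $\si:[0,\ell]\to r^{-2}\Si$ ($\ell=d_r(y_0,z_0)\le 4r_1$) from $y_0$ to $z_0$; since $\td u_r$ vanishes at its endpoints and $\frac{d^2}{dt^2}\td u_r(\si(t))=\mathrm{Hess}^r_{\td u_r}(\dot\si,\dot\si)=o(1)$, the function $\td u_r$ is $o(1)$-small along $\si$, so $\si$ lies in an $o(1)$-neighbourhood of $\G_r$ (this is \eqref{rxyr0urg} with $s=0$). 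I would then form the comparison curve $\hat\si(t)=\Phi^r_{\,s_y+(s_z-s_y)t/\ell}\big(\si(t)\big)$; by (i), (ii) and (iv) it runs from a point $o(1)$-close to $y$ to a point $o(1)$-close to $z$, and by (iii) together with $\hat n\perp(\text{level sets})$ its speed satisfies $|\dot{\hat\si}(t)|_r\le\sqrt{1+\big((s_z-s_y)/\ell\big)^2}+o(1)$, so $d_r(y,z)\le\mathrm{length}(\hat\si)+o(1)\le\sqrt{\ell^2+(s_y-s_z)^2}+o(1)$. Combining the two bounds gives the identity above.

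I expect the main obstacle to be the upper bound, and within it the uniform control of the flow $\Phi^r_\tau$ asserted in (i)--(iv) — in particular that its trajectories are almost geodesics and that it acts almost isometrically between level sets — which must be extracted from the Hessian decay \eqref{f2} and the gradient normalization \eqref{Drux0}, with the near-vertex degeneracy absorbed by the order-of-limits argument. As a shortcut one could instead invoke \eqref{GHSigma}: pass to a subsequence along which $r^{-2}\Si$ converges in the pointed Gromov--Hausdorff sense to a metric product $\Si_0=Y\times\R$ (using \eqref{GHSigma} and that $\G_r\to Y\times\{0\}$, that $\td u_r\to|Du|_0$ times the signed distance to $Y\times\{0\}$ by Lemma \ref{ury}, and that $y_0,z_0$ converge to the unique metric projections of $\lim y,\lim z$ onto $Y\times\{0\}$), and then read the identity off from the Pythagorean theorem in $Y\times\R$; however, since this Lemma is also meant to provide an independent route to \eqref{GHSigma}, the direct argument above is the one I would write out.
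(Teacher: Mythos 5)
Your overall strategy is viable, but it is a genuinely different route from the paper's. The paper follows Fukaya's argument for (23.16) in \cite{Fu}: it takes the two normal minimal geodesics $l_1,l_2$ from the feet $y_0,z_0$ up to $y,z$, sets $Q(t)=d_r(l_1(t),l_2(t))$, and controls $Q$ through the first variation of arc length along the cross geodesics $\g_t$ joining $l_2(t)$ to $l_1(t)$: the function $h_t(s)=\tilde{u}_r(\g_t(s))$ is almost affine by \eqref{f2}, its endpoint values are pinned down by Lemma \ref{ury}, and $\dot{l}_i$ is replaced by $|Du|_0^{-1}D^r\tilde{u}_r$ only in an integrated (Cauchy--Schwarz) sense, which yields $Q^2\approx d_r(y_0,z_0)^2$ for $t\le d_{\G_r}(z)$ and $Q^2(d_{\G_r}(y))-Q^2(d_{\G_r}(z))\approx(d_{\G_r}(y)-d_{\G_r}(z))^2$ beyond; the lemma then follows from $d_r(y,z)=Q(d_{\G_r}(y))$ and Lemma \ref{ury}. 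You instead run a Cheeger--Colding style almost-splitting argument with the normalized gradient flow: a flow-projection plus Cauchy--Schwarz for one inequality, a flowed comparison curve for the other. The analytic inputs are identical (\eqref{f1}, \eqref{f2}, \eqref{Drux0}, Lemma \ref{ury}, \eqref{rxyr0urg}), and your justification of property (iv) is exactly the device the paper uses in \eqref{Du0liDru}--\eqref{Du0li}. What the paper's route buys is that it never needs the flow $\Phi^r_\tau$ at all, only geodesics and first variation, so everything reduces to integrated inequalities; your route is more geometric but requires proving (i)--(iv) with uniform constants (norm and angle control of $d\Phi^r_\tau$ on level-set directions via $|\nabla\hat{n}|=o(1)$, plus a Gronwall-type stability step to pass from ``the minimizing geodesic is almost an integral curve'' to ``$\Phi^r_{s_y}(y_0)$ is close to $y$''), which is comparable in length to the paper's computation.

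The one step I would not accept as written is the treatment of the vertex region. The remark that $B^r_\ep$ ``contributes only $O(\ep)$ to every length integral'' does not address the actual difficulty: on $B^r_\ep$ there is no lower bound on $|D^r\tilde{u}_r|_r$ (critical points of $u$ are not excluded there, since \eqref{Drux0} only controls the annulus), so $\hat{n}$, and hence $\Phi^r_\tau$, may be undefined or completely uncontrolled there; and the affected set is not $B^r_\ep$ itself but the whole strip of points whose flow trajectories, whose minimizing geodesics to $\G_r$, or whose connecting geodesic $\si$ between the feet, meet $B^r_\ep$. Along those portions the projected curve in your lower bound and the comparison curve $\hat\si$ in your upper bound are simply not defined or not controlled, and this is not cured by a bound on the length of $B^r_\ep$. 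It is fixable --- e.g.\ detour the relevant curves around $B^r_{2\ep}$ at cost $O(\ep)$ using the almost-cone structure, and bridge the projections of the two strip-boundary points (both lie within $O(\ep)$ of $p$) by the triangle inequality --- or one can argue avoidance directly, which is precisely what the paper does: its proof contains a dedicated argument, via convergence of the level sets to cones $C\mathcal{Y}_s$ and of the cross geodesics to limit geodesics lying on one side of $C\mathcal{Y}_{3t/(4|Du|_0)}$, showing that $\g_{t,r}$ stays outside $B^r_{\ep/(2|Du|_0)}$ for $t\ge\ep$ and $r$ large. Some argument of this kind must be supplied before your order-of-limits remark can close the proof.
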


\begin{proof} We shall use the idea of  the proof of (23.16) in \cite{Fu} to show our Lemma.
For any small $\de>0$,
let $\ep_i(r)$ be a general positive function satisfying $\lim_{r\rightarrow\infty}\ep_i(r)=0$, which depends only on $n,r_1,c,V_{\Si},\de$ for $i=1,2,\cdots$. It is sufficient to show that
$$\left|\left(\tilde{u}_r(y)-\tilde{u}_r(z)\right)^2-|Du|_0^2\cdot\left(d_r(y,z)^2-d_r(y_0,z_0)^2\right)\right|\le\ep_1(r)+\de$$
for any $y,z\in B_{r_1}^r$.
Suppose $d_{\G_r}(y)\ge d_{\G_r}(z)\ge0$ ($d_{\G_r}(y)d_{\G_r}(z)\le0$ is similar). Let $l_1:\ [0,d_{\G_r}(y)]\rightarrow r^{-2}\Si$ be a normal minimal geodesic joining $y_0$ to $y$, and $l_2:\ [0,d_{\G_r}(z)]\rightarrow r^{-2}\Si$ be a normal minimal geodesic joining $z_0$ to $z$. When $d_{\G_r}(z)\le t\le d_{\G_r}(y)$, we set $l_2(t)=z$ for convenience. Let $Q(t)=d_r(l_1(t),l_2(t))$ and $\g_t=\g_{t,r}:\ [0,Q(t)]\rightarrow r^{-2}\Si$ be a normal minimal geodesic joining $l_2(t)$ to $l_1(t)$.

Let $h_t(s)=\tilde{u}_r(\g_t(s))$, then
\begin{equation}\aligned
\left|\f{d^2h_t}{ds^2}\right|=\left|\mathrm{Hess}_{\tilde{u}_r}(\dot{\g}_t(s),\dot{\g}_t(s))\right|\le\left|\mathrm{Hess}_{\tilde{u}_r}\right|_r.
\endaligned
\end{equation}
Note that $\g_{t,r_{i_j}}$  converges to a normal minimal geodesic $\tilde{\g}_t$ as $r_{i,j}^{-1}\Si$ converges to a cone $\Si_0$.
Hence due to $\G_{r_{i_j}}^s$ converging to a cone $C\mathcal{Y}_s\subset\Si_0$ and $\tilde{\g}_t$ living on one side of $C\mathcal{Y}_{\f{3t}{4|Du|_0}}$ we obtain $\g_{t,r_{i_j}}(s)\in r^{-2}\Si\setminus B^r_{\f{\ep}{2|Du|_0}}$ for any $0\le s\le Q(t)$ and $t\ge\ep$ if $r$ is sufficiently large. Since we can choose any sequence $r_i$ and then choose a suitable subsequence, we conclude $\g_{t,r}\in r^{-2}\Si\setminus B^r_{\f{\ep}{2|Du|_0}}$ for $t\ge\ep$ and sufficiently large $r$.
Hence combining \eqref{f2} and the Newton-Leibniz formula we have
\begin{equation}\aligned
\left|h_t(Q(t))-h_t(0)-Q(t)\f{dh_t}{ds}(Q(t))\right|\le\ep_2(r)
\endaligned
\end{equation}
for any fixed $t\ge\ep$. Note $h_t(Q(t))=\tilde{u}_r(l_1(t))$ and $h_t(0)=\tilde{u}_r(l_2(t))$, Lemma \ref{ury} implies
\begin{equation}\aligned
\left|h_t(Q(t))-|Du|_0t\right|\le\ep_3(r)\qquad &\mathrm{for}\ 0\le t\le d_{\G_r}(y),\\
\left|h_t(0)-|Du|_0t\right|\le\ep_4(r)\qquad &\mathrm{for}\ 0\le t\le d_{\G_r}(z).\\
\endaligned
\end{equation}
Without loss of generality, we assume $d_{\G_r}(y)>\ep$. So we obtain
\begin{equation}\aligned\label{QtGzy}
\left|Q(t)\f{dh_t}{ds}(Q(t))\right|\le\ep_5(r)\qquad &\mathrm{for}\ \ep\le t< \max\{d_{\G_r}(z),\ep\},\\
\left|Q(t)\f{dh_t}{ds}(Q(t))-|Du|_0\left(t-d_{\G_r}(z)\right)\right|\le\ep_6(r)\qquad &\mathrm{for}\  \max\{d_{\G_r}(z),\ep\}\le t\le d_{\G_r}(y).
\endaligned
\end{equation}
Analogously we get
\begin{equation}\aligned\label{Q0Qtep}
\left|Q(t)\f{dh_t}{ds}(0)\right|+\left|Q(t)\f{dh_t}{ds}(Q(t))\right|\le2\ep_5(r)\qquad \mathrm{for}\ \ep\le t\le \max\{d_{\G_r}(z),\ep\}.
\endaligned
\end{equation}
Note initial data $Q(0)=d_r(y_0,z_0)$, then for $0\le t\le d_{\G_r}(z)$ we have
\begin{equation}\aligned\label{Qtdry0z0}
&\left|Q^2(t)-d_r(y_0,z_0)^2\right|\le2\int_0^t\left|Q(s)\f{dQ}{ds}\right|ds\\
=&2\int_0^tQ(s)\left|\left\lan \dot{\g}_s(Q(s)), \dot{l}_1(s)\right\ran_r-\left\lan \dot{\g}_s(0), \dot{l}_2(s)\right\ran_r\right|ds.
\endaligned
\end{equation}
Since
$$\left|\f{dh_t}{ds}\right|_{t=0}+\left|\f{dh_t}{ds}\right|_{t=Q(s)}=\left|\left\lan \dot{\g}_s(0), D^r\tilde{u}_r\Big|_{l_2(s)}\right\ran_r\right|+\left|\left\lan \dot{\g}_s(Q(s)), D^r\tilde{u}_r\Big|_{l_1(s)}\right\ran_r\right|,$$
then combining \eqref{Q0Qtep} and \eqref{Qtdry0z0} gets
\begin{equation}\aligned\label{Du0Q2t}
|Du|_0&\left|Q^2(t)-d_r(y_0,z_0)^2\right|\le2\int_0^tQ(s)\left|\left\lan \dot{\g}_s(Q(s)), |Du|_0\dot{l}_1(s)-D^r\tilde{u}_r\big|_{l_1(s)}\right\ran_r\right|ds\\
+&2\int_0^tQ(s)\left|\left\lan \dot{\g}_s(0), |Du|_0\dot{l}_2(s)-D^r\tilde{u}_r\big|_{l_2(s)}\right\ran_r\right|ds+ 4\ep_5(r)+C\ep,
\endaligned
\end{equation}
where $C$ stands for  a general positive constant.
Lemma \ref{ury} indicates
\begin{equation}\aligned\label{Du0liDru}
\int_0^t\left(|Du|_0-\left\lan \dot{l}_i(s),D^r\tilde{u}_r\big|_{l_i(s)}\right\ran_r\right)ds\le\ep_6(r)
\endaligned
\end{equation}
for $i=1,2$. Then
\begin{equation}\aligned\label{Du0li}
&\left(\int_0^t\left||Du|_0\dot{l}_i(s)-D^r\tilde{u}_r\big|_{l_i(s)}\right|_rds\right)^2
\le t\int_0^t\left||Du|_0\dot{l}_i(s)-D^r\tilde{u}_r\big|_{l_i(s)}\right|_r^2ds\\
=&t\int_0^t\left(|Du|_0^2+\left|D^r\tilde{u}_r(l_i(s))\right|_r^2-2|Du|_0\left\lan \dot{l}_i(s),D^r\tilde{u}_r\big|_{l_i(s)}\right\ran_r\right)ds\\
\le&t\int_0^t\left(\left|D^r\tilde{u}_r(l_i(s))\right|_r^2-|Du|_0^2\right)ds+2t^2|Du|_0\ep_6(r)\le2t^2|Du|_0\ep_6(r)\le\left(\ep_7(r)\right)^2.
\endaligned
\end{equation}
Hence combining \eqref{Du0Q2t} and \eqref{Du0li} we obtain
\begin{equation}\aligned\label{Du0Q2tdry0z0}
|Du|_0&\left|Q^2(t)-d_r(y_0,z_0)^2\right|\le\ep_8(r)+C\ep.
\endaligned
\end{equation}
For $d_{\G_r}(z)\le t\le d_{\G_r}(y)$ we have
\begin{equation}\aligned\label{htQt}
\left|\f{dh_t}{ds}(Q(t))-|Du|_0\f{dQ}{dt}(t)\right|=\left|\left\lan \dot{\g}_t(Q(t)), D^r\tilde{u}_r\big|_{l_1(s)}\right\ran_r-|Du|_0\left\lan \dot{\g}_t(Q(t)), \dot{l}_1(t)\right\ran_r\right|,
\endaligned
\end{equation}
then similar to the argument for $t\le \max\{d_{\G_r}(z),\ep\}$, employing \eqref{Du0liDru} and \eqref{htQt}, and integrating the second inequality in \eqref{QtGzy} give
\begin{equation}\aligned\label{Qdryz}
\left|Q^2(d_{\G_r}(y))-Q^2(d_{\G_r}(z))-\left(d_{\G_r}(y)-d_{\G_r}(z)\right)^2\right|\le\ep_{9}(r)+C\ep,
\endaligned
\end{equation}
where $C$ is a constant. Note $d_r(y,z)=Q(d_{\G_r}(y))$. Combining \eqref{Du0Q2tdry0z0}, \eqref{Qdryz} and Lemma \ref{ury} we have
\begin{equation}\aligned
&\left|\left(\tilde{u}_r(y)-\tilde{u}_r(z)\right)^2-|Du|_0^2\cdot\left(d_r(y,z)^2-d_r(y_0,z_0)^2\right)\right|\\
\le&\left|\left(\tilde{u}_r(y)-\tilde{u}_r(z)\right)^2-|Du|_0^2\cdot\left(Q^2(d_{\G_r}(y))-Q^2(d_{\G_r}(z))\right)\right|\\
&+|Du|_0^2\left|Q^2(d_{\G_r}(z))-d_r(y_0,z_0)^2\right|\\
\le&|Du|_0\left(\ep_8(r)+C\ep\right)+|Du|_0^2\left(\ep_9(r)+C\ep\right)\\
&+\left|\left(\tilde{u}_r(y)-\tilde{u}_r(z)\right)^2-|Du|_0^2\cdot\left(d_{\G_r}(y)-d_{\G_r}(z)\right)^2\right|\\
=&\ep_{10}(r)+C\ep+\left|\left(\tilde{u}_r(y)-\tilde{u}_r(z)\right)^2-\left(|Du|_0\cdot d_{\G_r}(y)-|Du|_0\cdot d_{\G_r}(z)\right)^2\right|\\
\le&\ep_{11}(r)+C\ep.
\endaligned
\end{equation}
Hence we complete the proof.
\end{proof}

For any sequence $r_i\rightarrow\infty$ there is a subsequence $r_{i_j}$ such that $r_{i_j}^{-1}\Si=(\Si,r_{i_j}^{-1}\si,p)$ converges to a metric cone $C\mathfrak{X}$ with vertex $o$ over some smooth manifold $\mathfrak{X}$ in the pointed Gromov-Hausdorff sense. Let $\mathfrak{B}_r$ be the geodesic ball with radius $r$ and centered at $o$ in $C\mathfrak{X}$. Then with \eqref{GHSigma} we get
\begin{equation}\aligned
\lim_{j\rightarrow\infty}d_{GH}\big(\mathfrak{B}_{r_1},B_{(p,0)}({r_1},\G_{r_{i_j}}\times\R)\big)=0.
\endaligned
\end{equation}
By the previous argument, there exists an $(n-1)$-dimensional cone $\mathcal{Y}$ so that $\G_{r_{i_j}}$ converges to $\mathcal{Y}$.
Hence $C\mathfrak{X}=\mathcal{Y}\times\R$, namely, any tangent cone of $\Si$ at infinity splits off a factor $\R$ isometrically.

Let $B^r_s(z)$ be the ball with radius $s$ and centered at $z$ in $r^{-2}\Si=(\Si,r^{-2}\si,p)$.
For any $\ep>0$ and $s>0$, by volume comparison theorem and condition C3), there is a sufficiently large $r_0>0$ such that
$$Vol\big(B_s^{r_i}(z_i)\big)\ge(\omega_n-\ep)s^n$$
for each $z_i\in r_i^{-2}\Si$ with $d_{r_i}(z_i,p)\ge r_0+s$ and $r_i\rightarrow\infty$. Let $\mathfrak{B}_s(z)$ be the geodesic ball with radius $s$ and centered at $z$ in $C\mathfrak{X}$. Taking limit in the above inequality gets
\begin{equation}\aligned\label{VfrakBszep}
Vol\big(\mathfrak{B}_s(z)\big)\ge(\omega_n-\ep)s^n
\endaligned
\end{equation}
for any $z\in C\mathfrak{X}$ with $d_{\infty}(z,o)\ge r_0+s$, where $d_{\infty}$ is the distance function on $C\mathfrak{X}$. Let $\widetilde{\mathfrak{B}}_s(y)$ be the geodesic ball with radius $s$ and centered at $y$ in $C\mathcal{Y}$. Let $z=(y,t_z)\in\mathcal{Y}\times\R$, then \eqref{VfrakBszep} implies
\begin{equation}\aligned\label{IVfrakBszep}
\int_{-s}^sVol\big(\widetilde{\mathfrak{B}}_{\sqrt{s^2-t^2}}(y)\big)dt\ge(\omega_n-\ep)s^n.
\endaligned
\end{equation}
Let $r_0\rightarrow\infty$, and we fix $y$, then $t_z\rightarrow\infty$. So we obtain \eqref{IVfrakBszep} for any $y\in\mathcal{Y}$ and $\ep>0$. Hence
\begin{equation}\aligned\nonumber
\int_{-s}^sVol\big(\widetilde{\mathfrak{B}}_{\sqrt{s^2-t^2}}(y)\big)dt\ge\omega_ns^n,
\endaligned
\end{equation}
which means
\begin{equation}\aligned\label{VfrakBsz}
Vol\big(\mathfrak{B}_{s}(z)\big)\ge\omega_ns^n \qquad \mathrm{for\ any}\ z\in C\mathfrak{X}\ \mathrm{and}\ s>0 .
\endaligned
\end{equation}
Since $\lim_{r\rightarrow\infty}\f{Vol(B_r(x))}{r^n}$ is monotonically nonincreasing, then \eqref{VfrakBsz} implies
\begin{equation}\aligned
Vol\big(B_{s}(z)\big)\ge\omega_ns^n \qquad \mathrm{for\ any}\ z\in \Si\ \mathrm{and}\ s>0.
\endaligned
\end{equation}
By the Bishop volume estimate, $\Si$ is isometric to $\R^n$ (see also the proof of Theorem 0.3 in \cite{C}).

Altogether, we obtain the following Liouville type theorem for minimal
graphic functions with linear growth. This should be compared with the harmonic function theory in \cite{CCM}.
\begin{theorem}\label{SplitLiouville}
Let $u$ be an entire solution to \eqref{u} on a complete Riemannian manifold $\Si$ with conditions C1), C2), C3).
If $u$ has at most linear growth on one side, then $u$ must be a constant unless $\Si$ is isometric to Euclidean space.
\end{theorem}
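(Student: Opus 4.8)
The plan is to collect the results developed in this section: a non-constant minimal graphic function of linear growth on one side can exist only when $\Si$ is flat, and this, combined with Theorem~\ref{Liusub}, gives the conclusion. So assume $u$ is non-constant with, say, $u_+$ of at most linear growth. By Theorem~\ref{GEu} the quantity $|Du|_0:=\sup_\Si|Du|$ is finite, and it is strictly positive since $u$ is non-constant; Theorem~\ref{Liusub} excludes sub-linear growth, so $u$ has genuine linear growth. Normalize $u(p)=0$.

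The first ingredient I would set up is scale-invariant regularity for \eqref{u}. Using the harmonic coordinates of Jost--Karcher (available under C1)--C3)) and differentiating \eqref{u} into a divergence-form linear equation for $w=\partial_k u$ with inhomogeneity of size $O(r^{-1})$, the De~Giorgi--Nash oscillation estimate together with the standard iteration yields interior H\"older control of $Du$, hence the scale-invariant Schauder bounds $\sup_{B_{\theta r/2}(q)}|D^2u|\le C r^{-2}\sup_{B_{\theta r}(q)}|u|$ and $[D^2u]_{\alpha,B_{\theta r/2}(q)}\le C r^{-2-\alpha}\sup_{B_{\theta r}(q)}|u|$, with constants depending only on $n,|Du|_0,c,V_\Si$. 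The second ingredient is the Bochner-type identity \eqref{Logv}, which bounds $|B|^2$, hence $|\mathrm{Hess}_u|^2$ along $M$, by $\De(\log v-\phi_{\max})$. Testing this against a cutoff built from the comparison function $b$ of Section~2 (which satisfies \eqref{Deb}, $|Db|\le 1$, $c'r\le b\le r$) and using the mean-value equality $\phi_{\max}=\lim_r \mathrm{Vol}(D_r)^{-1}\int_{D_r}\log v$ gives $r^{2-n}\int_{B_r}|\mathrm{Hess}_u|^2\,d\mu_\Si\to 0$. A covering argument combining this integral decay with the pointwise H\"older bound for $D^2u$ upgrades it to $\limsup_{r\to\infty}\big(r\sup_{\partial B_r}|D^2u|\big)=0$.

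Next, rescale: replace $\si$ by $r^{-2}\si$ and set $\tilde{u}_r=r^{-1}u$. Then \eqref{supDu2} gives $\mathrm{Vol}(B^r_{r_0})^{-1}\int_{B^r_{r_0}}|D^r\tilde{u}_r|_r^2\,d\mu_r\to|Du|_0^2$, while the Hessian decay gives $\sup_{\partial B^r_{r_0}}|\mathrm{Hess}^r_{\tilde{u}_r}|_r\to 0$ with a uniform $C/r_0$ bound. Hence, after parallel transport, $D^r\tilde{u}_r$ is asymptotically parallel of norm $|Du|_0$ away from the vertex; integrating it along minimal geodesics issued from the zero level set $\Gamma_r=\tilde{u}_r^{-1}(0)$ shows $\sup_{y\in B^r_{r_0}}\big|\tilde{u}_r(y)-|Du|_0\,d_{\Gamma_r}(y)\big|\to 0$ (Lemma~\ref{ury}), and likewise for every level $\Gamma_r^s$. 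The decisive step is the Pythagorean estimate $\big(\tilde{u}_r(y)-\tilde{u}_r(z)\big)^2-|Du|_0^2\big(d_r(y,z)^2-d_r(y_0,z_0)^2\big)\to 0$ uniformly for $y,z\in B^r_{r_0}$, with $y_0,z_0$ the nearest points of $\Gamma_r$; I would prove it by a Fukaya-type second-variation argument, following the geodesic families joining the two rays dropped to $\Gamma_r$ and using that such geodesics stay a definite distance from the vertex --- where the rescaled Hessian of $\tilde{u}_r$ is not controlled --- while elsewhere that Hessian is uniformly small. Together with the a.e.\ identity $|D^r d_{\Gamma_r}|_r\equiv 1$, this identifies the Gromov--Hausdorff limit of $B^r_{r_0}$ with $B_{(p,0)}(r_0,\Gamma_r\times\R)$; passing to a subsequential tangent cone $C\mathfrak{X}$ of $\Si$ at infinity one concludes $C\mathfrak{X}=\mathcal{Y}\times\R$ for an $(n-1)$-dimensional cone $\mathcal{Y}$, i.e.\ every tangent cone at infinity splits off an $\R$-factor isometrically.

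Finally, exploit condition C2). By volume comparison, balls in $C\mathfrak{X}$ centered far from the vertex have volume at least $(\omega_n-\vep)s^n$; slicing such a ball centered at $(y,t_z)\in\mathcal{Y}\times\R$ over the $\R$-factor and letting $t_z\to\infty$ with $y$ fixed forces $\mathrm{Vol}(\mathfrak{B}_s(z))\ge\omega_n s^n$ for every $z\in C\mathfrak{X}$ and every $s>0$. By the monotonicity of $r\mapsto r^{-n}\mathrm{Vol}(B_r(\cdot))$ this propagates back to $\mathrm{Vol}(B_s(z))\ge\omega_n s^n$ for all $z\in\Si$ and $s>0$, whence $\Si$ is isometric to $\R^n$ by Bishop rigidity --- contradicting the hypothesis that $\Si$ is not Euclidean, so $u$ must be constant. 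I expect the Pythagorean/Gromov--Hausdorff step to be the main obstacle: the Hessian of $u$ is controlled a priori only in an integrated sense, and converting this to the uniform pointwise control along geodesics in the rescaled manifolds --- uniformly as the base point degenerates toward the vertex of the limit cone --- is precisely where the scale-invariant Schauder estimates and the careful bookkeeping of the error terms $\vep_i(r)$ are needed.
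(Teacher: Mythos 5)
Your proposal follows essentially the same route as the paper's own proof: scale-invariant Schauder estimates in Jost--Karcher harmonic coordinates, integral Hessian decay from the Bochner formula \eqref{Logv} tested against a cutoff in $b$, the pointwise upgrade $r\sup_{\p B_r}|D^2u|\to0$, the rescaling with Lemma \ref{ury}, the Fukaya-type Pythagorean estimate giving the splitting $C\mathfrak{X}=\mathcal{Y}\times\R$ of every tangent cone at infinity, and finally the volume-slicing plus Bishop rigidity argument forcing $\Si\cong\R^n$. It is correct and matches the paper's argument, including the reduction via Theorems \ref{GEu} and \ref{Liusub}.
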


\section{A Liouville theorem for minimal graphic functions without growth conditions}

Let $G(p,\cdot)$ be the Green function on $\Si^n(n\ge3)$ and $b=G^{\f1{2-n}}$ as before. Now we set
$$\tilde{b}=\left(\f{\omega_n}{V_{\Si}}\right)^{\f1{n-2}}b$$
and define a function $\mathcal{R}$ in $\Si\times\R$ by
$$\mathcal{R}(x,t)=\sqrt{\tilde{b}^2(x)+t^2},\qquad \mathrm{for}\ (x,t)\in\Si\times\R.$$
Then
\begin{equation}\aligned
\De_N\mathcal{R}^2=2n|\na \tilde{b}|^2+2,\qquad |\overline{\na}\mathcal{R}|^2=\f{\tilde{b}^2|\na \tilde{b}|^2+t^2}{\tilde{b}^2+t^2}\le\left(\f{\omega_n}{V_{\Si}}\right)^{\f2{n-2}}.
\endaligned
\end{equation}
Let $\mathbb{B}_r$ be the ball in $\Si\times\R$ with radius $r$ and centered at $(p,0)$.
By the properties \eqref{Deb}\eqref{br}\eqref{Hessb2} of the function $b$ we have
\begin{equation}\aligned
\De_{N}\mathcal{R}^2-2n|\overline{\na}\mathcal{R}|^2=2+2n\f{t^2}{\mathcal{R}^2}\left(|\na \tilde{b}|^2-1\right),
\endaligned
\end{equation}
\begin{equation}\aligned\label{rRnaR}
\limsup_{r\rightarrow\infty}\left(\sup_{\p \mathbb{B}_r}\left|\f {\mathcal{R}}r-1\right|+\sup_{\p \mathbb{B}_r}\Big|\left|\overline{\na} {\mathcal{R}}\right|-1\Big|\right)=0,
\endaligned
\end{equation}
and
\begin{equation}\aligned\label{rHR2}
\limsup_{r\rightarrow\infty}\left(\sup_{\p B_r\times\R}\left|\overline{\mathrm{Hess}}_{\mathcal{R}^2}-2\bar{g}\right|\right)=0.
\endaligned
\end{equation}
Let $\nu$ be the unit normal vector on $M$ as before.
A simple calculation gives
\begin{equation}\aligned\label{DeR2}
\De\mathcal{R}^2=&\De_N\mathcal{R}^2-\overline{\mathrm{Hess}}_{\mathcal{R}^2}(\nu,\nu)=2n|\na \tilde{b}|^2+2-\overline{\mathrm{Hess}}_{\mathcal{R}^2}(\nu,\nu)\\
=&2n|\overline{\na}\mathcal{R}|^2+2+2n\f{t^2}{\mathcal{R}^2}\left(|\na \tilde{b}|^2-1\right)-\overline{\mathrm{Hess}}_{\mathcal{R}^2}(\nu,\nu).
\endaligned
\end{equation}
Since $M$ is an entire graph,
$$\lim_{r_i\rightarrow\infty}\left(\inf\left\{d(x)\Big|\ \mathrm{there\ is\ a}\ t\in\R \mathrm{\ such\ that\ } (x,t)\in M\setminus \mathbb{B}_{\sqrt{r_i}}\right\}\right)=\infty.$$
Combining \eqref{rHR2}\eqref{DeR2}\eqref{br} there exists a sequence $\de_i\rightarrow0^+$ such that on $M\setminus \mathbb{B}_{\sqrt{r_i}}$ we have
\begin{equation}\aligned
\left|\De\mathcal{R}^2-2n|\overline{\na}\mathcal{R}|^2\right|\le2\de_i|\overline{\na}\mathcal{R}|^2.
\endaligned
\end{equation}

Obviously $\Si\times\R$ has nonnegative Ricci curvature. Therefore,  $Vol(\p \mathbb{B}_r)\le|\S^n|r^n$, where $|\S^n|$ is the volume of $n$-dimensional unit sphere in $\R^{n+1}$. Since $M$ is an area-minimizing hypersurface in $\Si\times\R$ by Lemma 2.1 in \cite{DJX}, then
$$Vol(M\cap\mathbb{B}_r)\le\f12Vol(\p\mathbb{B}_r)\le\f{|\S^n|}2r^n.$$
With \eqref{rRnaR},
$$r^{-n}\int_{M\cap\{\mathcal{R}\le r\}}|\overline{\na}\mathcal{R}|^2d\mu$$
is uniformly bounded for any $r\in(0,\infty)$, and so,  there exists a sequence $r_i\rightarrow\infty$ such that
$$\limsup_{r\rightarrow\infty}\left(r^{-n}\int_{M\cap\{\mathcal{R}\le r\}}|\overline{\na}\mathcal{R}|^2d\mu\right)
=\lim_{r_i\rightarrow\infty}\left(r_i^{-n}\int_{{M\cap\{\mathcal{R}\le r_i\}}}|\overline{\na}\mathcal{R}|^2d\mu\right).$$

With  the proof of Lemma 5.2 in \cite{DJX}, we obtain the following Lemma.
\begin{lemma}\label{limfdR}
There is a sequence $\de_i\rightarrow0^+$ such that for any constants $K_2>K_1>0$ and $\ep\in(0,1)$ and any bounded Lipschitz function $f$ on $N\setminus \mathbb{B}_1$ we have
\begin{equation}\aligned
&\limsup_{i\rightarrow\infty}\left|\left(\f{\de_i}{K_2r_i}\right)^{n}\int_{M\cap\{\mathcal{R}\le \f{K_2r_i}{\de_i}\}}f|\overline{\na}\mathcal{R}|^2-\left(\f{\de_i}{K_1r_i}\right)^{n}\int_{M\cap\{\mathcal{R}\le \f{K_1 r_i}{\de_i}\}}f|\overline{\na}\mathcal{R}|^2\right|\\
\le&C\ep^n\sup_{N\setminus \mathbb{B}_1}|f|+\limsup_{i\rightarrow\infty}\int_{\f{K_1 r_i}{\de_i}}^{\f{K_2r_i}{\de_i}}\left(s^{-n-1}\int_{M\cap\{\f{\ep K_1r_i}{\de_i}<\mathcal{R}\le s\}}\mathcal{R}\na f\cdot\na\mathcal{R}\right)ds.
\endaligned
\end{equation}
\end{lemma}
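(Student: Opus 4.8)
\emph{Proof sketch.} The plan is to follow the method of Lemma 5.2 in \cite{DJX}, adapting it to the weight $f|\overline{\na}\mathcal{R}|^2$ and to the ambient product $N=\Si\times\R$, which no longer satisfies C3). Write $E:=\De\mathcal{R}^2-2n|\overline{\na}\mathcal{R}|^2$ on $M$; by \eqref{DeR2}, \eqref{br} and \eqref{rHR2} this $E$ is bounded on $M$, and since $M$ is an entire graph one has $|E(z)|\to0$ as $\mathcal{R}(z)\to\infty$, with the minimal admissible $\de_i$ above comparable to $\sup_{M\setminus\mathbb{B}_{\sqrt{r_i}}}|E|$ (which we fix as $\de_i$). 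As $M$ is area-minimizing in $N$ and $N$ has nonnegative Ricci curvature, the paragraph preceding the Lemma yields $|\De\mathcal{R}^2|\le C$ and $Vol(M\cap\{\mathcal{R}\le s\})\le Cs^n$ for all $s>0$. After extending $f$ to a bounded Lipschitz function on $N$, the first step would be to test the identity $\De\mathcal{R}^2=2n|\overline{\na}\mathcal{R}|^2+E$ on $M$ against $\gamma(\mathcal{R})f$ for a cut-off $\gamma$ equal to $1$ on $[0,s]$ and to $0$ on $[s+\tau,\infty)$, integrate by parts (the sets $M\cap\{\mathcal{R}\le\sigma\}$ being compact), use $\na\mathcal{R}^2=2\mathcal{R}\na\mathcal{R}$ and $|\na\mathcal{R}|^2=|\overline{\na}\mathcal{R}|^2-\lan\overline{\na}\mathcal{R},\nu\ran^2$, and let $\tau\downarrow0$ with the coarea formula, obtaining for a.e.\ $s>0$ the weighted approximate monotonicity identity
\begin{align*}
\frac{d}{ds}\left(s^{-n}\int_{M\cap\{\mathcal{R}\le s\}}f|\overline{\na}\mathcal{R}|^2\right)
=&\ s^{-n}\int_{M\cap\{\mathcal{R}=s\}}\frac{f\,\lan\overline{\na}\mathcal{R},\nu\ran^2}{|\na\mathcal{R}|}\,d\mathcal{H}^{n-1}\\
&+s^{-n-1}\int_{M\cap\{\mathcal{R}\le s\}}\mathcal{R}\,\na f\cdot\na\mathcal{R}+\frac12\,s^{-n-1}\int_{M\cap\{\mathcal{R}\le s\}}f\,E .
\end{align*}

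\noindent Next we would integrate this over $s\in[a_i,b_i]$ with $a_i:=K_1r_i/\de_i$, $b_i:=K_2r_i/\de_i$ (note $b_i/a_i=K_2/K_1$ is fixed while $a_i,b_i\gg r_i\gg\sqrt{r_i}$), writing the left-hand side of the claimed inequality as $(I)+(II)+(III)$: here, by coarea, $(I)=\int_{M\cap\{a_i<\mathcal{R}<b_i\}}\mathcal{R}^{-n}f\lan\overline{\na}\mathcal{R},\nu\ran^2$ is a ``radial-defect'' term, $(II)=\int_{a_i}^{b_i}s^{-n-1}\big(\int_{M\cap\{\mathcal{R}\le s\}}\mathcal{R}\,\na f\cdot\na\mathcal{R}\big)ds$, and $(III)=\tfrac12\int_{a_i}^{b_i}s^{-n-1}\big(\int_{M\cap\{\mathcal{R}\le s\}}fE\big)ds$. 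For $(III)$, I would split $\{\mathcal{R}\le s\}$ at $\mathcal{R}=\sqrt{r_i}$: outside $\mathbb{B}_{\sqrt{r_i}}$ one has $|E|\le 2\de_i|\overline{\na}\mathcal{R}|^2\le C\de_i$, contributing $\le C\de_i\sup|f|\log(K_2/K_1)\to0$, and on $\{\mathcal{R}\le\sqrt{r_i}\}$ the integrand is bounded with volume $\le Cr_i^{n/2}$, contributing $\le C\sup|f|\,r_i^{n/2}a_i^{-n}=C\sup|f|\,\de_i^n r_i^{-n/2}\to0$, so $(III)\to0$. For $(II)$, write $\int_{M\cap\{\mathcal{R}\le s\}}=\int_{M\cap\{\mathcal{R}\le c_i\}}+\int_{M\cap\{c_i<\mathcal{R}\le s\}}$ with $c_i:=\ep K_1r_i/\de_i=\ep a_i$; the second summand is exactly the term on the right of the claimed inequality, and for the first an integration by parts removing the weight $\mathcal{R}$ gives $\int_{M\cap\{\mathcal{R}\le c_i\}}\mathcal{R}\na f\cdot\na\mathcal{R}=c_i\int_{M\cap\{\mathcal{R}=c_i\}}f|\na\mathcal{R}|-\tfrac12\int_{M\cap\{\mathcal{R}\le c_i\}}f\De\mathcal{R}^2$, both terms being $\le C\sup|f|\,c_i^n$ by $|\De\mathcal{R}^2|\le C$, the volume bound, and the a.e.-valid coarea bound $\mathcal{H}^{n-1}(M\cap\{\mathcal{R}=c_i\})\le Cc_i^{n-1}$; hence the inner part of $(II)$ is $\le C\sup|f|\,c_i^na_i^{-n}=C\ep^n\sup|f|$.

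\noindent The remaining and hardest step is to show $(I)\to0$ (it cannot be absorbed into the $\ep$-small right-hand side, since $\int_{M\cap\{a_i<\mathcal{R}<b_i\}}\mathcal{R}^{-n}\lan\overline{\na}\mathcal{R},\nu\ran^2\ge0$ is $\ep$-independent). For this I would apply the monotonicity identity with $f\equiv1$: $\Psi(s):=s^{-n}\int_{M\cap\{\mathcal{R}\le s\}}|\overline{\na}\mathcal{R}|^2$ has derivative equal to the nonnegative boundary term $s^{-n}\int_{M\cap\{\mathcal{R}=s\}}\lan\overline{\na}\mathcal{R},\nu\ran^2/|\na\mathcal{R}|$ plus an error estimated as in $(III)$, so that $\Psi(\sigma')-\Psi(\sigma)\ge\int_{M\cap\{\sigma<\mathcal{R}<\sigma'\}}\mathcal{R}^{-n}\lan\overline{\na}\mathcal{R},\nu\ran^2-\mathcal{E}(\sigma,\sigma')$ for $r_i<\sigma<\sigma'$, where $|\mathcal{E}(r_i,b_i)|\lesssim\big(\sup_{M\cap\{\mathcal{R}\ge r_i\}}|E|\big)\log(b_i/r_i)+o(1)\lesssim\de_i\log(1/\de_i)+o(1)\to0$ — here the logarithmic window $[r_i,b_i]=[r_i,K_2r_i/\de_i]$ and the comparability of $\de_i$ with the decay rate of $E$ near scale $\sqrt{r_i}$ are exactly what keep the accumulated error negligible. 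Together with $\Psi(r_i)\to L:=\limsup_{r\to\infty}r^{-n}\int_{M\cap\{\mathcal{R}\le r\}}|\overline{\na}\mathcal{R}|^2$ (the defining property of $r_i$), $\limsup_i\Psi(b_i)\le L$, $a_i>r_i$ and nonnegativity of the boundary term, this would give
$$0\le\limsup_{i\to\infty}\int_{M\cap\{a_i<\mathcal{R}<b_i\}}\mathcal{R}^{-n}\lan\overline{\na}\mathcal{R},\nu\ran^2\le\limsup_{i\to\infty}\big(\Psi(b_i)-\Psi(r_i)-\mathcal{E}(r_i,b_i)\big)=0,$$
whence $(I)\to0$. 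Collecting the bounds on $(I)$, $(II)$, $(III)$ then proves the Lemma. The main obstacle is precisely this last step: one must reconcile the one-sided approximate monotonicity furnished by area-minimality with the choice of $r_i$ realizing the $\limsup$ and with the rate-dependent smallness of $\de_i$, so that errors accumulated over a logarithmically long scale range still vanish in the limit.
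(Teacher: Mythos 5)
Your proof is correct and follows what is essentially the paper's own route: the paper establishes this lemma by invoking the proof of Lemma 5.2 in \cite{DJX}, which runs exactly as you describe — the approximate monotonicity identity for $s^{-n}\int_{M\cap\{\mathcal{R}\le s\}}f|\overline{\na}\mathcal{R}|^2$ coming from $\De\mathcal{R}^2=2n|\overline{\na}\mathcal{R}|^2+E$ with $|E|\le 2\de_i|\overline{\na}\mathcal{R}|^2$ outside $\mathbb{B}_{\sqrt{r_i}}$, the splitting of the $\na f\cdot\na\mathcal{R}$ contribution at the scale $\ep K_1 r_i/\de_i$, and the elimination of the nonnegative radial-defect term via almost-monotonicity combined with the choice of $r_i$ realizing the $\limsup$ of the density $r^{-n}\int_{M\cap\{\mathcal{R}\le r\}}|\overline{\na}\mathcal{R}|^2$. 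Apart from routine points (choosing a.e.\ regular levels in the coarea/boundary estimates, and the fact that your triangle-inequality assembly really bounds the left side by $C\ep^n\sup|f|$ plus the $\limsup$ of the absolute value of the last term, which is the form in which the lemma is used later anyway), nothing is missing.
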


There is a subsequence $\{\ep_i\}$ of $\{\de_i^2r_i^{-2}\}$ converging to zero such that  $\ep_i\Si=(\Si,\ep_i \si,p)$ converges to a metric cone $(\Si_{\infty},d_{\infty})$ with vertex $o$ in the measured Gromov-Hausdorff sense. Denote $\ep_i=\de_i^2r_i^{-2}$ for simplicity. The cone $\Si_{\infty}$ is over some $(n-1)$-dimensional smooth compact manifold $X$ with $C^{1,\a}$ Riemannian metric and Diam $X\le\pi$, namely, $\Si_{\infty}=CX\triangleq\R^+\times_{\r}X$.


Let $B^i_r(x)$ be the geodesic ball with radius $r$ and centered at
$x$ in $(\Si,\ep_i\si)$, and $\mathcal{B}_r(x)$ be the geodesic ball
with radius $r$ and centered at $x$ in $\Si_{\infty}$. In particular, $X=\p \mathcal{B}_1(o)$. Note that for convenience our definitions of $B^i_r(x)$ are different from the previous ones in section 4.
Let $\mathbb{B}^i_r(x)$ be the geodesic ball with radius $r$ and centered at $x$ in $(\Si\times\R,\ep_i(\si+dt^2))$, and $\widetilde{\mathcal{B}}_r(x)$ be the geodesic ball with radius $r$ and centered at $x$ in $\Si_{\infty}\times\R$.

Let $\ep_iM=(M,\ep_i g)$ and $D^i_r(x)=\ep_iM\cap\mathbb{B}^i_r(x)$. We always omit $x$ in $D^i_r(x)$ (or $\mathbb{B}^i_r(x), \mathcal{B}_r(x), \widetilde{\mathcal{B}}_r(x)$) if $x=p$ (or $x=(p,0), o, (o,0))$ respectively, for simplicity. Clearly, $\ep_iM$ is still a minimal graph in $(\Si\times\R,\ep_i(\si+dt^2))$.
\begin{lemma}
There exists a subsequence $\{\ep_{i_j}\}\subset\{\ep_i\}$ such that $\ep_{i_j}M$ converges to an area-minimizing cone $T=CY\triangleq\R^+\times_{\r}Y$ in $\Si_{\infty}\times\R$, where $Y\in \p\widetilde{\mathcal{B}}_1(o)$ is an $(n-1)$-dimensional Hausdorff set.
\end{lemma}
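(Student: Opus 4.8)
The plan is to mimic the argument of \cite{DJX}, replacing the exact monotonicity formula (which is unavailable here, since $\Si\times\R$ fails condition C3) unless $\Si$ is flat) by the asymptotic behaviour of the function $\mathcal{R}$ together with the approximate monotonicity recorded in Lemma \ref{limfdR}.

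\emph{Step 1 (compactness).} Recall that $M=\p\Om$ where $\Om=\{(x,t)\in\Si\times\R:\ t<u(x)\}$ is a set of locally finite perimeter, and that $M$ is area-minimizing in $\Si\times\R$ by Lemma 2.1 of \cite{DJX}. The bound $Vol(M\cap\mathbb{B}_r)\le\f{|\S^n|}2r^n$, together with the ambient nonnegative Ricci curvature and condition C2), yields after rescaling uniform perimeter bounds $Vol(D^i_r(z))\le\f{|\S^n|}2r^n$ on fixed balls, while the monotonicity of the density ratio at interior points (together with $(p,0)\in\ep_iM$, after normalising $u(p)=0$) gives a matching positive lower bound away from the cone vertex, so no collapsing occurs. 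Passing to a subsequence $\{\ep_{i_j}\}$, the sets $\ep_{i_j}\Om$ converge in $L^1_{\mathrm{loc}}$ to a set $\Om_\infty$ of locally finite perimeter in $\Si_\infty\times\R$, and a standard cut-and-paste competitor argument (exactly as in \cite{DJX}) shows that $T\triangleq\p\Om_\infty$ is area-minimizing in $\Si_\infty\times\R$ and that $\ep_{i_j}M\to T$ with mass convergence.

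\emph{Step 2 (the ambient space is a cone, and $\mathcal{R}$ is its radial function).} Since $\Si_\infty=CX$ is a metric cone with vertex $o$, the product $\Si_\infty\times\R$ is again a metric cone with vertex $(o,0)$: writing $\rho=\sqrt{s^2+t^2}$ in terms of the $\Si_\infty$-radius $s$ and the $\R$-coordinate $t$, its metric is $d\rho^2+\rho^2 g_{\widehat X}$ over the spherical suspension $\widehat X=\p\widetilde{\mathcal{B}}_1(o)$ of $X$. By \eqref{rRnaR} and \eqref{rHR2}, the rescaled functions $\mathcal{R}$ converge to the distance $\bar\rho$ from $(o,0)$ in $\Si_\infty\times\R$, with $|\overline{\na}\bar\rho|=1$ and, via \eqref{DeR2}, $\De\mathcal{R}^2\to 2n$ along $T$; in particular the measure $|\overline{\na}\mathcal{R}|^2\,d\mu$ on $\ep_{i_j}M$ converges to $d\mu$ on $T$.

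\emph{Step 3 (conicality) and conclusion.} Apply Lemma \ref{limfdR} with $f\equiv1$. The last term, involving $\mathcal{R}\,\na f\cdot\na\mathcal{R}$, then vanishes, and the term $C\ep^n\sup|f|$ is arbitrarily small, so for all $0<K_1<K_2$ the quantities $\left(\f{\de_i}{K r_i}\right)^n\int_{M\cap\{\mathcal{R}\le Kr_i/\de_i\}}|\overline{\na}\mathcal{R}|^2$ with $K=K_1,K_2$ have the same limit as $i\to\infty$. By Step 2 this limit is, up to a fixed positive factor, the constant value of $s\mapsto s^{-n}Vol(T\cap\{\bar\rho\le s\})$, so the density ratio of $T$ about $(o,0)$ is constant on $(0,\infty)$. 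The rigidity (equality case) of the monotonicity formula for area-minimizing boundaries in the metric cone $\Si_\infty\times\R$ then forces $T$ to be invariant under dilations about $(o,0)$, i.e.\ $T=CY$ with $Y=T\cap\p\widetilde{\mathcal{B}}_1(o)$; being a codimension-one area-minimizing cone, $Y$ carries locally finite $\mathcal{H}^{n-1}$-measure, which is the meaning of ``$(n-1)$-dimensional Hausdorff set''. The main obstacle is Step 3: one must justify that no mass is lost in $\ep_{i_j}M\to T$, so that $|\overline{\na}\mathcal{R}|^2d\mu$ genuinely limits onto $d\mu$ (this is where one controls the ``bad'' region $\{|Du|$ large$\}$ as in \cite{DJX}), and one must invoke the cone-monotonicity rigidity on the a priori singular space $\Si_\infty\times\R$, using precisely the Hessian asymptotics \eqref{rHR2} of $\mathcal{R}$ in place of the true distance; Steps 1 and 2 and the final structural statement are bookkeeping parallel to \cite{DJX}.
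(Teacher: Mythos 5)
Your Steps 1 and 2 follow the paper: compactness of area-minimizing boundaries plus the cut-and-paste competitor argument (carried out in the paper with the maps $\Upsilon_i$ on annuli and a cut around the axis region $\mathcal{B}_\de\times[-R,R]$) give the limit $T$ and its minimality, and \eqref{rRnaR}, \eqref{rHR2}, \eqref{DeR2} identify $\mathcal{R}$ with the radial function of the cone $\Si_\infty\times\R$. The divergence is in Step 3, and that is where your argument has a genuine gap. Applying Lemma \ref{limfdR} only with $f\equiv1$ yields (granting mass convergence) that $s^{-n}\mathcal{H}^n\big(T\cap\widetilde{\mathcal{B}}_s\big)$ is constant, and you then invoke ``the rigidity (equality case) of the monotonicity formula for area-minimizing boundaries in the metric cone $\Si_\infty\times\R$''. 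But no such rigidity statement is available off the shelf here: the cross-section $\p\widetilde{\mathcal{B}}_1(o)$ is the spherical suspension of $X$, so it carries only a $C^{1,\a}$ metric with two conical singular points, the ambient cone contains the singular line $\{o\}\times\R$, and (as the proof of Lemma \ref{HnEiepn} shows) $T$ may actually contain a half of that line when $\sup_\Si|Du|=\infty$. To make your route work you would have to (i) show that no mass of $\ep_{i_j}M$ concentrates on the axis cylinder, so that the limits of $\left(\f{\de_i}{Kr_i}\right)^n\int_{M\cap\{\mathcal{R}\le Kr_i/\de_i\}}|\overline{\na}\mathcal{R}|^2$ really equal the density ratios of $T$, and (ii) establish monotonicity together with its equality case for $T$ centred at $(o,0)$, which requires proving that stationarity of $T$ can be tested against the radial dilation field across the singular axis (a cut-off/capacity argument using the codimension of the axis and the Euclidean-type area bounds) and that $\overline{\mathrm{Hess}}_{\bar{\r}^2}=2\bar{g}$ holds $\|T\|$-almost everywhere. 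You flag both points as ``the main obstacle'' but do not carry them out; they are precisely the content of the lemma, not bookkeeping.

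The paper is organized exactly so as to avoid any rigidity theorem on the singular limit space: it applies Lemma \ref{limfdR} with an \emph{arbitrary} $0$-homogeneous test function $f$ on $\p\widetilde{\mathcal{B}}_1$, and the real technical work is the estimate \eqref{IFiR2} for the error term $\int\mathcal{R}\,\na F_i\cdot\na\mathcal{R}$ produced by the extension $F_i$ of $f$ across the axis region: integration by parts over $\Om_{s,i,\tau}$, a good choice of slice $\tau_i\in[\ep,2\ep]$ via the coarea and volume bounds for the area-minimizing $M$, and the boundedness of $\De\mathcal{R}^2$. This gives $K^{-n}\int_{T\cap\widetilde{\mathcal{B}}_{K}}f$ independent of $K$ for every $f$, which forces $T$ to be a cone directly, with no appeal to an equality case of monotonicity. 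So either supply the two missing ingredients (i) and (ii) above in the singular setting, or follow the paper and prove scale invariance of all angular moments, for which the non-constant-$f$ case of Lemma \ref{limfdR}, and hence an estimate of the type \eqref{IFiR2}, is unavoidable.
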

\begin{proof}
For any fixed $r>1$ let $\Upsilon_i:\ \left(\overline{\mathcal{B}}_{r+1}\setminus \mathcal{B}_{\f1{2r}}\right)\times\R\rightarrow \ep_i\Si\times\R$ be a mapping defined by $\Upsilon_i(x,t)=(\Phi_i(x),t)\in\ep_i\Si\times\R$, where $\Phi_i$ is a diffeomorphism from $\overline{\mathcal{B}}_{r+1}\setminus \mathcal{B}_{\f1{2r}}$ to $\Phi_i(\overline{\mathcal{B}}_{r+1}\setminus \mathcal{B}_{\f1{2r}})\subset\ep_i\Si$ such that $\Phi_i^*(\ep_i \si)$ converges as $i\rightarrow\infty$ to $\si_{\infty}$ in the $C^{1,\a}$-topology on $\overline{\mathcal{B}}_{r+1}\setminus \mathcal{B}_{\f1{2r}}$.
Thus $\Upsilon_i^*(\ep_i \si+\ep_idt^2)$ converges as
$i\rightarrow\infty$ to $\sigma_{\infty}+dt^2$ in the
$C^{1,\a}$-topology on $\widetilde{\mathcal{B}}_{r+1}\setminus
\widetilde{\mathcal{B}}_{\f1{2r}}$. By compactness of currents (see
\cite{AK}, \cite{LY}, \cite{S} or \cite{DJX}), there is a subsequence of $\ep_{i_j}$ such that
$$\Upsilon_{i_j}^{-1}\left(\ep_{i_j}M\bigcap\left(B^{i_j}_r\setminus B^{i_j}_{\f1r}\right)\times\R\right)\rightharpoonup T\qquad \mathrm{as}\ j\rightarrow\infty,$$
where $T$ is an integral-rectifiable current in
$\Si_{\infty}\times\R$. By choosing a diagonal sequence, we can assume that the above limit holds for any $r>1$. For convenience, we still write $\ep_i$  instead of $\ep_{i_j}$.

Let $\Om_0$ be an arbitrary bounded domain in $T$, and $W_0$ be an arbitrary bounded set with induced metric in $\Si_\infty\times\R$ with $\p\Om_0=\p W_0$. There is a constant $R>0$ such that $\Om_0\cup W_0\subset \widetilde{\mathcal{B}}_R$.
For any small $\de>0$ let $\Om_i=\Upsilon_i(\Om_0\setminus(\mathcal{B}_{\de}\times[-R,R]))\subset \ep_iM$ and $W_i=\Upsilon_i(W_0\setminus(\mathcal{B}_{\de}\times[-R,R]))$ with induced metrics in $\ep_iN$. Then there exists $U_0\subset\p(\mathcal{B}_{\de}\times[-R,R])$ (possibly empty) such that $\p\Om_i=\p(W_i\cup U_i)$ with $U_i=\Upsilon_i(U_0)\subset\ep_iN$. Since $\ep_iM$ is an area-minimizing hypersurface in $\ep_iN$, then
\begin{equation}\aligned
H^n(\Om_0\setminus(\mathcal{B}_{\de}\times[-R,R]))=&\lim_{i\rightarrow\infty}H^n(\Om_i)\le\lim_{i\rightarrow\infty}H^n(W_i\cup U_i)\\
\le&\lim_{i\rightarrow\infty}H^n(W_i)+\lim_{i\rightarrow\infty}H^n(U_i)
=H^n(W_0\setminus\widetilde{\mathcal{B}}_{\de})+H^n(U_0)\\
\le& H^n(W_0)+H^n(\p(\mathcal{B}_{\de}\times[-R,R])).
\endaligned
\end{equation}
Let $\de\rightarrow0$ to obtain
$$H^n(\Om_0)\le H^n(W_0).$$
Namely, $T$ is an area-minimizing set in $\Si_{\infty}\times\R$.

For any $f\in C^1(\p\widetilde{\mathcal{B}}_1)$, we could extend $f$ to $\Si_{\infty}\times\R\setminus\{(o,0)\}$ by defining
$$f(\r\th)=f(\th)$$
for any $\r>0$ and $\th\in\p\widetilde{\mathcal{B}}_1$. Let $\Pi_i$ be the map of rescaling from $(N,\si+dt^2)$ to $\ep_iN=(N,\ep_i\si+\ep_idt^2)$.
Set $U_{s}=B_s\times\R$ for $s>0$.
Note $\ep_i=\de_i^2r_i^{-2}$, then similar to the proof of (4.12) and (4.13) in \cite{DJX}, for any $K_2>K_1>0$ we have
\begin{equation}\aligned
\limsup_{i\rightarrow\infty}\sup_{\mathbb{B}_{\f{K_2r_i}{\de_i}}\setminus U_{\f{\ep K_1r_i}{\de_i}}}
\left|\left\lan\overline{\na}(f\circ\Upsilon_i^{-1}\circ\Pi_i),\overline{\na}\mathcal{R}^2\right\ran\right|=0,
\endaligned
\end{equation}
and
\begin{equation}\aligned
\limsup_{i\rightarrow\infty}\sup_{\mathbb{B}_{\f{K_2r_i}{\de_i}}\setminus U_{\f{\ep K_1r_i}{\de_i}}}
\left(\mathcal{R}\left|\overline{\na}(f\circ\Upsilon_i^{-1}\circ\Pi_i)\right|\right)<\infty,
\endaligned
\end{equation}
Now we can extend the function $f\circ\Upsilon_i^{-1}\circ\Pi_i$ to a
uniformly bounded function $F_i$ in
$\mathbb{B}_{\f{K_2r_i}{\de_i}}\setminus U_{\f{\ep K_1r_i}{\de_i}}$
with $F_i=f\circ\Upsilon_i^{-1}\circ\Pi_i$ on
$\mathbb{B}_{\f{K_2r_i}{\de_i}}\setminus U_{\f{\ep
    K_1r_i}{\de_i}}$. Obviously, we can extend $F_i$ to a $C^1$-function on $\mathbb{B}_{\f{K_2r_i}{\de_i}}\cap U_{\f{\ep K_1r_i}{\de_i}}$ with $|F_i|\le 2|f_0|_{C^0(\p \widetilde{\mathcal{B}}_1)}$.

Note $\mathcal{R}^2(x,t)=\tilde{b}^2(x)+t^2$ for any $(x,t)\in\Si\times\R$.
Due to the proof of Lemma 5.3 in \cite{DJX}, it is sufficient to show that there is a sequence $\tau_i\in[\ep,2\ep]$ so that
\begin{equation}\aligned\label{IFiR2}
&\limsup_{i\rightarrow\infty}\int_{\f{K_1 r_i}{\de_i}}^{\f{K_2r_i}{\de_i}}\left(\f1{s^{n+1}}\int_{M\cap\{\f{\tau_i K_1 r_i}{\de_i}<\mathcal{R}\le s\}\cap \left(\{\tilde{b}\le\f{\tau_i K_1r_i}{\de_i}\}\times\R\right)}\mathcal{R}\na F_i\cdot\na\mathcal{R}\right)ds<C\ep\\
\endaligned
\end{equation}
for some absolute constant $C>0$.

We show  (\ref{IFiR2}) by the following consideration.

Let $\Om_{s,i,\tau}=M\cap\{\f{\tau K_1 r_i}{\de_i}<\mathcal{R}\le s\}\cap \left(\{\tilde{b}\le\f{\tau K_1r_i}{\de_i}\}\times\R\right)$ for $s\in\left(\f{K_1 r_i}{\de_i},\f{K_2 r_i}{\de_i}\right)$. Integrating by parts implies
\begin{equation}\aligned
&\int_{\Om_{s,i,\tau}}\na F_i\cdot\na\mathcal{R}^2+\int_{\Om_{s,i,\tau}}F_i\De\mathcal{R}^2=\int_{\Om_{s,i,\tau}}\mathrm{div}_M\left(F_i\na\mathcal{R}^2\right)\\
=&\int_{\p\Om_{s,i,\tau}}F_i\lan\na\mathcal{R}^2,\nu_{\p\Om_{s,i,\tau}}\ran\le\int_{\p\Om_{s,i,\tau}}F_i|\na\mathcal{R}^2|\le C_1s\int_{\p\Om_{s,i,\tau}}1
\endaligned
\end{equation}
for some absolute constant $C_1$. Recall $|\overline{\na}\mathcal{R}|\le\left(\f{\omega_n}{V_{\Si}}\right)^{\f1{n-2}}$. It is easy to see that
\begin{equation}\aligned
&\left(\f{V_{\Si}}{\omega_n}\right)^{\f1{n-2}}\f{K_1r_i}{\de_i}\int_\ep^{2\ep}\left(\int_{\p\Om_{s,i,\tau}}1\right)d\tau
\le\f{K_1r_i}{\de_i}\int_\ep^{2\ep}\left(\int_{\p\Om_{s,i,\tau}}\f1{|\na\mathcal{R}|}\right)d\tau\\
\le& Vol\left(M\cap\{\mathcal{R}\le s\}\cap \left(\left\{\f{\ep K_1r_i}{\de_i}\le\tilde{b}\le\f{2\ep K_1r_i}{\de_i}\right\}\times\R\right)\right)\\
&+Vol\left(M\cap\left\{\f{\ep K_1r_i}{\de_i}\le\mathcal{R}\le\f{2\ep K_1r_i}{\de_i}\right\}\right)\\
\le& Vol\left(\p\left(\{\mathcal{R}\le s\}\cap \left(\left\{\f{\ep K_1r_i}{\de_i}\le\tilde{b}\le\f{2\ep K_1r_i}{\de_i}\right\}\times\R\right)\right)\right)\\
&+Vol\left(\p\left(\left\{\f{\ep K_1r_i}{\de_i}\le\mathcal{R}\le\f{2\ep K_1r_i}{\de_i}\right\}\right)\right)\\
\le& C_2s\left(\f{\ep K_1r_i}{\de_i}\right)^{n-1}
\endaligned
\end{equation}
for some absolute constant $C_2$. Hence, for every $i$ there exists a $\tau_i\in[\ep,2\ep]$ such that
\begin{equation}\aligned
\int_{\p\Om_{s,i,\tau_i}}1\le C_3s\left(\f{\ep K_1r_i}{\de_i}\right)^{n-2}.
\endaligned
\end{equation}
Through a simple calculation we have
\begin{equation}\aligned
&\left|\int_{\Om_{s,i,\tau_i}}F_i\De\mathcal{R}^2\right|\le C_4Vol\left(\Om_{s,i,\tau_i}\right)\\
\le& C_4Vol\left(\p\left(\left\{\f{\tau K_1 r_i}{\de_i}<\mathcal{R}\le s\right\}\bigcap \left(\left\{\tilde{b}\le\f{\tau K_1r_i}{\de_i}\right\}\times\R\right)\right)\right)\\
\le& C_5s\left(\f{\ep K_1r_i}{\de_i}\right)^{n-1}.
\endaligned
\end{equation}
Hence
\begin{equation}\aligned
&\int_{\Om_{s,i,\tau_i}}\na F_i\cdot\na\mathcal{R}^2\le-\int_{\Om_{s,i,\tau_i}}F_i\De\mathcal{R}^2+ C_1s\int_{\p\Om_{s,i,\tau_i}}1\le C_6s^2\left(\f{\ep K_1r_i}{\de_i}\right)^{n-2},
\endaligned
\end{equation}
which implies that \eqref{IFiR2} holds. Then, as in Lemma 5.3 in \cite{DJX}, we can show that
\begin{equation}
\f1{K_2^{n}}\int_{T\cap\widetilde{\mathcal{B}}_{K_2}}f=\f1{K_1^{n}}\int_{T\cap\widetilde{\mathcal{B}}_{K_1}}f
\end{equation}
for  arbitrary $f.$  This means $T$ is a cone in $\Si_{\infty}\times\R$.  Therefore, we complete the proof.
\end{proof}

Set $s_i=(\ep_i)^{-\f12}$ for convenience. 
The definitions of $D^{s_i},\tilde{u}_{s_i},\lan\cdot,\cdot\ran_{s_i},|\cdot|_{s_i}$ are as in section 4. We define $\r_\infty(x)=d_\infty(o,x)$ and $\bar{\r}_\infty(x)=\bar{d}_\infty(o,x)$ as distance functions on $\Si_\infty$ and $N_\infty$, respectively. Let $\r_i(x)$ be the distance function on $\ep_i\Si$ from $o$ to $x$, and $\bar{\r}_i(z)$ be the distance function on $\ep_i(\Si\times\R)$ from $(o,0)$ to $z$.

For each $x\in\ep_i \Si$ there is a minimal normal geodesic $\g^i_x$
from $p$ to $x$ such that  $D^{s_i}\r_i(x)=\dot{\g^i_x}$. When
$\ep_i=1$, we define $D\r(x)$ corresponding to the normal geodesic
$\dot{\g_x}$. Hence $D^{s_i}\r_i(x)$ depends on the choice of $\g^i_x$. Note that $\r_i(x)$ is just a Lipschitz function on $\ep_i\Si$, but the
definition of $D^{s_i}\r_i(x)$ is equivalent to the common one if $\r_i$ is $C^1$ at the considered point.

Let $T_i=\ep_iM\bigcap\left((B^i_2\setminus B^i_{\ep})\times\left[\f1\de,\f1\de+\f2\ep\right]\right)$ for $\de>0$ and a 'bad' set
\begin{equation}\aligned\label{EiTi}
E_i\triangleq &\left\{z=(x,t)\in T_i\bigg|\ |D^{s_i}\tilde{u}_{s_i}(x)|_{s_i}\le\f1\ep\ \mathrm{or}\ \left| \left(D^{s_i}\tilde{u}_{s_i}(x)\right)^T\right|_{s_i}^2\le\left(1-\ep^4\right)\left|D^{s_i}\tilde{u}_{s_i}(x)\right|_{s_i}^2\right\},
\endaligned
\end{equation}
where $\xi^T=\xi-\left\lan \xi,D^{s_i}\r_i\right\ran_{s_i} D^{s_i}\r_i$ for any local vector field $\xi$ on $\ep_i\Si$.
\begin{lemma}\label{HnEiepn}
Suppose $\sup_\Si|Du|=\infty$.
For any $\ep>0$ there is a sufficiently small $\de_0$ such that for any $0<\de<\de_0$ there is a sufficiently large $i_0$ so that for $i\ge i_0$ we have
$$H^n(E_i)<\ep^{n}.$$
\end{lemma}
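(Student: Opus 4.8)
The plan is to rewrite the two conditions defining $E_i$ in terms of the upward unit normal $\nu_i$ of the rescaled minimal graph $\ep_iM$ in $\ep_iN$ (all norms and inner products below being those of $\ep_iN$, and the rescaled unit vertical still written $E_{n+1}$), and then to transfer the estimate to the limiting area-minimizing cone $T=CY$. Since $\nu_i=v^{-1}(-D^{s_i}\tilde{u}_{s_i}+E_{n+1})$ with $v=\sqrt{1+|D^{s_i}\tilde{u}_{s_i}|_{s_i}^2}=\sqrt{1+|Du|^2}$ scale invariant, the first condition $|D^{s_i}\tilde{u}_{s_i}(x)|_{s_i}\le\f1\ep$ is equivalent to $\lan E_{n+1},\nu_i\ran\ge\ep(1+\ep^2)^{-1/2}$, and the second, $|(D^{s_i}\tilde{u}_{s_i})^T|_{s_i}^2\le(1-\ep^4)|D^{s_i}\tilde{u}_{s_i}|_{s_i}^2$, is equivalent to $\lan D^{s_i}\r_i,\nu_i\ran^2\ge\ep^4\big(1-\lan E_{n+1},\nu_i\ran^2\big)$, i.e. that the $\ep_i\Si$-part of $\nu_i$ has radial component (for the geodesic spheres about $p$) of relative size at least $\approx\ep^2$. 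Thus $E_i$ is the part of $\ep_iM$ projecting to the set of $x$ in the annulus $B^i_2\setminus B^i_\ep$ with $\tilde{u}_{s_i}(x)\in[\f1\de,\f1\de+\f2\ep]$ at which $\nu_i$ fails to be sufficiently horizontal or, being nearly horizontal, fails to be sufficiently tangent to the geodesic spheres. The strategy is to show that on the regular part of $T$ inside the corresponding region both failures are impossible with room to spare, so that for large $i$ the set $E_i$ is confined to a small neighborhood of the singular set $\mathcal S(T)$, whose $T$-area is negligible.

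The first condition is controlled by the cone structure of $T$ alone, together with the height of the slab. At $(\xi,t)\in T$ the unit tangent to the generating ray through $(\xi,t)$ is $e_0=(\r_\infty(\xi)^2+t^2)^{-1/2}\big(\r_\infty(\xi)\,\partial_{\r_\infty}+t\,E_{n+1}\big)\in T_{(\xi,t)}T$, so $\lan e_0,\nu_T\ran=0$; since on the region in question $t\ge\f1\de$ and $\r_\infty(\xi)\le2$, this gives $|\lan E_{n+1},\nu_T\ran|=\f{\r_\infty(\xi)}{t}\,|\lan\partial_{\r_\infty},\nu_T\ran|\le2\de$, whence $v_T\ge(2\de)^{-1}$ and $|Du_T|\ge\sqrt{(2\de)^{-2}-1}$ on $\mathrm{reg}\,T$. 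Choosing $\de_0<\f\ep2(1+\ep^2)^{-1/2}$ makes this lower bound exceed $\f1\ep$ for every $\de<\de_0$. The second condition is where the asymptotic verticality of $T$ at infinity enters: for $\de$ small, $T$ over this annulus and slab is $\ep''$-close to a vertical cone $\mathcal X\times\R$, $\mathcal X$ area minimizing in $\Si_\infty$, with $\ep''\to0$ as $\de\to0$; since the normal of a vertical cone $\mathcal X\times\R$ is orthogonal to the generating rays of $\mathcal X$, hence to $\partial_{\r_\infty}$, one gets $|\lan\partial_{\r_\infty},\nu_T\ran|\le\ep''$ on $\mathrm{reg}\,T$, and with $\ep''\ll\ep^2$ the second inequality fails on $\mathrm{reg}\,T$ as well. (This — not merely the cone property — is the role played by verticality.)

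Next I would transfer this to $\ep_iM$. As $\ep_iM$ and $T$ are area-minimizing boundaries, the convergence $\ep_iM\to T$ has multiplicity one, and away from the closed set $\mathcal S(T)$ it is of class $C^1$ — by Allard's regularity theorem, or, since $\ep_iM$ is a graph, by the scale-invariant Schauder estimates for the minimal graph equation in the $C^{1,\a}$-convergent ambient metrics $\ep_i(\si+dt^2)$; in particular $\nu_i\to\nu_T$ uniformly on compact subsets of $\mathrm{reg}\,T$. Because $T$ is a cone of locally finite mass and $\mathcal S(T)$ is closed with $H^n(\mathcal S(T))=0$, the $T$-area of $A_\rho\cap\big((\mathcal B_3\setminus\mathcal B_{\ep/2})\times[\f1\de-1,\f1\de+\f2\ep+1]\big)$ tends to $0$ as $\rho\to0$, $A_\rho$ being the $\rho$-neighborhood of $\mathcal S(T)$; fix a generic $\rho=\rho(\ep,\de)$ for which this area is $<\f12\ep^n$. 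For $i$ large, by the previous paragraph and the uniform convergence $\nu_i\to\nu_T$, neither defining inequality of $E_i$ holds on the part of $\ep_iM$ lying $C^1$-close to the compact set $(\mathrm{reg}\,T)\cap(\text{region})\setminus A_\rho$, so $E_i\subset\ep_iM\cap A_\rho\cap\big((B^i_2\setminus B^i_\ep)\times[\f1\de,\f1\de+\f2\ep]\big)$. Finally, no area of $\ep_iM$ escapes to $\mathcal S(T)$, by the Bishop-type bound $Vol(\ep_iM\cap\mathbb{B}^i_r)\le\f12|\S^n|r^n$ and the monotonicity formula; hence $\limsup_iH^n(E_i)\le H^n\big(T\cap\overline{A_\rho}\cap\overline{(\text{region})}\big)<\f12\ep^n$, which gives $H^n(E_i)<\ep^n$ for $i\ge i_0$.

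The main obstacle is making the last step uniform in $i$: the $C^1$-convergence of the normals off $\mathcal S(T)$ and the non-concentration of area near $\mathcal S(T)$ must be established in the degenerating family $\ep_iN=(\Si\times\R,\ep_i(\si+dt^2))$, which satisfies only $\overline{Ric}\ge0$ and not C3 unless $\Si$ is flat; one therefore leans on the already-established area bound $Vol(\ep_iM\cap\mathbb{B}^i_r)\le\f12|\S^n|r^n$, on the $C^{1,\a}$ (rather than $C^\infty$) convergence $\ep_i\si\to\si_\infty$ from Cheeger--Colding, and on the regularity theory for area-minimizing boundaries in $C^{1,\a}$ ambient metrics. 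A secondary, bookkeeping point is to fix the precise quantitative meaning of "$\ep''$-verticality of $T$ on the slab at height $\f1\de$" with $\ep''$ as small as desired once $\de$ is small — this is exactly where the special location of the slab (height $\asymp\f1\de\to\infty$, bounded radii) and the asymptotic verticality of $T$ do the work.
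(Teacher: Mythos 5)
Your reduction of the two defining conditions of $E_i$ in \eqref{EiTi} to statements about the unit normal is fine, and your treatment of the \emph{first} condition is correct and even cleaner than the paper's: since the ambient radial direction $\p_{\bar\rho_\infty}$ is exactly tangent to the cone $T$, at any regular point of $T$ in the slab one gets $|\lan E_{n+1},\nu_T\ran|\le \rho_\infty/t\le 2\de$, hence $|Du_T|$ large. But the \emph{second} condition is where your argument has a genuine gap. The cone property gives only the linear relation $\lan\p_{\rho_\infty},\nu_T\ran=-(t/\rho_\infty)\lan E_{n+1},\nu_T\ran$, which yields no smallness of the radial horizontal component; for that you need asymptotic verticality of $T$ along the vertical ray, and your key assertion --- that for small $\de$ one has the pointwise bound $|\lan\p_{\rho_\infty},\nu_T\ran|\le\ep''$ on \emph{all} of $\mathrm{reg}\,T$ in the slab because $T$ is ``$\ep''$-close to a vertical cone $\mathcal X\times\R$'' --- is exactly what is not established. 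What the paper proves at this point is only Gromov--Hausdorff closeness of the slab piece of $T$ to $\big(\p P\cap(\mathcal B_2\setminus\mathcal B_\ep)\big)\times[0,\f2\ep]$ (equation \eqref{TB2Bepde}), where $P=\{\mathfrak u=+\infty\}$; $\p P$ carries no regularity or minimality, and Hausdorff closeness alone does not control normals. Moreover, excising a neighborhood of $\mathcal S(T)$ does not repair this: $T$ can be perfectly smooth in the slab and still fail to be nearly vertical there; the exceptional set for verticality is governed by the blow-up of $T$ along the vertical axis (its tangent cylinder at $(o,1)$ and the rate of convergence to it), not by $\mathcal S(T)$. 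Making your route rigorous would require analyzing that tangent cylinder, invoking Allard/De Giorgi regularity in the $C^{1,\a}$ ambient metric near its regular points, and then controlling the area of $T$ and of $\ep_iM$ near its singular part --- substantial work that your write-up treats as immediate.

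The paper avoids any pointwise normal estimate on $T$ altogether. It uses \eqref{TB2Bepde} only to control the measure of the subset of $T_i$ where the gradient is not large (equation \eqref{HTiDuC}), and it handles the radial component in integrated form directly on $\ep_iM$: testing the varifold convergence $\ep_iM\rightharpoonup T$ against the radial one-form $\bar\omega^*$, which is annihilated by $\nu_T$ because the radial field is tangent to the cone, gives \eqref{epiMUpK}, and this is converted into the slice-wise bound \eqref{1de2epKs} of order $C\de/\ep$ for
\begin{equation}\nonumber
\int\f{\lan D^{s_i}\rho_i,D^{s_i}\tilde u_{s_i}\ran_{s_i}}{\sqrt{1+|D^{s_i}\tilde u_{s_i}|_{s_i}^2}}.
\end{equation}
On the complement of the set controlled by \eqref{HTiDuC}, membership in $E_i$ forces this integrand to be bounded below by a fixed multiple of a power of $\ep$, so \eqref{1de2epKs} bounds $H^n(E_i)$ once $\de$ is small relative to $\ep$. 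If you want to salvage your approach, you must either prove the quantitative verticality of $T$ in the slab (tangent-cylinder analysis plus Allard, with a measure estimate near the cylinder's singular set), or replace the pointwise claim by an integral estimate of the paper's type.
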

\begin{proof}
Let $U_j$ be the subgraph of $\tilde{u}_{s_i}$ in $\ep_i\Si\times\R$ defined by
$$\{(x,t)\in\ep_i\Si\times\R|\ t<\tilde{u}_{s_i}(x)\}.$$
By Rellich theorem, for any compact $K\subset\Si_\infty$ there is a subsequence of the
characteristic functions $\chi_{_{U_j}}$ converging to
$\chi_{_{\widetilde{U}}}$ in $L^1(K)$ up to a diffeomorphism (see Proposition 16.5 in
\cite{Gi} for the Euclidean case). Clearly, $\widetilde{U}$ can be represented as a subgraph of some generalized function $\mathfrak{u}$ (possibly equal to $\pm\infty$ somewhere) in $\Si_\infty\times\R$, namely,
$$\widetilde{U}=\{(x,t)\in\Si_\infty\times\R|\ t<\mathfrak{u}(x)\}.$$
Note $T=\p\widetilde{U}$.
$\mathfrak{u}(x)$ is a homogeneous function of degree 1 in $\Si_\infty\setminus\{o\}$ as $T$ is a cone through $o$.

If $\sup_\Si|Du|=\infty$, $T$ contains a half line $\{o\}\times(0,\infty)$ or $\{o\}\times(-\infty,0)$. Without loss generality, we assume $\{o\}\times(0,\infty)\subset T$.
Now we define a set $P$ by $\{x\in\Si_\infty|\ \mathfrak{u}(x)=+\infty\}$.
Since $T$ is a cone through the point $o$, then $P$ is also a cone through $o$ in $\Si_\infty$. In fact, for any $x\in P$ we have $\mathfrak{u}(x)=+\infty$. In particular, there is $(r,\th)\in\R^+\times_{\r}X=CX$ so that $x=(r,\th)$. Then $\mathfrak{u}(tx)=+\infty$ for $tx=(tr,\th)$, which means that $P$ is a cone.

For any $z\in
T\bigcap\left((\mathcal{B}_2\setminus\mathcal{B}_{\ep})\times\{s\}\right)$,
the slope of the line connecting $z$ and $o$ becomes larger and larger as $s$ increases to infinity. Hence for any $0<\ep<1$ we have
\begin{equation}\aligned\label{TB2Bepde}
&\lim_{\de\rightarrow0}d_{GH}\left(T\bigcap\left((\mathcal{B}_2\setminus\mathcal{B}_{\ep})\times\left[\f1\de,\f1\de+\f2\ep\right]\right),
\Big(\p P\cap(\mathcal{B}_2\setminus\mathcal{B}_\ep)\Big)\times\left[0,\f2\ep\right]\right)=0.
\endaligned
\end{equation}
Combining \eqref{2nfDsbq}-\eqref{mean_q} and that $M$ is area-minimizing, it is clear that there is a constant $\varepsilon_0>0$ depending only on $\Si$ so that
$$\varepsilon_0r^n\le\int_{M\cap \mathbb{B}_r(z)}1\le\f12Vol(\p\mathbb{B}_r(z))\le\f{|\S^n|}2r^n\qquad \mathrm{for\ any}\ r>0,\ z\in M.$$
Since $\ep_iM\rightharpoonup T$, for any $r>0$ and $y\in T$ we have
$$\varepsilon_0r^n\le\int_{T\cap \mathcal{B}_r(y)}1\le\f{|\S^n|}2r^n.$$
Due to \eqref{TB2Bepde} and $\{o\}\times(0,\infty)\subset T$, it is not hard to see that
\begin{equation}\aligned\label{Hn-2pP}
H^{n-1}\Big(\p P\cap(\mathcal{B}_2\setminus\mathcal{B}_\ep)\Big)>0\qquad \mathrm{and}\qquad H^{n-2}\Big(\p P\cap\p\mathcal{B}_1\Big)>0
\endaligned
\end{equation}
as $P$ is a cone through $o$.

Let $\Phi_i:\ \overline{\mathcal{B}}_{\f 2\de}\setminus \mathcal{B}_{\ep}\rightarrow\Phi(\overline{\mathcal{B}}_{\f 2\de}\setminus \mathcal{B}_{\ep})\subset\ep_i\Si$ be a diffeomorphism such that $\Phi_i^*(\ep_i \si)$ converges as $i\rightarrow\infty$ to $\si_{\infty}$ in the $C^{1,\a}$-topology on $\overline{\mathcal{B}}_{\f2\de}\setminus \mathcal{B}_{\ep}$, and $\Upsilon_i(x,t)=(\Phi_i(x),t)$ for any $x\in\overline{\mathcal{B}}_{\f 2\de}\setminus \mathcal{B}_{\ep}$. Note that $\Phi_i$ and $\Upsilon_i$ depend on $\de$.
Obviously, $\lim_{i\rightarrow\infty}\r_i\circ\Phi_i=\r_\infty$ in $\overline{\mathcal{B}}_{\f 2\de}\setminus \mathcal{B}_{\ep}$ and $\lim_{i\rightarrow\infty}\bar{\r}_i\circ\Upsilon_i=\bar{\r}_\infty$ in $\left(\mathcal{B}_{\f 2\de}\setminus \mathcal{B}_{\ep}\right)\times\left(-\f2\de,\f2\de\right)$.
Since $\ep_iM\bigcap\left(\mathbb{B}^i_{\f 4\de}\setminus
  \mathbb{B}^i_{\ep}\right)$ converges to
$T\bigcap\left(\widetilde{\mathcal{B}}_{\f 4\de}\setminus
  \widetilde{\mathcal{B}}_{\ep}\right)$ in the varifold sense, for any compact set $\widetilde{K}\in \widetilde{\mathcal{B}}_{\f 4\de}\setminus \widetilde{\mathcal{B}}_{\ep}$ we have
\begin{equation}\aligned\label{epiMUpK}
0=\lim_{i\rightarrow\infty}\left(\ep_iM\llcorner\Upsilon_i(\widetilde{K})\right)(\bar{\omega}^*\circ\Upsilon_i^{-1})
=\lim_{i\rightarrow\infty}\int_{\ep_iM\cap\Upsilon_i(\widetilde{K})}\lan\bar{\omega}^*\circ\Upsilon_i^{-1},\nu_i\ran d\mu_i,
\endaligned
\end{equation}
where $\bar{\omega}^*$ is the dual form of $\f{\p}{\p\bar{\r}_\infty}$ in $TN_\infty$.
For any compact set $K\subset (\mathcal{B}_2\setminus\mathcal{B}_{\ep})\times\left[\f1\de,\f1\de+\f2\ep\right]$, we let $K_s=\{x|\ (x,s)\in K\}$ be a slice of $K$ for $s\in\left[\f1\de,\f1\de+\f2\ep\right]$.

Since $\ep_iM\rightharpoonup T$, \eqref{TB2Bepde} implies
\begin{equation}\aligned\label{HTiDuC}
\lim_{i\rightarrow\infty}H^n\left(\left\{(x,t)\in T_i\Big|\ |D^{s_i}\tilde{u}_{s_i}(x)|_{s_i}\le \f1{\de}\right\}\right)=0.
\endaligned
\end{equation}
Thus from \eqref{epiMUpK} we obtain
\begin{equation}\aligned
\limsup_{i\rightarrow\infty}\left|\int_{\f1\de}^{\f1\de+\f2\ep}\left(\int_{\ep_iM\cap\Phi_i(K_s)}\left\lan\omega^*\circ\Phi_i^{-1},
\f{D^{s_i}\tilde{u}_{s_i}}{\sqrt{1+|D^{s_i}\tilde{u}_{s_i}|_{s_i}^2}}\right\ran_{s_i}\right) ds\right|\le C\f{\de}{\ep}
\endaligned
\end{equation}
for some constant $C$, where $\omega^*$ is the dual form of $\f{\p}{\p\r_\infty}$ in $T\Si_\infty$. Note that $D^{s_i}\r_i\rightarrow\f{\p}{\p\r_\infty}$ as $i\rightarrow\infty$ on any compact set in $\Si_\infty\setminus\{o\}$ up to a diffeomorphism $\Phi_i$, then it follows that
\begin{equation}\aligned\label{1de2epKs}
\limsup_{i\rightarrow\infty}\left|\int_{\f1\de}^{\f1\de+\f2\ep}\left(\int_{\ep_iM\cap\Phi_i(K_s)}
\f{\left\lan D^{s_i}\r_i,D^{s_i}\tilde{u}_{s_i}\right\ran_{s_i}}{\sqrt{1+|D^{s_i}\tilde{u}_{s_i}|_{s_i}^2}}\right) ds\right|\le C\f{\de}{\ep}
\endaligned
\end{equation}
Together with \eqref{HTiDuC} and \eqref{1de2epKs}, we complete the proof.
\end{proof}

Since $M$ is a minimal graph in $\Si\times\R$,  it is stable, and $\ep_iM$ is also a stable minimal hypersurface in $\ep_i(\Si\times\R)$. Let $B^i$ be the second fundamental form of $\ep_iM$ in $\ep_iN$ and $Ric_{\ep_iN}$ be Ricci curvature of $\ep_iN$. For any Lipschitz function $\varphi$ with compact support in $\ep_iM$ we have
\begin{equation}\aligned\label{stableM}
\int_{\ep_iM}\left(|B^i|^2+Ric_{\ep_iN}(\nu_i,\nu_i)\right)\phi^2\le\int_{\ep_iM}|\na^i\phi|^2,
\endaligned
\end{equation}
where $\na^i$ is the Levi-Civita connection of $\ep_iM$.

Now we suppose that there exists sufficiently large $r_0>0$ such that
the non-radial Ricci curvature of $\Si$ satisfies
\begin{equation}\aligned\label{Riccond}
\inf_{\p B_r}Ric_{\Si}\left(\xi^T,\xi^T\right)\ge\f{\k}{r^2}>0
\endaligned
\end{equation}
for all $r\ge r_0$ and $n\ge3$, where $\xi$ is a local vector field on $\Si$ with $\xi^T=\xi-\left\lan \xi,D\r\right\ran D\r$ and $|\xi^T|=1$. The definition of $D\r$ is as before.
\begin{lemma}\label{kn-3}
If $\sup_\Si|Du|=\infty$, then $\k$ in \eqref{Riccond} satisfies $\k\le\f{(n-3)^2}4.$
\end{lemma}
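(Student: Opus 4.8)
The plan is to test the stability inequality \eqref{stableM} for $\ep_iM$ against a function adapted to the vertical annular region containing $T_i$, exploiting the fact that off the bad set $E_i$ -- which has $H^n(E_i)<\ep^n$ by Lemma \ref{HnEiepn} -- the unit normal $\nu_i$ is nearly horizontal and its horizontal part is nearly tangential to the geodesic spheres of $\ep_i\Si$, so that the non-radial lower bound \eqref{Riccond} forces $Ric_{\ep_iN}(\nu_i,\nu_i)$ to be large. We may assume $\k>0$. Put $G_i=T_i\setminus E_i$; on $G_i$ let $\hat e=D^{s_i}\tilde u_{s_i}/|D^{s_i}\tilde u_{s_i}|_{s_i}$ and $a=\lan\hat e,D^{s_i}\r_i\ran_{s_i}$, so that $\hat e=\hat e^T+aD^{s_i}\r_i$ with $|a|<\ep^2$ (indeed off $E_i$ one has $|\hat e^T|_{s_i}^2>1-\ep^4$ by \eqref{EiTi}). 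Since the $\Si$-part of $\nu_i$ is $-D^{s_i}\tilde u_{s_i}/\sqrt{1+|D^{s_i}\tilde u_{s_i}|_{s_i}^2}$ by \eqref{No} and the $\R$-factor is flat,
\[
Ric_{\ep_iN}(\nu_i,\nu_i)=\f{|D^{s_i}\tilde u_{s_i}|_{s_i}^2}{1+|D^{s_i}\tilde u_{s_i}|_{s_i}^2}\,Ric_{\ep_i\Si}(\hat e,\hat e),
\]
while $|D^{s_i}\tilde u_{s_i}|_{s_i}>1/\ep$ on $G_i$. Expanding $Ric_{\ep_i\Si}(\hat e,\hat e)=Ric(\hat e^T,\hat e^T)+2a\,Ric(\hat e^T,D^{s_i}\r_i)+a^2Ric(D^{s_i}\r_i,D^{s_i}\r_i)$, the radial term is nonnegative by C1), the cross term is controlled by Cauchy--Schwarz for the positive semidefinite form $Ric$ together with the scale-invariant bound $|R_{\ep_i\Si}|\le c\,\r_i^{-2}$ from C3), and $Ric(\hat e^T,\hat e^T)\ge\k\,\r_i^{-2}|\hat e^T|_{s_i}^2$ by \eqref{Riccond} (all valid on $\{\ep/2\le\r_i\le2\}$ for $i$ large). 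Combining, there is $\theta(\ep)\to0$ with $Ric_{\ep_iN}(\nu_i,\nu_i)\ge(1-\theta(\ep))\,\k\,\r_i^{-2}$ on $G_i$.

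Next take $\phi_i(x,t)=\psi(\r_i(x))\,\chi(t)$, where $\chi$ is a Lipschitz cut-off supported in $[\f{1}{\de},\f{1}{\de}+\f{2}{\ep}]$, equal to $1$ on the middle half of that interval, with $|\chi'|\le2\ep$, and $\psi$ is the Lipschitz function supported in $[\ep/2,2]$ that equals $r^{-(n-3)/2}$ on $[\ep,1]$ and is linear on $[\ep/2,\ep]$ and on $[1,2]$. Then $\phi_i$ has compact support in $\ep_iM\cap\big((B^i_2\setminus B^i_{\ep/2})\times[\f{1}{\de},\f{1}{\de}+\f{2}{\ep}]\big)$, and since $|D^{s_i}\r_i|_{s_i}=1$ one has $|\na^i\phi_i|^2\le\psi'(\r_i)^2\chi^2+\psi(\r_i)^2\chi'^2$. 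Inserting $\phi_i$ in \eqref{stableM} and discarding $|B^i|^2\ge0$ gives
\[
\int_{G_i}\f{(1-\theta(\ep))\,\k}{\r_i^2}\,\phi_i^2\le\int_{\ep_iM}\big(\psi'(\r_i)^2\chi^2+\psi(\r_i)^2\chi'^2\big)+\int_{\ep_iM\setminus G_i}\big|Ric_{\ep_iN}(\nu_i,\nu_i)\big|\,\phi_i^2.
\]
On $\mathrm{supp}\,\phi_i$ one has $\r_i\ge\ep/2$, hence $|Ric_{\ep_iN}|\le C(n)c\,\r_i^{-2}$; with $\sup\phi_i^2\le\ep^{-(n-3)}$ and $H^n(E_i)<\ep^n$, the part of the last integral over $E_i$ is $\le C(n)c\,\ep$, and the part over $(\mathrm{supp}\,\phi_i)\setminus T_i\subset\{\ep/2\le\r_i<\ep\}$ contributes, after dividing by $\int\chi^2$, an amount bounded by $C(n)c$ independently of $\ep$ (a one-line estimate with the chosen $\psi$); the same bounds let us replace $G_i$ by $T_i$ on the left with error $C\ep$.

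Now let $i\to\infty$ along the subsequence: $\ep_iM\rightharpoonup T$ as varifolds away from the vertices, so all three integrals converge to the corresponding integrals over $T$; then let $\de\to0$, so by \eqref{TB2Bepde} the part of $T$ in this region converges to $\big(\partial P\cap(\mathcal{B}_2\setminus\mathcal{B}_\ep)\big)\times[0,\f{2}{\ep}]$. On this product $\r_\infty$ is independent of the $\R$-factor and $|\na\r_\infty|=1$ on the regular part, and the $(n-1)$-dimensional area of the cone $\partial P$ over the link $\partial P\cap\partial\mathcal{B}_1$ equals $r^{n-2}\,dr$ times the $(n-2)$-measure on the link. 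Writing $\mathcal{L}:=H^{n-2}(\partial P\cap\partial\mathcal{B}_1)>0$ by \eqref{Hn-2pP}, dividing through by $\mathcal{L}\int\chi^2\ge\mathcal{L}/\ep$ and using $\int\chi'^2/\int\chi^2\le4\ep^2$, the inequality reduces to
\[
(1-\theta(\ep))\,\k\int_{\ep}^{2}\f{\psi^2}{r^2}\,r^{n-2}\,dr\le\int_{\ep/2}^{2}\psi'^2\,r^{n-2}\,dr+C,
\]
with $C=C(n,c)$. For the chosen $\psi$ one computes $\int_{\ep}^{1}\f{\psi^2}{r^2}r^{n-2}dr=\int_{\ep}^{1}r^{-1}dr=\log\f{1}{\ep}$ and $\int_{\ep}^{1}\psi'^2 r^{n-2}dr=\f{(n-3)^2}{4}\log\f{1}{\ep}$, while the cut-off pieces over $[\ep/2,\ep]\cup[1,2]$ are $O(1)$. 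Dividing by $\log\f{1}{\ep}$ and letting $\ep\to0$ gives $\k\le\f{(n-3)^2}{4}$.

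The main obstacle is the Ricci lower bound on the good set: C1) ($Ric\ge0$) is needed both to discard the radial Ricci term and, through Cauchy--Schwarz against the scale-invariant bound C3), to absorb the cross term, whose negligibility rests on the quantitative estimate $|a|<\ep^2$ built into \eqref{EiTi}. The other delicate point is the careful bookkeeping of the two successive limits $i\to\infty$ and $\de\to0$ together with the varifold convergence $\ep_iM\rightharpoonup T$, so that the left-hand side truly passes to a Hardy-type integral on the $(n-1)$-dimensional cone $\partial P$ -- precisely where the exponent $-(n-3)/2$, hence the sharp constant $\f{(n-3)^2}{4}$, enters; this part follows the scheme of \cite{DJX}.
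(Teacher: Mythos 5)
Your proposal is correct and follows essentially the same route as the paper: rescale, bound $Ric_{\ep_iN}(\nu_i,\nu_i)$ from below off the bad set $E_i$ via \eqref{Riccond} with Lemma \ref{HnEiepn} controlling $H^n(E_i)$, plug a product test function (radial profile times a vertical cutoff on $[\f1\de,\f1\de+\f2\ep]$) into the stability inequality \eqref{stableM}, pass to the limits $i\rightarrow\infty$ and then $\de\rightarrow0$ using \eqref{TB2Bepde} and \eqref{Hn-2pP}, and conclude by a sharp Hardy-type computation on the cone $\p P$. The only difference is cosmetic: you use the pure power $\r^{-(n-3)/2}$ with linear cutoffs and divide by $\log\f1\ep$, whereas the paper uses the modulated profile $\r^{(3-n)/2}\sin\left(\pi\f{\log\r}{\log\ep}\right)$; both extract the same constant $\f{(n-3)^2}4$.
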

\begin{proof}
By re-scaling we get
$$\inf_{\p B_r^i}Ric_{\ep_i\Si}\left(\e^T,\e^T\right)\ge\f{\k}{r^2}>0$$
for all $r\ge \sqrt{\ep_i}r_0$, where $\e\in\G(T(\ep_iN))$, $\e^T=\e-\left\lan \e,D^{s_i}\r_i\right\ran_{s_i} D^{s_i}\r_i$ with $|\e^T|_{s_i}=1$.
Noting that conditions C1) and C3)  are both invariant under scaling, we obtain
\begin{equation}\aligned\label{RicepiNnu}
&Ric_{\ep_iN}(\nu_i,\nu_i)=\f1{1+|D^{s_i}\tilde{u}_{s_i}|_{s_i}^2}Ric_{\ep_i\Si}(D^{s_i}\tilde{u}_{s_i},D^{s_i}\tilde{u}_{s_i})\\
\ge& \f1{1+|D^{s_i}\tilde{u}_{s_i}|_{s_i}^2}\Big(Ric_{\ep_i\Si}\left((D^{s_i}\tilde{u}_{s_i})^T,(D^{s_i}\tilde{u}_{s_i})^T\right)\\
&\qquad\qquad\qquad\quad +2\left\lan D^{s_i}\tilde{u}_{s_i},D^{s_i}\r_i\right\ran_{s_i} Ric_{\ep_iN}\left((D^{s_i}\tilde{u}_{s_i})^T,D^{s_i}\r_i\right)\Big)\\
\ge& \f1{1+|D^{s_i}\tilde{u}_{s_i}|_{s_i}^2}\Big(\k'\left|(D^{s_i}\tilde{u}_{s_i})^T\right|_{s_i}^2
-c'\left|(D^{s_i}\tilde{u}_{s_i})^T\right|_{s_i}\left\lan D^{s_i}\tilde{u}_{s_i},D^{s_i}\r_i\right\ran_{s_i}\Big)\r_i^{-2}\\
\endaligned
\end{equation}
for some absolute constant $c'>0$.

Let $\e$ be the Lipschitz function on $\ep_i\Si$ defined by
$$\eta(x)=\big(\r_i(x)\big)^{\f{3-n}2}\sin\left(\pi\f{\log\r_i(x)}{\log\ep}\right)$$
in $B_1^i\setminus B_{\ep}^i$ and $\eta=0$ in other places.
Let $\tau$ be a Lipschitz function on $\R$ satisfying $\tau\equiv1$ on $\left[\f 1\de+1,\f 1\de+\f2\ep-1\right]$, $\tau(t)\equiv0$ for $t\in\left(-\infty,\f1\de\right]\bigcup[\f1\de+\f2\ep,\infty)$, and $|\tau'|\le1$.

For $z=(x,t)\in\Si\times\R$ set $\varphi(z)=\eta(x)\tau(t)$. Let
$\bn^i$ be the Levi-Civita connection of $\ep_iN$. Then
\begin{equation}\aligned\label{vetau}
&\int_{\ep_iM}Ric_{\ep_iN}(\nu_i,\nu_i)\varphi^2\le\int_{\ep_iM}|\overline{\na}^i\varphi|^2
=\int_{\ep_iM}\left(|D^{s_i}\eta|_{s_i}^2\tau^2+\eta^2|\tau'|^2\right)\\
\le&\int_{\f1{\de}}^{\f 1\de+\f2\ep}\left(\int_{\ep_iM\cap\ep_i\Si\times\{t\}}|D^{s_i}\eta|_{s_i}^2\right)dt+\left(\int_{\f 1\de}^{\f 1\de+1}+\int^{\f1\de+\f 2\ep}_{\f 1\de+\f2\ep-1}\right)\left(\int_{\ep_iM\cap\ep_i\Si\times\{t\}}\eta^2\right)dt\\
=&\int_{\f1{\de}}^{\f 1\de+\f2\ep}\left(\int_{\ep_iM\cap(B^i_1\setminus B^i_{\ep})\times\{t\}}\left(\f{3-n}2\sin\left(\pi\f{\log\r_i}{\log\ep}\right)+\f{\pi}{\log\ep}\cos\left(\pi\f{\log\r_i}{\log\ep}\right)\right)^2\r_i^{1-n}\right)dt\\
&+\left(\int_{\f 1\de}^{\f 1\de+1}+\int^{\f1\de+\f 2\ep}_{\f 1\de+\f2\ep-1}\right)\left(\int_{\ep_iM\cap(B^i_1\setminus B^i_{\ep})\times\{t\}}\sin^2\left(\pi\f{\log\r_i}{\log\ep}\right)\r_i^{3-n}\right)dt.
\endaligned
\end{equation}
Denote $E_i$ as \eqref{EiTi}. For any $z\in T_i\setminus E_i$ we have $|D^{s_i}\tilde{u}_{s_i}(z)|_{s_i}>\f1\ep$ and
$$\left|\lan D^{s_i}\tilde{u}_{s_i}(z), D^{s_i}\r_i\ran_{s_i}\right|\le\ep^2\left|D^{s_i}\tilde{u}_{s_i}(z)\right|_{s_i}.$$
Note $Ric_{\ep_iN}(\nu_i,\nu_i)=\left(1+|D^{s_i}\tilde{u}_{s_i}|_{s_i}^2\right)^{-1}Ric(D^{s_i}\tilde{u}_{s_i},D^{s_i}\tilde{u}_{s_i})$. Combining \eqref{RicepiNnu} we get
\begin{equation}\aligned
&\int_{\ep_iM}Ric_{\ep_iN}(\nu_i,\nu_i)\varphi^2\ge\int_{\ep_iM\cap\{\f1\de+1\le t\le\f 1\de+\f2\ep-1\}}Ric_{\ep_iN}(\nu_i,\nu_i)\eta^2\\
\ge&\int_{\f1{\de}+1}^{\f 1\de+\f2\ep-1}\bigg(\int_{(\ep_iM\setminus E_i)\cap(B^i_1\setminus B^i_{\ep})\times\{t\}}\f{1}{\r_i^2}\f{1}{1+|D^{s_i}\tilde{u}_{s_i}|_{s_i}^2}\Big(\k\left|(D^{s_i}\tilde{u}_{s_i})^T\right|_{s_i}^2\\
&-c'\left|(D^{s_i}\tilde{u}_{s_i})^T\right|_{s_i}\left\lan D^{s_i}\tilde{u}_{s_i},D^{s_i}\r_i\right\ran_{s_i}\Big)\sin^2\left(\pi\f{\log\r_i}{\log\ep}\right)\r_i^{3-n}\bigg)dt\\
\ge&\int_{\f1{\de}+1}^{\f 1\de+\f2\ep-1}\bigg(\int_{(\ep_iM\setminus E_i)\cap(B^i_1\setminus B^i_{\ep})\times\{t\}}\f{|D^{s_i}\tilde{u}_{s_i}|_{s_i}^2}{1+|D^{s_i}\tilde{u}_{s_i}|_{s_i}^2}\Big(\k(1-\ep^2)\\
&-c'\ep^2\Big)\sin^2\left(\pi\f{\log\r_i}{\log\ep}\right)\r_i^{1-n}\bigg)dt\\
\ge&\f{\k(1-\ep^2)-c'\ep^2}{1+\ep^2}\int_{\f1{\de}+1}^{\f 1\de+\f2\ep-1}\bigg(\int_{(\ep_iM\setminus E_i)\cap(B^i_1\setminus B^i_{\ep})\times\{t\}}\sin^2\left(\pi\f{\log\r_i}{\log\ep}\right)\r_i^{1-n}\bigg)dt.
\endaligned
\end{equation}
Due to $H^n(E_i)<\ep^n$ in Lemma \ref{HnEiepn}, it follows that
\begin{equation}\aligned
\int_{\ep_iM}Ric_{\ep_iN}(\nu_i,\nu_i)\varphi^2\ge&\f{\k-(\k+c')\ep^2}{1+\ep^2}\int_{\f1{\de}+1}^{\f 1\de+\f2\ep-1}\bigg(-\ep^{1-n}H^n(E_i)\\
&+\int_{\ep_iM\cap(B^i_1\setminus B^i_{\ep})\times\{t\}}\sin^2\left(\pi\f{\log\r_i}{\log\ep}\right)\r_i^{1-n}\bigg)dt\\
\ge\f{\k-(\k+c')\ep^2}{1+\ep^2}&\int_{\f1{\de}+1}^{\f 1\de+\f2\ep-1}\bigg(\int_{\ep_iM\cap(B^i_1\setminus B^i_{\ep})\times\{t\}}\sin^2\left(\pi\f{\log\r_i}{\log\ep}\right)\r_i^{1-n}\bigg)dt\\
-2\f{1-\ep}{1+\ep^2}(\k-&(\k+c')\ep^2).
\endaligned
\end{equation}
Combining this with \eqref{vetau} we let $i\rightarrow\infty$ and obtain
\begin{equation}\aligned
&\f{\k-(\k+c')\ep^2}{1+\ep^2}\int_{\f1{\de}+1}^{\f 1\de+\f2\ep-1}\bigg(\int_{CY\cap(\mathcal{B}_1\setminus \mathcal{B}_{\ep})\times\{t\}}\sin^2\left(\pi\f{\log\r_\infty}{\log\ep}\right)\r_\infty^{1-n}\bigg)dt\\
&\qquad-2\f{1-\ep}{1+\ep^2}(\k-(\k+c')\ep^2)\\
\le&\int_{\f1{\de}}^{\f 1\de+\f2\ep}\left(\int_{CY\cap(\mathcal{B}_1\setminus \mathcal{B}_{\ep})\times\{t\}}\left(\f{3-n}2\sin\left(\pi\f{\log\r_\infty}{\log\ep}\right)
+\f{\pi}{\log\ep}\cos\left(\pi\f{\log\r_\infty}{\log\ep}\right)\right)^2\r_\infty^{1-n}\right)dt\\
&+\left(\int_{\f 1\de}^{\f 1\de+1}+\int^{\f1\de+\f 2\ep}_{\f 1\de+\f2\ep-1}\right)\left(\int_{CY\cap(\mathcal{B}_1\setminus \mathcal{B}_{\ep})\times\{t\}}\sin^2\left(\pi\f{\log\r_\infty}{\log\ep}\right)\r_\infty^{3-n}\right)dt,
\endaligned
\end{equation}
where $\r_{\infty}$ is the distance function on $\Si_{\infty}=CX$ from
the fixed point $o$ to the considered point. Letting
$\de\rightarrow0$, and using \eqref{TB2Bepde} we get
\begin{equation}\aligned
&\f{\k-(\k+c')\ep^2}{1+\ep^2}\int_{1}^{\f2\ep-1}\bigg(\int_{\p P\cap(\mathcal{B}_1\setminus \mathcal{B}_{\ep})}\sin^2\left(\pi\f{\log\r_\infty}{\log\ep}\right)\r_\infty^{1-n}\bigg)dt\\
&\qquad-2\f{1-\ep}{1+\ep^2}(\k-(\k+c')\ep^2)\\
\le&\int_{0}^{\f2\ep}\left(\int_{\p P\cap(\mathcal{B}_1\setminus \mathcal{B}_{\ep})}\left(\f{3-n}2\sin\left(\pi\f{\log\r_\infty}{\log\ep}\right)
+\f{\pi}{\log\ep}\cos\left(\pi\f{\log\r_\infty}{\log\ep}\right)\right)^2\r_\infty^{1-n}\right)dt\\
&+\left(\int_{0}^{1}+\int^{\f 2\ep}_{\f2\ep-1}\right)\left(\int_{\p P\cap(\mathcal{B}_1\setminus \mathcal{B}_{\ep})}\sin^2\left(\pi\f{\log\r_\infty}{\log\ep}\right)\r_\infty^{3-n}\right)dt.
\endaligned
\end{equation}

We calculate
\begin{equation}\aligned
\int_{\p P\cap(\mathcal{B}_1\setminus \mathcal{B}_{\ep})}\sin^2\left(\pi\f{\log\r_\infty}{\log\ep}\right)\r_\infty^{1-n}=&H^{n-2}(\p P\cap\p \mathcal{B}_1)\int_{\ep}^1\sin^2\left(\pi\f{\log s}{\log\ep}\right)\f1sds\\
=&\left(\log\f1\ep\right) H^{n-2}(\p P\cap\p \mathcal{B}_1)\int_0^1\sin^2(\pi t)dt.
\endaligned
\end{equation}
Hence we have
\begin{equation}\aligned
&\f{\k-(\k+c')\ep^2}{1+\ep^2}\left(\left(\f2\ep-2\right)\left(\log\f1\ep\right) H^{n-2}(\p P\cap\p \mathcal{B}_1)\int_0^1\sin^2(\pi t)dt-2(1-\ep)\right)\\
\le&\f2\ep\int_{\p P\cap(\mathcal{B}_1\setminus \mathcal{B}_{\ep})}\left(\f{3-n}2\sin\left(\pi\f{\log\r_{\infty}}{\log\ep}\right)+\f{\pi}{\log\ep}\cos\left(\pi\f{\log\r_{\infty}}{\log\ep}\right)\right)^2
\r_{\infty}^{1-n}\\
&+2\int_{\p P\cap(\mathcal{B}_1\setminus \mathcal{B}_{\ep})}\sin^2\left(\pi\f{\log\r_{\infty}}{\log\ep}\right)\r_{\infty}^{3-n}\\
\le&\f2\ep H^{n-2}(\p P\cap\p \mathcal{B}_1)\int_\ep^1\left(\f{3-n}2\sin\left(\pi\f{\log s}{\log\ep}\right)+\f{\pi}{\log\ep}\cos\left(\pi\f{\log s}{\log\ep}\right)\right)^2
\f1sds\\
&+2H^{n-2}(\p P\cap\p \mathcal{B}_1)\int_\ep^1\sin^2\left(\pi\f{\log s}{\log\ep}\right)sds\\
\le&2\left(\log\f1\ep\right) H^{n-2}(\p P\cap\p \mathcal{B}_1)\int_0^1\left(\f1\ep\left(\f{3-n}2\sin(\pi t)+\f{\pi}{\log\ep}\cos(\pi t)\right)^2+\sin^2(\pi t)\right)dt\\
=&2\left(\log\f1\ep\right) H^{n-2}(\p P\cap\p \mathcal{B}_1)\left(\f1\ep\left(\f{(n-3)^2}4+\f{\pi^2}{(\log\ep)^2}\right)+1\right)\int_0^1\sin^2(\pi t)dt.
\endaligned
\end{equation}
Together with \eqref{Hn-2pP} the above inequality implies
$$\k\le\f{(n-3)^2}4+o(\ep),$$
where $\lim_{\ep\rightarrow\infty}o(\ep)=0.$ Therefore we complete the proof.
\end{proof}
Finally, combining Theorem \ref{SplitLiouville} we obtain a Liouville theorem for minimal graphic functions without growth condition.
\begin{theorem}\label{BMGP}
Let $(\Si,\si)$ be a  complete $n-$dimensional Riemannian manifold
satisfying conditions C1), C2), C3) and with its non-radial Ricci
curvature satisfying $\inf_{\p
  B_\r}Ric_{\Si}\left(\xi^T,\xi^T\right)\ge\k\r^{-2}$ for some
constant $\k$ for sufficiently large $\r>0$, where $\xi$ is a local vector field on $\Si$ with $|\xi^T|=1$ defined in \eqref{Riccond}. If $\k>\f{(n-3)^2}4$, then any entire solution to \eqref{u} on $\Si$ must be a constant.
\end{theorem}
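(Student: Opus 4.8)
The plan is to deduce Theorem \ref{BMGP} by a simple dichotomy on whether the gradient of the solution is globally bounded, routing the two cases into the two substantial results already proved: Theorem \ref{SplitLiouville} and Lemma \ref{kn-3}. So let $u$ be an entire solution to \eqref{u} on $\Si$, and consider the two possibilities $\sup_\Si|Du|<\infty$ and $\sup_\Si|Du|=\infty$.

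In the first case, $|Du|$ bounded gives, by integrating along minimal geodesics issuing from the fixed point $p$, the bound $|u(x)-u(p)|\le(\sup_\Si|Du|)\,\r(x)$; hence $u$ has at most linear growth, in particular at most linear growth on one side, which is exactly the hypothesis of Theorem \ref{SplitLiouville}. That theorem then forces $u$ to be constant \emph{unless} $\Si$ is isometric to Euclidean space. To rule out the exceptional case I would use the non-radial Ricci assumption directly: on $\R^n$ one has $Ric\equiv0$, so $\inf_{\p B_\r}Ric_\Si(\xi^T,\xi^T)=0$ for every $\r$, whereas the hypothesis $\inf_{\p B_\r}Ric_\Si(\xi^T,\xi^T)\ge\k\r^{-2}$ with $\k>\f{(n-3)^2}4$ demands this infimum be strictly positive for all sufficiently large $\r$. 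Since $\k>\f{(n-3)^2}4\ge0$, and in fact $\k>0$ in every dimension $n\ge3$ (when $n=3$ the threshold is $0$, so still $\k>0$), $\Si$ cannot be $\R^n$, and therefore $u$ is constant.

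In the second case, $\sup_\Si|Du|=\infty$ is precisely the hypothesis of Lemma \ref{kn-3}, whose conclusion is $\k\le\f{(n-3)^2}4$. This directly contradicts the standing assumption $\k>\f{(n-3)^2}4$, so the second case is vacuous. Combining the two cases, $u$ must be constant, which is the assertion of the theorem.

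The analytic substance is entirely absorbed into the two cited statements: the tangent-cone splitting argument underlying Theorem \ref{SplitLiouville}, and the stability inequality together with the $H^n$-bound on the 'bad' set $E_i$ from Lemma \ref{HnEiepn} underlying Lemma \ref{kn-3}. The only genuinely new ingredient in assembling the theorem is the elementary exclusion of the Euclidean case carried out above, and the one routine point to verify carefully is that the boundedness of $|Du|$ really does supply the ``at most linear growth on one side'' hypothesis of Theorem \ref{SplitLiouville}, which is immediate from the gradient bound. The sharpness of the constant $\f{(n-3)^2}4$ is not part of this proof and is handled by the explicit construction in the final section.
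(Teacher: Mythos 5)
Your proposal is correct and is essentially the argument the paper intends: Lemma \ref{kn-3} (in contrapositive form) rules out $\sup_\Si|Du|=\infty$ when $\k>\f{(n-3)^2}4$, the resulting gradient bound gives at most linear growth so Theorem \ref{SplitLiouville} applies, and the Euclidean alternative is excluded because $\R^n$ has vanishing Ricci curvature while the hypothesis forces $\k>0$. The paper compresses all of this into the single sentence preceding the theorem, so your write-up simply makes the same dichotomy explicit.
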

The number $\f{(n-3)^2}4$ in Theorem \ref{BMGP} is sharp, and we will construct examples to show this in the following section.

\section{Nontrivial entire minimal graphs in product manifolds}

Let $\Si$ be an Euclidean space $\R^{n+1}$ with a conformally flat  metric
$$ds_\phi^2=e^{\phi(r)}\sum_{i=1}^{n+1}dx_i^2,$$
where $r=|x|=\sqrt{x_1^2+\cdots+dx_{n+1}^2}$ and $\phi(|x|)$ is smooth in $\R^{n+1}$. Hence $\Si$ is a smooth manifold.
Set $\tilde{\phi}(r)=\int_0^re^{\f{\phi(r)}2}dr$. Let us define
$\r=\tilde{\phi}(r)$ and $\la(\r)=r\tilde{\phi}'(r)$, then the
Riemannian metric in $\Si$ can be written in  polar coordinates as
$$ds^2=d\r^2+\la^2(\r)d\th^2,$$
where $d\th^2$ is the  standard metric on $\S^{n}(1)$.
We assume $0\le\la'\le1$, $\la''\le0$,
\begin{equation}\aligned
\lim_{\r\rightarrow\infty}\f{\la(\r)}{\r}=\k,\qquad\lim_{\r\rightarrow\infty}\left(\r^2\f{1-(\la'(\r))^2}{\la^2(\r)}\right)=\f{1-\k^2}{\k^2},\qquad \lim_{\r\rightarrow\infty}\left(\r^2\f{\la''(\r)}{\la(\r)}\right)=0.
\endaligned
\end{equation}
From \cite{DJX}, there are examples satisfying the above conditions for every $\k\in(0,1]$.
Clearly, $\lim_{r\rightarrow\infty}\f1r\Si=CS_\k$ in the Gromov-Hausdorff measure, where $S_{\k}$ is an $n-$sphere in $\R^{n+1}$ with radius $0<\k\le1$, namely,
$$S_{\k}=\{(x_1,\cdots,x_{n+1})\in\R^{n+1}|\ x_1^2+\cdots+x_{n+1}^2=\k^2\}.$$
Moreover, let $\{e_\a\}_{\a=1}^{n}\bigcup\{\f{\p}{\p\r}\}$ be an orthonormal basis at the considered point of $\Si$. we calculate the sectional curvature and Ricci curvature of $\Si$ as follows (see Appendix A in \cite{Pl} for instance).
\begin{equation}\aligned\label{secRicSi}
K_{\Si}\left(\f{\p}{\p\r},e_\a\right)=-\f{\la''}{\la},\quad &K_{\Si}(e_\a,e_\be)=\f{1-(\la')^2}{\la^2},\\ Ric_{\Si}\left(\f{\p}{\p\r},e_\a\right)=0,\quad
&Ric_{\Si}\left(\f{\p}{\p\r},\f{\p}{\p\r}\right)=-n\f{\la''}{\la},\\
Ric_{\Si}(e_\a,e_\be)=\bigg((n-1)&\f{1-(\la')^2}{\la^2}-\f{\la''}{\la}\bigg)\de_{\a\be}.
\endaligned
\end{equation}
In particular, $Ric_\Si\ge0$ and $\lim_{\r\rightarrow\infty}\left(\r^2 Ric_{\Si}(e_\a,e_\be)\right)=\f{(n-2)^2}4\de_{\a\be}$ if $\k=\f{2}{n}\sqrt{n-1}$.

In theorem 3.4 of \cite{DJX}, we have showed that if $n\ge3$ and
$$\f{2}{n}\sqrt{n-1}\le\k<1,$$
then any hyperplane through the origin in $\Si$ is area-minimizing. Now we denote $T=\{(x_1,\cdots,x_{n+1})\in\R^{n+1}|\
x_{n+1}=0\}$ in $CS_{\k}$ or $\f1r\Si$ for $r>0$, and their induced metrics are determined by the ambient spaces. We will construct an entire minimal graph with non-constant graphic function in $\Si\times\R$ for every $\k\in[\f{2}{n}\sqrt{n-1},1)$, which obviously implies that the number $\f{(n-3)^2}4$ in Theorem \ref{BMGP} is sharp.

Let $D$ be the Levi-Civita connection of $\Si$. Let $\{E_i\}_{i=1}^{n+1}$ be the dual vectors of $\{dx_i\}_{i=1}^{n+1}$. Let $\G^k_{ij}$ be the Christoffel symbols of $\Si$ with respect to the frame $E_i$, i.e., $D_{E_i}E_j=\sum_k\G^k_{ij}E_k.$
Set $u^i=\si^{ij}u_j,\; |Du|^2=\si^{ij}u_iu_j,\;
D_iD_ju=u_{ij}-\G^k_{ij}u_k$ and $v=\sqrt{1+|Du|^2}$. We introduce an operator $\mathfrak{L}$ on a domain $\Om\subset\Si$ by
\begin{equation}\aligned\label{WL}
\mathfrak{L}F=\left(1+|DF|^2\right)^{\f32}\mathrm{div}_{\Si}\left(\f{D F}{\sqrt{1+|D F|^2}}\right)=\left(1+|D F|^2\right)\De_{\Si}F-F_{i,j}F^iF^j,
\endaligned
\end{equation}
where $F^i=\si^{ik}F_k$, and $F_{i,j}=F_{ij}-\G_{ij}^kF_k$ is the  covariant derivative.

Let $p=\f{n}2\k-\sqrt{\f{n^2\k^2}4-(n-1)}\ge\f1\k,$ then by Theorem 1.5 in \cite{Sp} there is a solution $u_j\in C^{\infty}(B_{j})$ to the Dirichlet problem
\begin{equation}\aligned\label{LD}
\left\{\begin{array}{cc}
     \mathfrak{L}u_j=0     & \quad\ \ \ {\rm{in}}  \  B_{j} \\ [3mm]
     u_j=c_jx_{n+1}r^{p-1}    & \quad\ \ \ {\rm{on}}  \  \p B_{j}
     \end{array}\right.,
\endaligned
\end{equation}
where $c_j$ is a positive constant and $r=\sqrt{x_1^2+\cdots+x_{n+1}^2}$. By symmetry,  $u_j(x',x_{n+1})+u_j(x',-x_{n+1})=0$ on $B_{j}$ with $x'=(x_1,\cdots,x_n)$. By \cite{DJX} and maximum principle, we have
\begin{equation}\aligned
|u_j|\ge c_j|x_{n+1}|r^{p-1} \ \ {\rm{in}}  \  B_{j}.
\endaligned
\end{equation}
If $w_j$ is a solution of \eqref{LD} with boundary $d_jx_{n+1}r^{p-1}$ and $0<d_j<c_j$, we have $|u_j|>|w_j|$ on $B_j\cap\{x_{n+1}\neq0\}$. By the uniqueness of the solution of \eqref{LD} there is a $c_j>0$ such that $\sup_{B_1}|Du_j|=1$.

Let $\G_j(s)=\{x\in B_j|\ u_j(x)=s\}$. We claim $u_j(x',t)\ge u_j(x',s)$ for all $(x',t),(x',s)\in B_j$ and $t>s$. If not, without loss of generality there are $t_1<t_2<t_3$ so that $u_j(x',t_1)=u_j(x',t_2)=\tau_j<u_j(x',t_3)$. It is not hard to see that there is a closed curve $\vartheta_{\tau_j}\subset\G_j(\tau_j)$ in the half plane $\{(s,x_{n+1})\in\R^2|\ s=|x'|\ge0\}$ such that $u_j(z)>\tau_j$ for every $z\in U_j$ and $U_j$ is a domain in $\{(s,x_{n+1})\in\R^2|\ s=|x'|\ge0\}$, which is enclosed by $\vartheta_{\tau_j}$. By the symmetry of $u_j$, rotating $\vartheta_{\tau_j}$ on $x'=(x_1,\cdots,x_n)$ generates an $n$-dimensional set $\widetilde{\G_j}(\tau_j)\subset\G_j(\tau_j)$, which encloses a domain $\widetilde{U}_j\subset B_j$. Then $u_j(x)>\tau_j$ for every $x\in \widetilde{U}_j$. But this is impossible as $\mathrm{graph}_{u_j}$ is area-minimizing. Hence $\f{\p u_j}{\p x_{n+1}}\ge0$.

There is a subsequence $\{j'\}$ of $\{j\}$ such that $u_{j'}$
converges to a function $u$ defined on $\Om\subset\Si$ by varifold
convergence, where $\mathrm{graph}_u$ is also area-minimizing. By the
symmetry of $u$, we deduce that $\Om$ is symmetric with respect to
$x_1,x_2,\cdots,x_n$, and $\Om$ is symmetric with respect to $x_{n+1}$. Clearly, $u$ is smooth, $\mathfrak{L}u=0$ and $\lim_{x\rightarrow\p\Om^\pm\setminus T}u(x)=\pm\infty$, where $\Om^{\pm}=\Om\cap\{\pm x_{n=1}>0\}$. In particular, $\Om^-=\{(x',-x_{n+1})|\ (x',x_{n+1})\in\Om^+\}$, $u(x',x_{n+1})+u(x',-x_{n+1})=0$, $Du(0)=0$ and $\sup_{B_1}|Du|=1$. Moreover, $u(x',t)\ge u(x',s)$ for all $(x',t),(x',s)\in \Om$ and $t>s$.

We want to show $\Om=\Si$. If not, $\p\Om\neq\emptyset$ and both of
$\p\Om^\pm$ are area-minimizing hypersurfaces in $\Si$. Since also
$\p\Om^+$ is symmetric with respect to $x_1,x_2,\cdots,x_n$, then
$\p\Om^+$ must be a graph on a domain of the half sphere, or else we
cannot have an  area-minimizing hypersurface $\p\Om^+$. If $w$ is the graphic function of $\p\Om^+$ on a closed half of unit sphere $\S^n$, then $w$ satisfies (see formula (2.6) in \cite{D1} for instance)
\begin{equation}\aligned\label{GraphS}
\De_{\S}w-2\f{\la'(w)}{\la(w)}|\na_{\S}w|^2-n\la(w)\la'(w)=0,
\endaligned
\end{equation}
where $\De_\S$ and $\na_\S$ are Laplacian and Levi-Civita connection
of $\S^n$, respectively. By the definition of $\la$, the equation \eqref{GraphS} has a unique
smooth solution for the Dirichlet problem with fixed boundary if $w>0$. So $\p\Om^+$ is a smooth hypersurface in $\Si$. $\f1r\p\Om^+$ is also an area-minimizing hypersurface in $\f1r\Si$, then $\lim_{r\rightarrow\infty}\f1r\p\Om^+$ will converge to an area-minimizing cone $CX$ over $X$ in $CS_\k$, where $X$ is contained in a closed half  sphere. Hence $X$ must be an equator and $CX$ is just a hyperplane $T$.

Let $S=\p\Om^+$. The second variation formula implies that there is a Jacobi field operator $L_S$ given by
\begin{equation}\aligned
L_Sh=\De_Sh+\left(|B|^2+Ric_{\Si}(\nu,\nu)\right)h,
\endaligned
\end{equation}
where $\De_S$, $B$ are the Laplacian and second fundamental form for $S$ relative to the metric $\si$, and $Ric_{\Si}$ is the Ricci curvature of $\Si$ relative to $\si$. Let $\S^{n-1}$ be an $(n-1)$-dimensional unit sphere in $\R^n$. Since $\f1tS$ converges to $T=\R_+\times_{\k\r}\S^{n-1}$ as $t\rightarrow\infty$, in terms of the coordinates $(\r,\a)\in(0,\infty)\times\S^{n-1}$ we consider
\begin{equation}\aligned\label{LTh}
L_Th=&\De_Th+\f{n-1}{\r^2}\left(\f1{\k^2}-1\right)h\\
=&\f1{\r^{n-1}}\f{\p}{\p \r}\left(\r^{n-1}\f{\p h}{\p \r}\right)+\f1{\k^2\r^2}\De_{\S^{n-1}}h+\f{n-1}{\r^2}\left(\f1{\k^2}-1\right)h\\
=&\f1{\r^2}\left(\r^2\f{\p^2}{\p \r^2}+(n-1)\r\f{\p}{\p \r}+\f1{\k^2}\De_{\S^{n-1}}+(n-1)\left(\f1{\k^2}-1\right)\right)h.
\endaligned
\end{equation}
The only positive solutions of $L_Th=0$ on $T$ are
\begin{equation}\aligned
h=\bar{c}_1\r^{-\la_-}+\bar{c}_2\r^{-\la_+},
\endaligned
\end{equation}
where $\bar{c}_1,\bar{c}_2$ are constants and $\la_{\pm}=\f{n-2}2\pm\sqrt{\f{n^2}4-\f{n-1}{\k^2}}>0$. By \cite{CHS,HS}, the equation $L_Tw=f_0$ with $|f_0|\le c\r^{-2-\la_--\de}$, $\de>0$ has a nonnegative solution
\begin{equation}\aligned\label{asymw}
w=(\bar{c}_3+\bar{c}_4\log \r)\r^{-\la_-}+O(\r^{-\la_--\de'})(\de'>0)\qquad \mathrm{as}\ \r\rightarrow\infty,
\endaligned
\end{equation}
where $\bar{c}_3,\bar{c}_4$ are constants, and $\bar{c}_4=0$ in case $\la_+>\la_-$, namely, $\k>\f2n\sqrt{n-1}$.

Now we see $S$ as a graph $\{(x',f(x'))|\ x'\in T\setminus K\}$ outside some bounded domain in $(\R^{n+1},ds^2_\phi)$, where $x'=(x_1,\cdots,x_n)$, and
$$f(x')=\int_0^{h(x')}e^{-\f12\phi(\sqrt{|x'|^2+s^2})}ds$$
with $|x'|^2=x_1^2+\cdots+x_n^2$.
Let $\g_{\r\a}$ be a unit normal smooth line in the upper plane of $\Si$ which corresponds to $\{(x',x_{n+1})\in\R^{n+1}|\ x_{n+1}\ge0,\ x'=x_\a\}\subset(\R^{n+1},ds^2_\phi)$ with $x_\a$ being $\r\a$ in the polar coordinate and $\g_{\r\a}(0)\in T$.
Now $S=\{\g_{\r\a}(w(\r\a))|\ \r\a\in T\setminus K\}$, where
$w(\r\a)=h(x')$ if $x'$ is $\r\a$ in the polar coordinate. We can
embed isometrically $\Si$ into $(n+2)$-dimensional Euclidean space,
then $S$ is an $n$-dimensional submanifold in $\R^{n+2}$ with mean
curvature decaying as $O(\f1r)$. Hence by the Allard regularity theorem (see
\cite{Al} or \cite{S}), for any $\ep>0$ there is a $\r_0>0$ such that
$$\r^{-1}|w(\r\a)|+|\na_Tw(\r\a)|\le \ep\qquad \mathrm{for\ every}\ \r\ge \r_0,\ \a\in\S^{n-1},$$
where $\na_{T}$ is the Levi-Civita connection of $T$ with induced metric in $CS_\k$.
Since $S$ is a graph with graphic function $f$ outside a compact set in $(\R^{n+1},ds_\phi^2)$, 
then the mean curvature is $H=\f n2\f{\phi'(r_f)}{r_f}X^N$ with $r_f=\sqrt{|x'|^2+f^2(x')}$, and $(\cdots)^N$ is the projection onto the normal bundle $NS$, namely, $f$ satisfies (after a simple computation)
\begin{equation}\aligned
\sum_{i,j=1}^ng^{ij}f_{ij}=\f n2\f{\phi'}{r_f}\left(-\sum_{i=1}^nx_if_i+f\right),
\endaligned
\end{equation}
where $g_{ij}=\de_{ij}+f_if_j$ and $(g^{ij})$ is the inverse matrix of $(g_{ij})$.
Then by the estimates \eqref{asymw} and the Schauder estimates we obtain
\begin{equation}\aligned
\tilde{r}^{-1}|f(x')|+|\na_{\R}f(x')|+\tilde{r}|\na_{\R}^2f(x')|\le C\tilde{r}^{-\ep}
\endaligned
\end{equation}
with $\tilde{r}=\sqrt{x_1^2+\cdots+x_n^2}$ and some $\ep,C>0$, where $\na_{\R}$ is the standard Levi-Civita connection of Euclidean space. 
By the definition of $w$ and applying a coordinate transformation,  we deduce
\begin{equation}\aligned\label{wDw}
\r^{-1}|w(\r\a)|+|\na_Tw(\r\a)|+\r|\na_T^2w(\r\a)|\le C'\r^{-\ep'}
\endaligned
\end{equation}
for some $\ep',C'>0$. In particular, $w$ satisfies \eqref{asymw}.

The $\mathrm{graph}_u$ has a unique tangent cone at infinity: a cylinder $T\times\R$. Let $\G_y=\G_y(u)=\{x\in\Si|\ u(x)=y\}$ for any $y\in\R$. Since $\lim_{x\rightarrow\p\Om^\pm\setminus T}u(x)=\pm\infty$ and $u(x',t)\ge u(x',s)$ for each $(x',t),(x',s)\in \Om$ and $t>s$, we deduce $\mathrm{dist}(\G_{y_2},0)\ge\mathrm{dist}(\G_{y_1},0)>0$ for $|y_2|\ge|y_1|$ and sufficiently large $|y_1|$. Moreover, for any $\ep>0$ there is $y_0=y_0(\ep)$ such that $\G_y$ is within $\ep$ of $S$ for any $y\ge y_0$. Now we use $\r\omega$ to represent the element of $\Si$ with metric $\si=d\r^2+\la^2(\r)d\th^2$, and claim that
\begin{equation}\aligned
|Du(\r\omega)|\rightarrow\infty  \qquad \mathrm{as}\qquad  |u(\r\omega)|+\r\rightarrow\infty,\  (\r,\omega)\in\left((0,\infty)\times_\la\S^n\right)\cap\Om.
\endaligned
\end{equation}
If we embed $\Si$ into $\R^{n+2}$ isometrically, then
$\mathrm{graph}_u$ is an $(n+1)$-dimensional submanifold in $\R^{n+3}$
with codimension 2. We check  that the Allard regularity theorem still works in our case. Invoking  elliptic
regularity theory, if the minimal hypersurfaces $M_k$ converge to a
cone $C$ in varifold sense, then the convergence is $C^2$ near regular
points of $C$. For any $\mu_k,\la_k\rightarrow\infty$ it is clear that
$\mathrm{graph}_{u-\mu_k}=\{(x,u(x)-\mu_k)|\ x\in\Si\}$ converges to
$S\times\R\subset\Si\times\R$ in the varifold sense and
$\f1{\la_k}\left(\mathrm{graph}_{u-\mu_k}\right)$  converges to
$T\times\R\subset CS_\k\times\R$ in the varifold sense.
So we can show the above claim by $C^2$ convergence (see also the proof of Theorem 4 in \cite{Si2}).

Denote $\widetilde{T}=T\times\R$. In terms of the coordinates
$(\r,\a,y)\in(0,\infty)\times\S^{n-1}\times\R$ we can write the operator $L_{\widetilde{T}}$ as
\begin{equation}\aligned
L_{\widetilde{T}}h=&\De_Th+\f{\p^2h}{\p y^2}+\f{n-1}{\r^2}\left(\f1{\k^2}-1\right)h\\
=&\f1{\r^{n-1}}\f{\p}{\p \r}\left(\r^{n-1}\f{\p h}{\p \r}\right)+\f{\p^2h}{\p y^2}+\f1{\k^2\r^2}\De_{\S^{n-1}}h+\f{n-1}{\r^2}\left(\f1{\k^2}-1\right)h.\\
\endaligned
\end{equation}
Let
$$v(\r,y)=\r^{\la_-}\int_{\S^{n-1}}h(\r\a,y)d\a\qquad \mathrm{and}\qquad f(\r,y)=\r^{\la_-}\int_{\S^{n-1}}\tilde{f}(\r\a,y)d\a$$
with $\la_{-}=\f{n-2}2-\sqrt{\f{n^2}4-\f{n-1}{\k^2}}>0$, then $L_{\widetilde{T}}=\tilde{f}$ implies that
\begin{equation}\aligned
\r^{-1-\be}\f{\p}{\p \r}\left(\r^{1+\be}\f{\p v}{\p \r}\right)+\f{\p^2v}{\p y^2}=f
\endaligned
\end{equation}
with $\be=2\sqrt{\f{n^2}4-\f{n-1}{\k^2}}$. The left of the above equation is a uniform elliptic operator for $\r\ge c$ with any positive constant $c$.

By the Allard regularity theorem, there are a constant $\r_1>0$ and a domain
$$G=\{(\r\a,y)|\ \r\ge \r_1,\ y\in\R,\ \r\a\in T\}\subset T\times\R=\widetilde{T}$$
such that $\mathrm{graph}_u$ can be written as a graph in $G$ with graphical function $W$ outside some compact set $\widetilde{K}$ in $\Si\times\R$. Namely, $\mathrm{graph}_u\setminus\widetilde{K}=\mathrm{graph}_W=\{(\g_{\r\a}(W(\r\a,y)),y)|\ \r\a\in T,\ \r\ge\r_1,\ y\in\R\}$ and $\g_{\r\a}$ is defined as before. Similar to \eqref{wDw} we have
\begin{equation}\aligned\label{uDu}
\r^{-1}|W(\r\a)|+|\na_{\widetilde{T}}W(\r\a)|+\r|\na_{\widetilde{T}}^2W(\r\a)|\le C\r^{-\de}
\endaligned
\end{equation}
for some $\de,C>0$, where $\na_{\widetilde{T}}$ is the Levi-Civita connection of $\widetilde{T}$ with induced metric in $CS_\k\times\R$. Then by Theorem 1 in \cite{Si2}, we obtain for any $\ep\in(0,1)$
\begin{equation}\aligned
|y|^\ep \r^{\la_-}\f{\p W}{\p y}(\r\a,y)\ge C_2 \quad \mathrm{for\ all}\ y\in\R,\ \a\in\S^{n-1},\ y\ge \r\ge C_1,
\endaligned
\end{equation}
where $C_1,C_2$ are constants independent of $y$ and $\r$. It is clear that
\begin{equation}\aligned
\f{\p W}{\p y}(\r\a,y)=\f1{|Du(\xi)|}
\endaligned
\end{equation}
where $(\r\a,y)\in G$, $y=u(\xi)$ and $\xi=\g_{\r\a}(W(\r\a,y))$. Fix $\r$, we have $|Du(\xi)|\le C_3|u(\xi)|^\ep$ with constant $C_3$ depending on $\r$. Hence $|D(u(\xi))^{1-\ep})|$ is bounded when $\xi$ approaches $S$ in $\g_{\r\a}$, which contradicts  $\lim_{x\rightarrow\p\Om^\pm\setminus T}u(x)=\pm\infty$. Therefore, we deduce $\Om=\Si$, namely, we get a smooth entire minimal graph $\{(x,u(x))|\ x\in\Si\}$ on $\Si$.

\begin{theorem}
Let $\Si$ be an $(n+1)$-dimensional Riemannian manifold described in the front of this section. If $n\ge3$ and
$\f{2}{n}\sqrt{n-1}\le\k<1,$ then there exists a smooth entire minimal graph $\{(x,u(x))|\ x\in\Si\}$ in $\Si\times\R$, where $u$ is not a constant. 
\end{theorem}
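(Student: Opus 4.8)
The plan is to obtain $u$ as a varifold limit of solutions of an exhausting family of Dirichlet problems. First I would use Spruck's interior gradient estimate (Theorem~1.5 of \cite{Sp}) to solve $\mathfrak{L}u_j=0$ on the ball $B_j$ with the odd boundary data $c_j x_{n+1}r^{p-1}$, where $p=\f n2\k-\sqrt{\f{n^2\k^2}4-(n-1)}\ge\f1\k$; the exponent $p$ is dictated by the indicial roots of the linearized operator at infinity and is real precisely because $\k\ge\f2n\sqrt{n-1}$. Since every graph over a bounded domain is area-minimizing (Lemma~2.1 of \cite{DJX}) and the solution of \eqref{LD} is unique, one can fix $c_j$ so that $\sup_{B_1}|Du_j|=1$, and then extract a subsequence converging in the varifold sense to an area-minimizing graph $u$ over a domain $\Om\subset\Si$, with $\sup_{B_1}|Du|=1$ (so $u$ is not constant) by the $C^2$ convergence near regular points. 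The symmetry of the data makes $u$ odd in $x_{n+1}$ and symmetric in $x_1,\dots,x_n$; the monotonicity $u(x',t)\ge u(x',s)$ for $t>s$ follows by excluding a closed level curve $\vartheta_{\tau_j}$, whose rotation about the $x'$-variables would bound a region $\widetilde U\subset B_j$ on which $u_j$ stays strictly above the value it takes on the enclosing $n$-dimensional level set, contradicting that $\mathrm{graph}_{u_j}$ is area-minimizing; and one gets $\lim_{x\to\p\Om^\pm\setminus T}u(x)=\pm\infty$ with $\Om^\pm=\Om\cap\{\pm x_{n+1}>0\}$.

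The crux is to show $\Om=\Si$. Suppose not, so $S\triangleq\p\Om^+$ is a nonempty area-minimizing hypersurface in $\Si$; being symmetric in $x_1,\dots,x_n$ it must be a radial graph over a domain of the closed half-sphere (via the unique solvability of \eqref{GraphS}). Then the blow-downs $\f1tS$ converge to an area-minimizing cone $CX$ over a set $X$ lying in a closed hemisphere, which forces $X$ to be the equator, i.e.\ $CX=T$, and hence $\f1t\,\mathrm{graph}_u\to\widetilde T\triangleq T\times\R$. I would then pass to the Jacobi operators: on $T$ the only positive solutions of $L_Th=0$ are the powers $\r^{-\la_\pm}$ with $\la_\pm=\f{n-2}2\pm\sqrt{\f{n^2}4-\f{n-1}{\k^2}}>0$ (see \eqref{LTh}), and the linear theory of \cite{CHS,HS} lets one write $S$ as a normal graph $w$ over $T$ with asymptotics $w=(\bar c_3+\bar c_4\log\r)\r^{-\la_-}+O(\r^{-\la_--\de'})$, the log term disappearing once $\la_+>\la_-$, that is, $\k>\f2n\sqrt{n-1}$. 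Allard's regularity theorem \cite{Al} together with Schauder estimates for the graph equation of the level sets of $u$ then upgrades this to the uniform decay $\r^{-1}|W|+|\na_{\widetilde T}W|+\r|\na_{\widetilde T}^2W|\le C\r^{-\de}$, where $\mathrm{graph}_u$ is written, outside a compact set, as a normal graph $W$ over a far region $G\subset\widetilde T$, as in \eqref{uDu}.

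Finally I would feed this into Theorem~1 of \cite{Si2}. The weighted angular average $v(\r,y)=\r^{\la_-}\int_{\S^{n-1}}W(\r\a,y)\,d\a$ satisfies a uniformly elliptic equation $\r^{-1-\be}\pd{}{\r}\big(\r^{1+\be}\pd{v}{\r}\big)+\ppd{v}{y}=f$ with $\be=2\sqrt{\f{n^2}4-\f{n-1}{\k^2}}$ and suitably decaying right-hand side, so Simon's quantitative growth estimate gives $|y|^\ep\r^{\la_-}\pd{W}{y}(\r\a,y)\ge C_2$ for all $y\ge\r\ge C_1$ and $\ep\in(0,1)$. Since $\pd{W}{y}(\r\a,y)=1/|Du(\xi)|$ whenever $y=u(\xi)$ with $\xi=\g_{\r\a}(W(\r\a,y))$, this forces $|Du(\xi)|\le C_3|u(\xi)|^\ep$ as $\xi$ approaches $S$ along $\g_{\r\a}$; hence $\big|D\big(u^{1-\ep}\big)\big|$ stays bounded near $S$, contradicting $\lim_{x\to\p\Om^\pm\setminus T}u(x)=\pm\infty$. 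Therefore $\Om=\Si$, and $\{(x,u(x)):x\in\Si\}$ is a nonconstant smooth entire minimal graph. I expect the main obstacle to be precisely this last stretch: setting up the right normal-graph coordinates over the cylinder $\widetilde T$, propagating the Allard--Schauder decay of $W$ down to the sharp rate governed by the indicial root $\la_-$, and matching it with Simon's growth estimate — and it is exactly here that the hypothesis $\k\ge\f2n\sqrt{n-1}$ enters, through the reality of $\la_\pm$, which also exhibits the sharpness of the constant $\f{(n-3)^2}4$ in Theorem~\ref{BMGP}.
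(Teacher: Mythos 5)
Your proposal is correct and follows essentially the same route as the paper: Spruck's Dirichlet solutions on $B_j$ with odd boundary data $c_jx_{n+1}r^{p-1}$, normalization $\sup_{B_1}|Du_j|=1$, the level-set/rotation argument for monotonicity, the varifold limit on a domain $\Om$, and the proof that $\Om=\Si$ via the blow-down to the cylinder $T\times\R$, the Jacobi-operator asymptotics of \cite{CHS,HS}, Allard--Schauder decay of the graph function $W$, and Simon's growth estimate yielding $|Du(\xi)|\le C_3|u(\xi)|^\ep$ and the contradiction with $u\to\pm\infty$ at $\p\Om^\pm$. No substantive divergence from the paper's argument.
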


\bigskip

\bigskip

\bibliographystyle{amsplain}

\end{document}